\DeclareMathOperator{\htt}{ht}
\theoremstyle{definition}
\newtheorem{thm}{Theorem}[section]
\newtheorem{cor}[thm]{Corollary}
\newtheorem{prop}[thm]{Proposition}
\newtheorem{lem}[thm]{Lemma}
\theoremstyle{Definition}
\newtheorem*{defn}{Definition}
\theoremstyle{definition}
\newtheorem*{rem}{Remark}
\newenvironment{pf}{\proof}{\endproof}
\newcounter{cnt}
\def\mydggeometry{\makeatletter\dg@YGRID=1\dg@XGRID=20\unitlength=0.003pt\makeatother}
\makeatother \theoremstyle{remark}
\numberwithin{equation}{section}
\let\bwdg\bigwedge
\def\bigwedge{{\textstyle\bwdg}}
\newcommand{\thmref}[1]{Theorem~\ref{#1}}
\newcommand{\wt}{\operatorname{wt}}
\newcommand{\nc}{\newcommand}
\newcommand{\rnc}{\renewcommand}
\nc{\cal}{\mathcal} \nc{\goth}{\mathfrak} \rnc{\bold}{\mathbf}
\newcommand{\supp}{\operatorname{supp}}
\newcommand{\chrmult}{\operatorname{chrmult}}
\nc\bomega{{\mbox{\boldmath $\omega$}}} \nc\bpsi{{\mbox{\boldmath $\Psi$}}}
 \nc\balpha{{\mbox{\boldmath $\alpha$}}}
 \nc\bpi{{\mbox{\boldmath $\pi$}}}
 \nc\bvpi{{\mbox{\boldmath $\varpi$}}}
\nc\chara{\operatorname{ch}}
  \nc\bxi{{\mbox{\boldmath $\xi$}}}
\nc\bmu{{\mbox{\boldmath $\mu$}}} \nc\bcN{{\mbox{\boldmath $\cal{N}$}}} \nc\bcm{{\mbox{\boldmath $\cal{M}$}}} \nc\blambda{{\mbox{\boldmath
$\lambda$}}}\nc\bnu{{\mbox{\boldmath $\nu$}}}
\newcommand{\lie}[1]{\mathfrak{#1}}
\def\section{\def\@secnumfont{\mdseries}\@startsection{section}{1}%
  \z@{.7\linespacing\@plus\linespacing}{.5\linespacing}%
  {\normalfont\scshape\centering}}
\def\subsection{\def\@secnumfont{\bfseries}\@startsection{subsection}{2}%
  {\parindent}{.5\linespacing\@plus.7\linespacing}{-.5em}%
  {\normalfont\bfseries}}
 \nc{\Hom}{\operatorname{Hom}}
  \nc{\mode}{\operatorname{mod}}
\nc{\End}{\operatorname{End}} \nc{\wh}[1]{\widehat{#1}} \nc{\Ext}{\operatorname{Ext}} \nc{\ch}{\text{ch}} \nc{\ev}{\operatorname{ev}}
\nc{\Ob}{\operatorname{Ob}} \nc{\soc}{\operatorname{soc}} \nc{\rad}{\operatorname{rad}} \nc{\head}{\operatorname{head}}
\def\mult{\operatorname{mult}}
\def\st{\operatorname{st}}
 \nc{\Cal}{\cal} \nc{\Xp}[1]{X^+(#1)} \nc{\Xm}[1]{X^-(#1)}
\nc{\on}{\operatorname} \nc{\Z}{{\bold Z}} \nc{\J}{{\cal J}}  \nc{\Q}{{\bold Q}}
\nc{\N}{{\bold N}}  \nc\boa{\bold a} \nc\bob{\bold b} \nc\boc{\bold c} \nc\bod{\bold d} \nc\boe{\bold e} \nc\bof{\bold f} \nc\bog{\bold g}
\nc\boh{\bold h} \nc\boi{\bold i} \nc\boj{\bold j} \nc\bok{\bold k} \nc\bol{\bold l} \nc\bom{\bold m} \nc\bon{\mathbb n} \nc\boo{\bold o}
\nc\bop{\bold p} \nc\boq{\bold q} \nc\bor{\bold r} \nc\bos{\bold s} \nc\boT{\bold t} \nc\boF{\bold F} \nc\bou{\bold u} \nc\bov{\bold v}
\nc\bow{\bold w} \nc\boz{\bold z}\nc\ba{\bold A} \nc\bb{\bold B} \nc\bc{\mathbb C} \nc\bd{\bold D} \nc\be{\bold E} \nc\bg{\bold
G} \nc\bh{\bold H} \nc\bi{\bold I} \nc\bj{\bold J} \nc\bk{\bold K} \nc\bl{\bold L} \nc\bm{\bold M} \nc\bn{\mathbb N} \nc\bo{\bold O} \nc\bp{\bold
P} \nc\bq{\bold Q} \nc\br{\bold R} \nc\bs{\bold S} \nc\bt{\bold T} \nc\bu{\bold U} \nc\bv{\bold V} \nc\bw{\bold W} \nc\bz{\mathbb Z} \nc\bx{\bold
x} \nc\KR{\bold{KR}} \nc\rk{\bold{rk}} \nc\het{\text{ht }}
\nc\toa{\tilde a} \nc\tob{\tilde b} \nc\toc{\tilde c} \nc\tod{\tilde d} \nc\toe{\tilde e} \nc\tof{\tilde f} \nc\tog{\tilde g} \nc\toh{\tilde h}
\nc\toi{\tilde i} \nc\toj{\tilde j} \nc\tok{\tilde k} \nc\tol{\tilde l} \nc\tom{\tilde m} \nc\ton{\tilde n} \nc\too{\tilde o} \nc\toq{\tilde q}
\nc\tor{\tilde r} \nc\tos{\tilde s} \nc\toT{\tilde t} \nc\tou{\tilde u} \nc\tov{\tilde v} \nc\tow{\tilde w} \nc\toz{\tilde z} \nc\woi{w_{\omega_i}}
\begin{document}
\setcounter{section}{0}
\setcounter{tocdepth}{1}


\title{Generalized Chromatic polynomials of graphs from heaps of pieces}

\author{G. Arunkumar}
\address{Indian Institute of Science Education and Research, Mohali, India.}
\email{arun.maths123@gmail.com \\ gakumar@iisermohali.ac.in.}

\thanks{Part of this work was done when the author was a graduate student at the Institute of Mathematical Sciences, Chennai, India; and the author acknowledges the partial support under the DST Swarnajayanti fellowship DST/SJF/MSA-02/2014-15 of Amritanshu Prasad. The author is grateful to Indian Institute of Science Education and Research, Mohali for the institute post doctoral fellowship.}

\subjclass [2010]{05C15, 05C31, 05E15, 17B01, 17B67}
\keywords{Acyclic orientations, Chromatic polynomials, Free partially commutative Lie algebras, Heaps of pieces, Lyndon heaps, Pyramid proportionality lemma, Reciprocity theorem}

\begin{abstract}
	Let $G$ be a simple graph and let $\mathcal{L}(G)$ be the free partially commutative Lie algebra associated to $G$. In this paper, using heaps of pieces, we prove an expression for the generalized $\bold k$-chromatic polynomial of $G$ in terms of dimensions of the grade spaces of $\mathcal{L}(G)$. This will give us a new interpretation for the chromatic polynomials in terms of multilinear heaps and Lyndon length. The classical results of Stanley, and Greene and Zaslavsky regarding the acyclic orientations of $G$ are obtained as corollaries.  A heap with a unique minimal piece is said to be a pyramid and our main theorem is proved using the properties of pyramids. For this reason, we prove two important properties of pyramids namely the pyramid proportionality lemma, and the pyramid and Lyndon heap lemma. In the last section, we will introduce the $(m,\lambda)$-labelling on acyclic orientations which is similar to Stanley's $\lambda$-compatible pairs. As an application of our main theorem, using this $(m,\lambda)$-labelled acyclic orientations, we will prove the reciprocity theorem for the derivatives of the chromatic polynomials.
\end{abstract}

\maketitle

\section{Introduction}\label{intro}
We denote the set of complex numbers by $\bc$ and, respectively, the set of integers, non-negative integers, and positive integers by $\bz$, $\bz_+$, and $\bn$.  Let $G$ be a finite simple graph with a totally ordered vertex set $I$ and edge set $E(G)$. Assume that $I = \{\alpha_1,\alpha_2,\dots,\alpha_n\}$ and $\bold k = (k_1,k_2,\dots,k_n) \in \mathbb{Z}^{n}_{+}$. When there is no confusion we will denote the vertex $\alpha_i$ simply by $i$. 
In this paper, we are interested in the generalized $\bold k$-chromatic polynomial of $G$ \cite{S98,HMK04,akv} which is defined as follows.  A vertex multicoloring of $G$ associated to $\bold k$ is an assignment of colors to the vertices of $G$ in which each vertex $\alpha_i\in I$ receives exactly $k_i$ colors such that the adjacent vertices receive disjoint colors. The generalized $\bold k$-chromatic polynomial ($\bold k$-chromatic polynomial in short)  counts the number of distinct proper vertex multicolorings of $G$ using $q$ colors and is denoted by $\pi^G_{\bold k}(q)$.  Note that, if $\bold k = \bold 1 := (1,1,\dots,1)$ then the multicoloring corresponds to the classical vertex coloring of $G$. There is a close relationship between the ordinary chromatic polynomials and the $\bold k$-chromatic polynomials. We have
\begin{equation}\label{connection}
	\pi_\bold k^G(q)=\frac{1}{\bold k!}\pi_{\bold 1}^{G(\bold k)}(q)
\end{equation}
 where $\pi_{\bold 1}^{G(\bold k)}(q)  \text{ is the chromatic polynomial of the graph } G(\bold k)$ and $\bold k!:=\prod_{i\in I}k_i!$. The definition of the graph $G(\bold k)$ is given in Section \ref{ppl1}.
 	It is well-known that the coefficients of $\pi_{\bold k}^G(q)$ are alternate in sign \cite[Theorem 10]{MR0224505}. We are interested in relating these coefficients to the cardinality of some well-known combinatorial objects. For this reason, we  define a positive variant of $\pi^G_{\bold k}(q)$  as follows: $$\widetilde{\pi}^G_{\bold k}(q) = (-1)^{\htt \bold k}\,\pi^G_{\bold k}(-q),$$
 where $\htt \bold k := \sum_{i = 1}^n k_i$. We note that the coefficients of $\widetilde{\pi}^G_{\bold k}(q)$ are non-negative. 
	Let $L_{G}(\bold k)$ be the bond lattice of $G$ of weight $\bold k$, which consists of 
	$\mathbf{J}=(J_1,\dots,J_k)$  satisfying the following properties:
	\begin{enumerate}
		\item[(i)] $\bold J$ is a multiset, i.e., we allow $J_i=J_j$ for $i\neq j$,
		
		\item[(ii)] each $J_i$ is a multiset and the subgraph spanned by the underlying set of $J_i$ is connected subgraph of $G$ for each $1\le i\le k$,
		
		\item[(iii)] the disjoint union $J_1\dot{\cup} \cdots \dot{\cup} J_k=\{\alpha_i,\dots, \alpha_i: \alpha_i \text{ occurs $k_i$ times for }i\in I\}$ and
		
		\item[(iv)] the minimum elements of parts of $\bold J$ are non-increasing, i.e., $\min J_1 \ge  \dots \ge \min J_k$.
	\end{enumerate}
We define $\wt J_i$ to be $\bold {m}_i \in \mathbb{Z}_+^n$ which counts the number of times an element of $I$ occurs in $J_i$ and $\wt \bold J := \sum_{i =1}^k \wt J_i$.  Let  $\lie g$ be a Kac-Moody Lie algebra  with the associated graph $G$ \cite{VV15}. Assume $\bold k = \bold 1$, then the entries of $\bold {m}_i$ are either zero or one.  Since each $J_i$ is a connected subset of $G$, $\bold {m}_i$ represents a positive root in $\lie g$ \cite[Proposition 4]{VV15} and hence the root space $\lie g_{{\bold m}_i} \ne 0$. The dimension of the root space $\lie g_{{\bold m}_i}$ is denoted by $\mult \bold {m}_i$. The definition of multiplicity can be extended to an element $\bold J$ of $L_G(\bold 1)$ by defining $\mult \bold J := \prod_{i=1}^k \mult \bold{m}_i$.
Given this, the following result expresses the chromatic polynomial in terms of bond lattice and the root multiplicities of $\lie g$  \cite[Theorem 1.1]{VV15}.  
\medskip
\begin{thm}\label{kac}
	Let  $\lie g$ be a Kac-Moody Lie algebra  with the associated graph $G$, then
	\begin{equation}\label{kaceq}
	\widetilde{\pi}_\bold 1^G(q) = \sum\limits_{\bold J \in L_G(\bold 1)} \mult \bold J\, q^{|\bold J|}
	\end{equation}
	where $|\bold J|$ denotes the number of parts in $\bold J$.
\end{thm}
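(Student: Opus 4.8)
The plan is to route the proof through the bond lattice of $G$ and a square-free truncation of the denominator identity of $\lie{g}$, reducing the whole statement to a single numerical identity between root multiplicities and Möbius numbers. First I would invoke the classical expression of the chromatic polynomial as the characteristic polynomial of the bond lattice: writing $\hat 0$ for the partition of $I$ into singletons and $\mu$ for the Möbius function, one has $\pi^{G}_{\bold 1}(q)=\sum_{\pi}\mu(\hat 0,\pi)\,q^{|\pi|}$, the sum ranging over $\pi$ in the bond lattice of $G$. Because the interval $[\hat 0,\pi]$ splits as a product of bond lattices of the induced subgraphs $G[B]$ over the blocks $B$ of $\pi$, the Möbius number factors, $\mu(\hat 0,\pi)=\prod_{B\in\pi}\mu_B$ with $\mu_B:=\mu_{G[B]}(\hat 0,\hat 1)$. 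Substituting $q\mapsto-q$, using $\htt\bold 1=|I|$ and the sign rule $\sgn\mu_B=(-1)^{|B|-1}$ of a geometric lattice, all signs cancel and I obtain $\widetilde{\pi}^{G}_{\bold 1}(q)=\sum_{\pi}q^{|\pi|}\prod_{B\in\pi}|\mu_B|$, which already matches the shape of \eqref{kaceq}.

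It then remains to prove the Key Lemma: $|\mu_B|=\mult\alpha_B$ for every connected induced subgraph $G[B]$, where $\alpha_B=\sum_{i\in B}\alpha_i$. The tool is the denominator identity of $\lie{g}$ restricted to square-free weights, which is exactly the regime forced by $\bold k=\bold 1$. On square-free weights the product side collapses to $\prod_{S\text{ connected}}(1-x^{\alpha_S})^{\mult\alpha_S}$, since non-square-free positive roots contribute only their constant term and the Serre relations, each involving a repeated index, never meet a square-free weight space; correspondingly $U(\lie{n}^{+})$ agrees on square-free weights with the free partially commutative algebra of $G$, whose Hilbert series is the Cartier--Foata inverse of $\sum_{S\text{ independent}}(-1)^{|S|}x^{\alpha_S}$. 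Equating the two sides and extracting the coefficient of $x^{\alpha_U}$ gives, for every $U\subseteq I$, the recursion $\sum_{\pi}(-1)^{|\pi|}\prod_{B\in\pi}\mult\alpha_B=(-1)^{|U|}[\,U\text{ independent}\,]$, the sum being over partitions $\pi$ of $U$ into connected blocks.

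Next I would show that the Möbius numbers obey the identical recursion. Replacing $\mult\alpha_B$ by $|\mu_B|=(-1)^{|B|-1}\mu_B$, the left-hand side becomes $(-1)^{|U|}\sum_{\pi}\mu(\hat 0,\pi)=(-1)^{|U|}\pi^{G[U]}_{\bold 1}(1)$, and $\pi^{G[U]}_{\bold 1}(1)$ equals $1$ when $G[U]$ is edgeless and $0$ otherwise, i.e. exactly $[\,U\text{ independent}\,]$. Since for each connected $U$ the single-block term $-\mult\alpha_U$ isolates that value in terms of strictly smaller blocks, the recursion determines the family $\{\mult\alpha_B\}$ uniquely by induction on $|B|$; as $\{|\mu_B|\}$ satisfies the same recursion with the same base case $|\mu_B|=1=\mult\alpha_i$ for singletons, the two families coincide, proving the Key Lemma.

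Finally I would identify $L_G(\bold 1)$ with the connected-block partitions of $I$: for $\bold k=\bold 1$ condition (iii) forces the parts $J_i$ to be disjoint connected vertex sets covering $I$, while the non-increasing condition (iv) on the minima fixes a unique ordering, so $\bold J\mapsto\{J_1,\dots,J_k\}$ is a bijection onto the bond lattice with $|\bold J|=|\pi|$ and $\mult\bold J=\prod_i\mult\bold m_i=\prod_{B\in\pi}\mult\alpha_B$. Combined with the Key Lemma and the reduced formula for $\widetilde{\pi}^{G}_{\bold 1}(q)$, this yields \eqref{kaceq}. I expect the main obstacle to be the square-free truncation in the Key Lemma, specifically the verification that the higher roots and the Serre relations make no contribution in square-free weight, since this is the single place where genuine Lie-theoretic input is required.
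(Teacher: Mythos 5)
Your proof is correct, but it takes a genuinely different route from the paper's. The paper never touches the M\"obius function of the bond lattice: it proves the more general Theorem \ref{mainthm} for arbitrary $\bold k$ by heap-theoretic means --- Viennot's inversion and logarithmic lemmas (Lemmas \ref{invlem} and \ref{loglem}), the pyramid proportionality lemma, the pyramid--Lyndon heap lemma, and Lalonde's Theorem \ref{lalonde} --- and then obtains Theorem \ref{kac} by specializing to $\bold k=\bold 1$ and identifying Kac--Moody root multiplicities with $\dim\mathcal{L}_{\bold 1}(G)$ in square-free weight (Corollary \ref{kacfree}, justified by the observation that the higher Serre relations live in non-square-free weight). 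Your argument is instead a reorganization of the original denominator-identity proof of \cite{VV15}: the Rota--Whitney characteristic-polynomial formula supplies the bond-lattice expansion, and your Key Lemma $|\mu_B|=\mult\alpha_B$ is proved by truncating the PBW character of $U(\lie n^+)$ to square-free weights, equating it with the Cartier--Foata series, and running a recursion-uniqueness induction; notably you replace the Weyl-group analysis of $w\rho-\rho$ in \cite{VV15} by Cartier--Foata. The essential Lie-theoretic input is identical in both proofs --- your square-free truncation claim is precisely the paper's Corollary \ref{kacfree} and the remark following it, and both arguments implicitly lean on the Gabber--Kac presentation of $\lie n^+$ --- while your Cartier--Foata step is Viennot's first fundamental lemma in monoid-algebra form; what you genuinely avoid is the logarithmic lemma and all of the pyramid/Lyndon-heap analysis, replaced by Rota's sign and product theorems for geometric lattices plus induction on block size. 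The trade-off is clear: your route is combinatorially lighter and essentially self-contained for $\bold k=\bold 1$, but it is locked to the square-free case, since for general $\bold k$ the parts become multisets, the sign rule $\sgn\mu_B=(-1)^{|B|-1}$ and the interval factorization $[\hat 0,\pi]\cong\prod_B L_{G[B]}$ both fail, and the coefficients become the chromatic multiplicities $\chrmult\bold J$ with their divisor sums (Corollary \ref{recursionmult}); the paper's heap machinery is exactly what makes the general-$\bold k$ statement of Theorem \ref{mainthm} and the Lyndon-length corollaries accessible.
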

 We want to prove this theorem for free partially commutative Lie algebra associated to the graph $G$ using heaps of pieces. Also, we want to extend this result to the $\bold k$-chromatic polynomials. Let $\mathcal{FL}(I)$ be the free Lie algebra on the vertex set $I$ and let $J$ be the ideal in $\mathcal{FL}(I)$ generated by the relations $\{[i,j] : \{i,j\} \notin E(G) \}$. The quotient algebra $\frac{\mathcal{FL}(I)}{J}$, denoted by $\mathcal{L}(G)$, is the free partially commutative Lie algebra associated to the graph $G$. It is well-known that $\mathcal{FL}(I)$ and hence $\mathcal{L}(G)$ is graded by $\mathbb{Z}_{+}^n$.
For $\bold k \in \mathbb{Z}_{+}^n$, we define, $\mult \bold k = \dim \mathcal{L}_{\bold k}(G)$.   Similar to the Kac-Moody algebra case, for $\bold J \in L_G(\bold 1)$ (with the notation as above) we define $\mult \bold J := \prod_{i=1}^k \mult \bold{m}_i$ product of dimensions of grade spaces of $\mathcal{L}(G)$. 
The absolute value of the linear coefficient of the chromatic polynomial, denoted by $\widetilde{\pi}_{\bold 1}^G(q)[q]$, is known as the chromatic discriminant of the graph $G$ \cite{MR1861053,a2}. 
Theorem \ref{kac} implies that, 
	$\mult \bold {m}_i = \widetilde{\pi}^G_{\bold {m}_i}(q)[q]$.  Hence $\mult \bold J =  \prod_{i=1}^k (\widetilde{\pi}^G_{\bold {m}_i}(q)[q])$   for $\bold J \in L_G(\bold 1)$.	
		When we extend Theorem \ref{kac} to the case of $\bold k$-chromatic polynomials, the bond lattice of weight $\bold k$ shows up naturally. Consider $\bold J = (J_1,\dots,J_1,\dots,J_r,\dots,J_r) \in L_G(\bold k)$ where each $J_i$ has weight $\bold {m}_i$ and occurs $m_i$ times.
		Since $\bold k$ is arbitrary, $J_i$ can be a multiset and hence the entries of $\bold {m}_i$ need not be zero or one. This shows that, because of Equation \eqref{connection}, the linear coefficient of $\bold {m}_i$-chromatic polynomial need not be an integer. Hence, $\mult \bold {m}_i$ need not be equal to $\widetilde{\pi}^G_{\bold {m}_i}(q)[q]$. We will see that, by Corollary \ref{recursionmult}, $\widetilde{\pi}^G_{\bold {m}_i}(q)[q]$ is equal to $\sum\limits_{l_i | \bold {m}_i}\frac{\mult (\frac{\bold {m}_i}{l_i})}{l_i}$. This suggests us to define the chromatic multiplicity of $\bold {m}_i$ as $\chrmult\, \bold {m}_i := \sum\limits_{l_i | \bold {m}_i}\frac{\mult (\frac{\bold {m}_i}{l_i})}{l_i}$ and
	$$\chrmult\, \bold J := \prod\limits_{i =1}^r \frac{1}{m_i!} \Big(\chrmult\, \bold {m}_i\Big)^{m_i}.$$ 
We have $\mult \bold J = \chrmult \bold J$ for $\bold J \in L_G(\bold 1)$ and also $\chrmult \bold J$ is equal to $ \prod\limits_{i =1}^r \frac{1}{m_i!} \Big(\widetilde{\pi}^G_{\bold {m}_i}(q)[q]\Big)^{m_i}$.  With this extended definition of multiplicity,
we prove the following theorem for free partially commutative Lie algebra $\mathcal{L}(G)$ using heaps of pieces.
\medskip
 \begin{thm} \label{mainthm}
 	With the notation as above we have
	\begin{equation}\label{heapexp}
	\widetilde{\pi}^G_{\bold k}(q) \,=\,\sum\limits_{\bold J \in L_G(\bold k)} \chrmult \bold J \,\,q^{|\bold J|}, 
	\end{equation}
	where $|\bold J|$ denotes the number of parts in $\bold J$.
\end{thm}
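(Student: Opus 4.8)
The plan is to carry out the whole argument inside Viennot's theory of heaps of pieces over $G$, taking the vertices as pieces and declaring two pieces to overlap---hence to be comparable in every heap containing both---exactly when the corresponding vertices are adjacent or equal; this matches the commutation relations defining $\mathcal{L}(G)$, so that $\mult \bold k=\dim\mathcal{L}_{\bold k}(G)$ counts the Lyndon heaps of weight $\bold k$. First I would give a heap reading of the left-hand side of \eqref{heapexp}: a proper $\bold k$-multicoloring with $q$ colors is the same datum as a distribution of a multilinear heap of weight $\bold k$ into $q$ ordered layers, each layer a trivial heap (an antichain), since adjacent vertices, and distinct copies of a single vertex, must receive distinct colors. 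Summing over multilinear heaps of weight $\bold k$ and their admissible layerings realizes $\widetilde{\pi}^G_{\bold k}(q)$ as a polynomial in $q$ whose $q$-grading is controlled by the pyramidal (equivalently Lyndon) structure of the heap.

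Next I would pass to the pyramid decomposition. A heap with a unique minimal piece---a pyramid---is automatically connected, its support spanning a connected subgraph of $G$, and by the Cartier--Foata inversion together with the resulting exponential formula for heaps the pyramids are the connected building blocks of an arbitrary layered heap. A multiset of pyramids of weights $\bold m_1,\dots,\bold m_k$ assembles into the parts $J_1,\dots,J_k$ of an element $\bold J\in L_G(\bold k)$: the connectedness of each $J_i$ is the connectedness forced by a unique minimum, condition (iv) fixes a canonical ordering of the parts, and the factor $1/m_i!$ in $\chrmult \bold J$ together with that ordering records that equal-weight parts form an unordered collection. The exponential structure of the passage from connected pieces to arbitrary configurations contributes exactly one factor of $q$ per part, producing the weight $q^{|\bold J|}$.

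The crux, which I expect to be the main obstacle, is to identify the contribution of a single connected part of weight $\bold m_i$ as $\chrmult \bold m_i$. Here I would invoke the pyramid proportionality lemma, that the number of pyramids of weight $\bold m_i$ equals $\htt(\bold m_i)\,\chrmult \bold m_i$, so that the $1/\htt(\bold m_i)$ weight natural to the logarithm of the heap generating function yields precisely $\chrmult \bold m_i$; the pyramid-and-Lyndon-heap lemma then expresses this reduced count through $\mult$ via the divisor sum $\chrmult \bold m_i=\sum_{l\mid \bold m_i}\mult(\bold m_i/l)/l$, the index $l$ recording the period of a possibly periodic pyramid. This difficulty is genuinely new for general $\bold k$: once a piece may repeat, pyramids can be periodic, and controlling their periods is exactly what forces the cyclotomic-type correction in $\chrmult$, whereas when $\bold k=\bold 1$ every $\bold m_i$ has $0/1$ entries, no nontrivial period occurs, $l=1$, and $\chrmult \bold J=\mult \bold J$, recovering Theorem \ref{kac}. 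Assembling the three steps and collecting the $q^{|\bold J|}$ grading then gives \eqref{heapexp}.
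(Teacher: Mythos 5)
Your proposal is correct and follows essentially the same route as the paper: the independent-set/trivial-heap reading of $\pi^G_{\bold k}(q)$, Viennot's inversion and logarithmic lemmas to pass to pyramids, and then the pyramid proportionality lemma together with the pyramid--Lyndon heap lemma to identify each connected part's contribution as $\chrmult \bold m_i$ (with the $1/m_i!$ symmetry factors from grouping ordered tuples and one power of $q$ per part). The only cosmetic difference is naming: the identity you attribute to the proportionality lemma, $|\mathcal{P}_{\bold m}(I,\zeta)| = \htt(\bold m)\,\chrmult\, \bold m$, is in the paper the combination of the two lemmas, stated as Proposition \ref{pl}.
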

We observe that the multiplicities appearing in Theorem \ref{kac} are of Kac-Moody Lie algebras whereas in the above theorem the multiplicities are of free partially commutative Lie algebras.  This gives the following connection between free partially commutative Lie algebras and Kac-Moody Lie algebras.
\begin{cor}\label{kacfree}
	Let $\lie g$ be a Kac-Moody Lie algebra with the associated graph $G$. Let $\bold k \in \mathbb{Z}_+^n$ be such that its entries are either zero or one, then $\dim \lie g_{\bold k} = \dim \mathcal{L}_{\bold k}(G)$.
\end{cor}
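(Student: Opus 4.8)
The plan is to compare the two expansions of one chromatic polynomial and read off a single coefficient. Fix a vector $\bold k$ whose entries are $0$ or $1$, set $S=\supp\bold k$, and note that $\bold k$ is exactly the all-ones vector $\bold 1$ of the induced subgraph $G_S$. First I would reduce to $G_S$. A homogeneous element of multidegree $\bold k$ in either algebra only involves the generators indexed by $S$, and the only defining relations that act in this multidegree are those supported on $S$ (the relations $[i,j]$ for non-edges $\{i,j\}\subseteq S$ on the free partially commutative side, and the Serre relations among $\{e_i,f_i,h_i:i\in S\}$ on the Kac--Moody side). Hence
\begin{equation*}
\dim \mathcal{L}_{\bold k}(G)=\dim \mathcal{L}_{\bold 1}(G_S),\qquad \dim \lie g(G)_{\bold k}=\dim \lie g(G_S)_{\bold 1},
\end{equation*}
so it suffices to prove the asserted identity for the all-ones vector of every induced subgraph.

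If $G_S$ is disconnected then $\bold 1$ is not a root of $\lie g(G_S)$ by \cite[Proposition 4]{VV15}, and no multilinear bracket over a disconnected support survives in $\mathcal{L}(G_S)$; thus both dimensions vanish and there is nothing to prove. So I assume $G_S$ connected and apply \thmref{kac} and \thmref{mainthm} to $G_S$ at $\bold k=\bold 1$. The key observation is that both expansions compute the \emph{same} polynomial $\widetilde{\pi}^{G_S}_{\bold 1}(q)$, one with Kac--Moody multiplicities and the other with free partially commutative multiplicities, so their coefficients must agree degree by degree.

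Next I would extract the coefficient of $q^1$. Since $q^{|\bold J|}$ records the number of parts of $\bold J$, the linear term comes precisely from the single-part elements $\bold J=(J_1)$ of $L_{G_S}(\bold 1)$; as $\wt J_1=\bold 1$, the only admissible such $\bold J$ is $J_1=S$ (admissible exactly because $G_S$ is connected). On the Kac--Moody side \thmref{kac} gives linear coefficient $\mult \bold 1=\dim \lie g(G_S)_{\bold 1}$. On the free partially commutative side, because $\bold 1$ is a $0/1$ vector its only divisor is $l=1$, so $\chrmult\,\bold 1=\mult\,\bold 1=\dim \mathcal{L}_{\bold 1}(G_S)$, and this is the single-part contribution in \thmref{mainthm}. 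Equating the two linear coefficients yields $\dim \lie g(G_S)_{\bold 1}=\dim \mathcal{L}_{\bold 1}(G_S)$, which together with the first step is exactly the claim.

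The displayed computations are routine; the step that requires care is the reduction to the induced subgraph, i.e.\ the assertion that root spaces and grade spaces of multidegree $\bold k$ are genuinely computed inside the subalgebra generated by the vertices of $S$. On the free partially commutative side this is immediate from the presentation $\mathcal{L}(G)=\mathcal{FL}(I)/J$, whereas on the Kac--Moody side it relies on the standard fact that the subalgebra generated by $\{e_i,f_i,h_i:i\in S\}$ is the Kac--Moody algebra attached to the principal submatrix, namely to $G_S$; this is where I expect the only genuine subtlety to lie.
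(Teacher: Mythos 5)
Your proof is correct and follows essentially the same route as the paper: the corollary there is obtained precisely by viewing \thmref{kac} and \thmref{mainthm} as two expansions of the same polynomial $\widetilde{\pi}^{G}_{\bold 1}(q)$ and matching multiplicities, with the key identification $\mult \bold{m}_i = \widetilde{\pi}^G_{\bold{m}_i}(q)[q]$ (i.e.\ the linear coefficient of the chromatic polynomial of the induced subgraph on the support) playing the same role on both the Kac--Moody side and, via Corollary \ref{recursionmult} and the definition of $\chrmult$, on the free partially commutative side. Your reduction to $G_S$, the treatment of the disconnected case, and the explicit extraction of the coefficient of $q$ merely spell out the details that the paper treats as immediate, so the two arguments coincide in substance.
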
	
\begin{rem}
 Let $\lie g$ be a Kac-Moody Lie algebra with the associated graph $G$ and let $\lie n^+$ be its positive part, then the defining relations of $\lie n^{+}$ are given by the Serre relations \cite[Theorem 1.2 and Theorem 9.11]{kac}. We observe that if all the entries of $\bold k$ are either zero or one then the Serre relations reduce to the commutation relations of $\mathcal{L}(G)$. This explains the previous corollary.
\end{rem}

In \cite{VV15}, the denominator identity for Kac-Moody Lie algebras is the main tool used in the proof of Theorem \ref{kac}.
We prove Theorem \ref{mainthm} using the fundamental lemmas of Viennot for heaps of pieces, the pyramid proportionality lemma, and the pyramid and Lyndon heap lemma. In this sense, our proof is purely heap theoretic. Our motivation to use heaps of pieces came from the works of Xavier Viennot and Lalonde: In \cite{MR11108}, the combinatorial theory of heaps of pieces was introduced by Xavier Viennot where he gave the applications of heaps of pieces to a wide range of areas like directed animals, polyominoes, Motzkin paths and orthogonal polynomials, Rogers-Ramanujan identities, fully commutative elements in Coxeter groups, Bessel functions and Lorentzian quantum gravity. 
The applications of heaps of pieces to the representation theory of complex simple Lie algebras can be seen in \cite{Green} where the combinatorial aspects of minuscule representation are studied.  
Using the notion Lyndon length of heaps we will prove the following expression for chromatic polynomial in terms of multilinear heaps and Lyndon length. See,  Proposition \ref{lyndonfact} for the precise definition of Lyndon length.

  \begin{cor}\label{chmhs}
  	Let $\mathcal{H}_{\bold 1}(I,\zeta)$ be the set of heaps of weight $\bold 1$, then
	\begin{equation}
	\widetilde{\pi}_\bold 1^G(q) = \sum\limits_{k \ge 1} | \{E \in \mathcal{H}_{\bold 1}(I,\zeta) : ll(E) = k \}|  q^k = \sum\limits_{E \in \mathcal{H}_{\bold 1}(I,\zeta)} q^{ll(E)}
	\end{equation}
	where $ll(E)$ denotes the Lyndon length of $E$.
\end{cor}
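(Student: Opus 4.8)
The plan is to read off \corref{chmhs} directly from \thmref{mainthm} by specializing to $\bold k = \bold 1$ and reinterpreting the right-hand side through the Lyndon factorization of heaps. Since $\mult \bold J = \chrmult \bold J$ for $\bold J \in L_G(\bold 1)$, the main theorem specializes to
\[
\widetilde{\pi}_\bold 1^G(q) = \sum_{\bold J \in L_G(\bold 1)} \mult \bold J\, q^{|\bold J|},
\]
so it suffices to match this sum, weight by weight, against $\sum_{E \in \mathcal{H}_\bold 1(I,\zeta)} q^{ll(E)}$.

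First I would invoke the Lyndon heap basis of the free partially commutative Lie algebra: the grade space $\mathcal{L}_{\bold{m}}(G)$ has a basis indexed by the Lyndon heaps of weight $\bold{m}$, so that $\mult \bold{m}_i = \dim \mathcal{L}_{\bold{m}_i}(G)$ is precisely the number of Lyndon heaps whose support is the $i$-th connected part underlying $\bold J$. Expanding the product $\mult \bold J = \prod_{i=1}^{|\bold J|} \mult \bold{m}_i$ then exhibits each summand as counting the tuples $(L_1,\dots,L_{|\bold J|})$ in which $L_i$ is a Lyndon heap supported on the part $J_i$. Because $\bold k = \bold 1$, the parts $J_1,\dots,J_{|\bold J|}$ are pairwise disjoint and partition $I$; in particular their minima are distinct vertices, and condition (iv) of the bond lattice forces the unique arrangement with $\min J_1 > \cdots > \min J_{|\bold J|}$.

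The bijection I would then set up sends such a tuple to the superposition (heap product) $E = L_1 \circ \cdots \circ L_{|\bold J|}$, which is a heap of weight $\bold 1$ since the supports are disjoint and exhaust $I$. By \propref{lyndonfact} every heap admits a unique factorization as a non-increasing product of Lyndon heaps, and $ll(E)$ is by definition the number of factors; running this factorization backwards recovers the tuple $(L_i)$ and hence the partition $\bold J$. Thus $|\bold J| = ll(E)$ and the correspondence is a weight-preserving bijection onto $\mathcal{H}_\bold 1(I,\zeta)$. Substituting yields $\widetilde{\pi}_\bold 1^G(q) = \sum_{E} q^{ll(E)}$, and collecting the heaps with a common value $ll(E) = k$ produces the first displayed equality of the corollary.

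The step I expect to require the most care is the compatibility of the two orderings: the non-increasing order on Lyndon heaps governing the factorization in \propref{lyndonfact} must be shown to agree, on multilinear heaps, with the ordering of parts by decreasing minimal vertex demanded by condition (iv). In the weight-$\bold 1$ case this is tractable precisely because the supports are disjoint and the minima are all distinct, so the two orders collapse to the same strict total order; checking that the factorization is then simultaneously well defined and unique is the crux, after which the bijection and the claimed identity follow formally.
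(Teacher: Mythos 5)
Your proposal is correct and takes essentially the same approach as the paper: the paper likewise specializes Theorem~\ref{mainthm} to $\bold k = \bold 1$ and then proves the identity via the bijection (its Proposition~\ref{bondheap}, stated for general weight $\bold k$) that sends a heap to the tuple of supports of its Lyndon factors, using Theorem~\ref{lalonde} to count the Lyndon heaps on each part. The ordering compatibility you flag as the delicate point is handled the same way in the paper, which simply asserts that non-increasing minima of the parts make the chosen product of Lyndon heaps the Lyndon factorization.
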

We extend the definition of Lyndon length to acyclic orientations of $G$ by using a natural bijection between multilinear heaps and acyclic orientations. This will give us the following expression for chromatic polynomial in terms of acyclic orientations.
\begin{cor}\label{acycliccor}
	Let $\mathcal{O}(G)$ be the set of all acyclic orientations of $G$, then
	\begin{equation}\label{1acyclicexp}
	\widetilde{\pi}^G_{\bold 1}(q) \,= \sum\limits_{k \ge 0} \,|\{\mathcal{O} \in \mathcal{O}(G) : ll(\mathcal{O}) = k\}| \, q^k = \sum\limits_{\mathcal{O} \,\in\, \mathcal{O}(G)} q^{ll(\mathcal{O})}.
	\end{equation}
\end{cor}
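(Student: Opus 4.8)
The plan is to deduce this corollary directly from Corollary~\ref{chmhs} by transporting the generating-function identity through the announced bijection between multilinear heaps and acyclic orientations; everything reduces to making that bijection precise and checking it preserves Lyndon length. Recall that a heap $E \in \mathcal{H}_{\bold 1}(I,\zeta)$ of weight $\bold 1$ may be identified with a partial order on the vertex set $I$ in which any two adjacent vertices (the non-commuting, hence concurrent, pieces) are comparable and every covering relation joins two adjacent vertices. To such a heap I would associate an orientation $\Phi(E)$ of $G$ by orienting each edge $\{i,j\}$ from the smaller to the larger endpoint in the heap order; since the heap order is a genuine partial order, no directed cycle can arise, so $\Phi(E) \in \mathcal{O}(G)$.

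Conversely, given $\mathcal{O} \in \mathcal{O}(G)$, I would define a relation $\Psi(\mathcal{O})$ on $I$ as the transitive closure of $\mathcal{O}$, setting $i \prec j$ whenever there is a directed path from $i$ to $j$; acyclicity of $\mathcal{O}$ makes this antisymmetric, hence a partial order. The point that needs checking -- and this is the crux -- is that $\Psi(\mathcal{O})$ is an admissible heap order, i.e.\ each of its covering relations joins adjacent vertices. Indeed, if $i \lessdot k$ were a cover with $i,k$ non-adjacent, then $i \prec k$ is witnessed by a directed path of length at least two, so its first intermediate vertex $j$ satisfies $i \prec j \prec k$ with $j \neq i,k$, contradicting the covering property. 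Thus every cover of $\Psi(\mathcal{O})$ comes from a single oriented edge, and $\Psi(\mathcal{O})$ is a heap of weight $\bold 1$. It is then routine to see that $\Phi$ and $\Psi$ are mutually inverse (the edge-comparabilities of $\Phi(E)$ have the same transitive closure as the covers of $E$, and $\Phi$ recovers the orientation of each edge of $\Psi(\mathcal{O})$), so $\Phi$ is a bijection $\mathcal{H}_{\bold 1}(I,\zeta) \xrightarrow{\sim} \mathcal{O}(G)$.

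With the bijection in hand, the Lyndon length on acyclic orientations is, by the very definition announced before the statement, $ll(\mathcal{O}) := ll(\Phi^{-1}(\mathcal{O}))$, equivalently $ll(\Phi(E)) = ll(E)$ for every heap $E$. I would then simply reindex the sum of Corollary~\ref{chmhs} along $\Phi$:
\begin{equation*}
\widetilde{\pi}^G_{\bold 1}(q) \;=\; \sum_{E \in \mathcal{H}_{\bold 1}(I,\zeta)} q^{ll(E)} \;=\; \sum_{\mathcal{O} \in \mathcal{O}(G)} q^{ll(\Phi^{-1}(\mathcal{O}))} \;=\; \sum_{\mathcal{O} \in \mathcal{O}(G)} q^{ll(\mathcal{O})},
\end{equation*}
and finally collect the terms of equal Lyndon length to obtain the coefficient-wise form $\sum_{k \ge 0} |\{\mathcal{O} \in \mathcal{O}(G) : ll(\mathcal{O}) = k\}|\,q^k$, which is exactly \eqref{1acyclicexp}.

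The only nontrivial ingredient is the verification that transitive closure carries acyclic orientations to admissible heap orders, and that this is inverse to restricting a heap order to its edges; I expect the adjacency-of-covers argument above to be the main obstacle, although it is short once phrased in terms of directed paths. Every remaining step is a formal change of summation variable, since the Lyndon length is transported rather than recomputed.
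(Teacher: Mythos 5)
Your proposal is correct and follows the same route as the paper: both arguments reduce the corollary to Corollary~\ref{chmhs} through a bijection between $\mathcal{H}_{\bold 1}(I,\zeta)$ and $\mathcal{O}(G)$, and your map $\Phi$ is the paper's map $\mathcal{T}$ of Lemma~\ref{bijection} in different clothing (for adjacent pieces of a multilinear heap, lying at a lower level is equivalent to being smaller in the heap order, so orienting edges upward in level is the same as orienting them upward in the order). The differences lie in what gets verified. Your treatment of bijectivity is more careful than the paper's: Lemma~\ref{bijection} describes the forward map via levels and simply asserts that ``this assigning process is reversible,'' whereas your transitive-closure construction, together with the argument that every covering relation of $\Psi(\mathcal{O})$ joins adjacent vertices, actually exhibits the inverse. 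Conversely, you dispose of the Lyndon length by definitional transport, $ll(\mathcal{O}) := ll(\Phi^{-1}(\mathcal{O}))$, which is legitimate --- the introduction announces exactly this extension --- and makes the final reindexing immediate. The paper instead defines $ll(\mathcal{O})$ intrinsically: an acyclic orientation is Lyndon when it has a unique source at the minimal vertex of its support, a composition of such orientations is defined, and Proposition~\ref{acyclicfact} establishes the unique Lyndon factorization of any $\mathcal{O} \in \mathcal{O}(G)$ by pushing the heap factorization of Proposition~\ref{lyndonfact} through $\mathcal{T}$; the observation that $\mathcal{T}$ preserves this intrinsic statistic is then the substantive step behind the corollary. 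That extra layer is what the paper's later applications consume: the Greene--Zaslavsky result is read off from the linear coefficient precisely because $ll(\mathcal{O})=1$ is intrinsically characterized as ``unique source at the minimal vertex,'' and the $(m,\lambda)$-labelling of the final section is built on the factorization of Proposition~\ref{acyclicfact}. So your proof is complete for the statement as phrased in the introduction, but if $ll(\mathcal{O})$ is to mean the intrinsic factorization length, the compatibility of $\Phi$ with Lyndon factorizations --- i.e.\ that $\Phi$ sends Lyndon heaps to unique-source orientations and heap factorizations to orientation factorizations --- still has to be proved, and your proposal does not address it.
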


 In the final section, we will prove results regarding Stanley's chromatic polynomial reciprocity theorem.   The following results of Stanley \cite{S73}, and Greene and Zaslavsky \cite{GZ83} respectively are classical and follow immediately from Corollary \ref{acycliccor}. In \cite{deb19}, these results are proved using the method of involution on heaps. In this section, we will denote the chromatic polynomial by $\chi_G(q)$ for notational convenience.
 \begin{thm}\label{0stanley} 
 	With the notation as above, we have
 	$$ \widetilde{\chi}_G(1) = \text{ the number of acyclic orientations of $G$}.$$
 \end{thm}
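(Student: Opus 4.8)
The plan is to derive Theorem~\ref{0stanley} as an immediate specialization of Corollary~\ref{acycliccor}. The key observation is that the reciprocal polynomial $\widetilde{\chi}_G(q) = \widetilde{\pi}^G_{\bold 1}(q)$ admits the sum expansion $\sum_{\mathcal{O} \in \mathcal{O}(G)} q^{ll(\mathcal{O})}$, where the sum ranges over all acyclic orientations of $G$ and $ll(\mathcal{O})$ denotes the Lyndon length attached to $\mathcal{O}$ via the bijection with multilinear heaps. So the whole content of the theorem is to evaluate this generating polynomial at $q = 1$. Since $1^{ll(\mathcal{O})} = 1$ for every acyclic orientation regardless of its Lyndon length, the sum collapses to a plain count of the index set, namely $|\mathcal{O}(G)|$.

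Concretely, first I would invoke Corollary~\ref{acycliccor} to write
\begin{equation}
\widetilde{\chi}_G(q) = \sum_{\mathcal{O} \in \mathcal{O}(G)} q^{ll(\mathcal{O})}.
\end{equation}
Next I would substitute $q = 1$. Because each summand becomes $1^{ll(\mathcal{O})} = 1$, we obtain
\begin{equation}
\widetilde{\chi}_G(1) = \sum_{\mathcal{O} \in \mathcal{O}(G)} 1 = |\mathcal{O}(G)|,
\end{equation}
which is precisely the number of acyclic orientations of $G$. This completes the argument.

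There is essentially no obstacle in this final step itself; all the genuine work has already been absorbed into Corollary~\ref{acycliccor}, which in turn rests on the main theorem (Theorem~\ref{mainthm}) together with the heap-theoretic bijection between multilinear heaps and acyclic orientations and the definition of Lyndon length. If I were building the machinery from scratch rather than quoting it, the hard part would be establishing that bijection and verifying that the Lyndon-length statistic is well-defined and transfers correctly across it, so that the acyclic-orientation expansion is valid in the first place. But granting the earlier results, the proof of this statement is a one-line evaluation at $q = 1$, and I would present it as such, emphasizing only that the exponent $ll(\mathcal{O})$ plays no role once $q = 1$ since every power of $1$ equals $1$.
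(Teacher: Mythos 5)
Your proposal is correct and matches the paper's own treatment: the paper states that Theorem~\ref{0stanley} follows immediately from Corollary~\ref{acycliccor}, i.e.\ from the expansion $\widetilde{\chi}_G(q) = \sum_{\mathcal{O} \in \mathcal{O}(G)} q^{ll(\mathcal{O})}$ evaluated at $q=1$, exactly as you do. Nothing further is needed.
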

 \begin{thm} 
 	Fix $i \in I$.
 	$$\widetilde{\chi}_G[q] = \text{ The number of acyclic orientations of $G$ with a unique source at $\alpha_i$}.$$
 \end{thm}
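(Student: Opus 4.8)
The plan is to extract the linear coefficient directly from the acyclic-orientation expansion and then to identify combinatorially which orientations contribute to it. Since in this section $\widetilde{\chi}_G(q)=\widetilde{\pi}^G_{\bold 1}(q)$, Corollary~\ref{acycliccor} gives $\widetilde{\chi}_G(q)=\sum_{\mathcal O\in\mathcal O(G)}q^{ll(\mathcal O)}$, so reading off the coefficient of $q$ yields
$$\widetilde{\chi}_G[q]=\bigl|\{\mathcal O\in\mathcal O(G):ll(\mathcal O)=1\}\bigr|.$$
Transporting this through the bijection between acyclic orientations and multilinear heaps, $\widetilde{\chi}_G[q]$ equals the number of heaps $E\in\mathcal H_{\bold 1}(I,\zeta)$ with $ll(E)=1$. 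By the Lyndon factorization of Proposition~\ref{lyndonfact}, $ll(E)$ is the number of factors in the unique non-increasing decomposition of $E$ into Lyndon heaps, so $ll(E)=1$ precisely when $E$ is itself a Lyndon heap. Thus the first step reduces the theorem to counting multilinear Lyndon heaps.

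Next I would pin these down using the pyramid and Lyndon heap lemma, which shows that every Lyndon heap is a pyramid, i.e. has a unique minimal piece. In the multilinear case all pieces are distinct, and then---exactly as a Lyndon word over distinct letters must begin with the smallest of its letters---the Lyndon condition forces the unique minimal piece to be the least vertex of $I$; conversely, any multilinear pyramid whose minimal piece is the least vertex satisfies the Lyndon condition. Hence, for a total order on $I$ with least element $\alpha$, the multilinear Lyndon heaps are exactly the pyramids with unique minimal piece $\alpha$. Under the bijection of Corollary~\ref{acycliccor}, the minimal pieces of a heap are precisely the sources of the associated orientation, so a pyramid with unique minimal piece $\alpha$ corresponds to an acyclic orientation with a single source, located at $\alpha$. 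Combining with the first step, $\widetilde{\chi}_G[q]$ counts the acyclic orientations of $G$ with a unique source at the least vertex.

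It remains to free the statement from the choice of least vertex. The coefficient $\widetilde{\chi}_G[q]$ is an invariant of $G$ and does not depend on the total order placed on $I$, whereas the Lyndon length, and with it the identification above, is defined relative to that order. Given an arbitrary $\alpha_i$, I would simply choose a total order on $I$ in which $\alpha_i$ is the least element; every construction used above was stated for an arbitrary totally ordered vertex set and therefore applies verbatim, giving
$$\widetilde{\chi}_G[q]=\bigl|\{\mathcal O\in\mathcal O(G):\mathcal O\text{ has a unique source, at }\alpha_i\}\bigr|.$$
Because the left-hand side is order-independent, this identity holds for every $i\in I$, which is the assertion.

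I expect the second paragraph to be the main obstacle: the crux is showing that, for multilinear heaps, the Lyndon property is equivalent to the minimal piece being the least vertex, and this is precisely where the pyramid and Lyndon heap lemma is needed. The order-independence exploited in the last paragraph is then what produces the striking conclusion that the count is the same no matter which vertex $\alpha_i$ is singled out.
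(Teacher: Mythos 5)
Your proof is correct and follows essentially the route the paper intends: the paper derives this theorem directly from Corollary~\ref{acycliccor} by reading off the coefficient of $q$ (orientations of Lyndon length $1$), identifying these via the length-preserving bijection $\mathcal{T}$ and Lemma~\ref{iff} with multilinear admissible pyramids, i.e.\ unique-source-at-the-minimum orientations, and then using order-independence of the coefficient to move the source to an arbitrary $\alpha_i$ (the same ``privilege to be the least element'' trick used in the proof of Lemma~\ref{mainlem}). One small naming slip: the equivalence you need is Lemma~\ref{iff} (multilinear Lyndon $\Leftrightarrow$ admissible pyramid), not the pyramid and Lyndon heap lemma, which is the counting identity \eqref{pleq}; your argument itself, however, proves the right statement.
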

 In \cite{S73}, Theorem \ref{0stanley} is stated in the following more general context of $\lambda$-compatible pairs. 
 Let $\mathcal{O}$ be an acyclic orientation of $G$ and for a positive integer $\lambda$ define $[\lambda] := \{1,2,\dots,\lambda\}$. For a map $\sigma: I \to [\lambda]$, we say $(\sigma, \mathcal{O})$ is a $\lambda$-compatible pair if for each directed edge $i\rightarrow j$ in $\mathcal{O}$
 we have $\sigma(i)\ge \sigma(j)$. 
 \begin{thm}\label{1stanley}
 	Let $\lambda$ be a positive integer, then
 	$$ \widetilde{\chi}_G(\lambda) = \text{ the number of $\lambda$-compatible pairs of $G$}.$$
 \end{thm}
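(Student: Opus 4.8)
The plan is to deduce Theorem~\ref{1stanley} directly from Corollary~\ref{acycliccor}, which already expresses $\widetilde{\chi}_G(q)$ as a generating polynomial over acyclic orientations weighted by Lyndon length. The key point is that both sides of the claimed identity have natural interpretations, so I would prove the stronger \emph{refined} statement that, for each fixed $k\ge 0$, the number of acyclic orientations $\mathcal O$ with $ll(\mathcal O)=k$ counts something whose sum over all $\mathcal O$, after specializing $q=\lambda$, is exactly the number of $\lambda$-compatible pairs. Concretely, evaluating \eqref{1acyclicexp} at $q=\lambda$ gives $\widetilde{\chi}_G(\lambda)=\sum_{\mathcal O\in\mathcal O(G)}\lambda^{ll(\mathcal O)}$, so it suffices to show that for each acyclic orientation $\mathcal O$ the number of maps $\sigma\colon I\to[\lambda]$ forming a $\lambda$-compatible pair with $\mathcal O$ equals $\lambda^{ll(\mathcal O)}$.

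First I would recall Stanley's original combinatorial decomposition: every $\lambda$-compatible pair $(\sigma,\mathcal O)$ determines, and is determined by, the orientation $\mathcal O$ together with the level sets of the weakly order-reversing map $\sigma$ along the directed edges. The central step is therefore to identify the number of compatible $\sigma$'s for a \emph{given} $\mathcal O$ with $\lambda^{ll(\mathcal O)}$. To do this I would use the bijection between acyclic orientations and multilinear heaps that underlies Corollary~\ref{acycliccor}, translating the Lyndon length $ll(\mathcal O)$ into the corresponding invariant of the associated heap. The idea is that a compatible labelling $\sigma$ is equivalent to a monotone function on the poset structure imposed by $\mathcal O$, and the freedom in choosing such a function is governed precisely by the number of Lyndon pieces into which the heap canonically factors; each independent Lyndon block contributes a free choice out of $\lambda$ values, yielding the factor $\lambda^{ll(\mathcal O)}$.

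The main obstacle, I expect, will be establishing that the counting of compatible maps $\sigma$ factors multiplicatively according to the Lyndon decomposition of the heap, since a priori the constraint $\sigma(i)\ge\sigma(j)$ along every directed edge couples all the vertices together. To overcome this I would appeal to the pyramid proportionality lemma and the pyramid and Lyndon heap lemma cited in the introduction, which control how heaps decompose into pyramids over their minimal pieces; these should give the required independence of the choices on distinct Lyndon blocks. Once the per-orientation count $\lambda^{ll(\mathcal O)}$ is verified, summing over $\mathcal O\in\mathcal O(G)$ and comparing with the evaluation of \eqref{1acyclicexp} at $q=\lambda$ completes the proof. As a sanity check, setting $\lambda=1$ forces $\sigma$ to be constant and hence compatible with every orientation, recovering Theorem~\ref{0stanley} from the identity $\widetilde{\chi}_G(1)=\sum_{\mathcal O}1=|\mathcal O(G)|$, which confirms the consistency of the approach.
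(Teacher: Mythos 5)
Your proposal hinges on the refined claim that for each \emph{fixed} acyclic orientation $\mathcal{O}$ the number of maps $\sigma\colon I\to[\lambda]$ making $(\sigma,\mathcal{O})$ a $\lambda$-compatible pair equals $\lambda^{ll(\mathcal{O})}$. That claim is false, and the argument collapses with it. Take $G=K_2$ with vertices $1<2$, so $\widetilde{\chi}_G(q)=q^2+q$. The orientation $1\to 2$ corresponds to the Lyndon heap $1\circ 2$ and has $ll=1$; the orientation $2\to 1$ corresponds to $2\circ 1$ and has $ll=2$. For either orientation, the compatible maps are exactly those weakly decreasing along the unique directed edge, and there are $\binom{\lambda+1}{2}=\lambda(\lambda+1)/2$ of them for each — not $\lambda$ and not $\lambda^2$. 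At $\lambda=2$ the true counts are $3$ and $3$, against your predicted $2$ and $4$. The totals agree ($3+3=6=\widetilde{\chi}_G(2)$), but the per-orientation distribution does not: the number of compatible $\sigma$ for a fixed $\mathcal{O}$ is the order polynomial of the poset induced by $\mathcal{O}$, which is generally not a monomial in $\lambda$. Your $\lambda=1$ sanity check passes only because both quantities degenerate to $1$ there, so it cannot detect this failure.

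The underlying confusion is between two different labelling schemes, and the paper itself flags exactly this point in its closing remark: in a $\lambda$-labelling (the paper's $(0,\lambda)$-labelling) the label is \emph{constant} on each Lyndon factor and the choices across factors are free and unconstrained, which is what produces $\lambda^{ll(\mathcal{O})}$ per orientation; in a $\lambda$-compatible pair the labels may vary inside a factor and are coupled across factors by the orientation of straddling edges. These are different sets of labelled orientations which merely have the same total cardinality, so no appeal to the pyramid proportionality lemma or the pyramid and Lyndon heap lemma can force the per-orientation counts to match — the identity you want to prove locally is numerically wrong. Note also that the paper does not prove Theorem \ref{1stanley}: it is Stanley's classical reciprocity theorem, quoted from \cite{S73} as motivation, and the paper's own contribution (Theorem \ref{(m,l)}) is a reciprocity statement for the new model of $(m,\lambda)$-labelled orientations, precisely because that model — unlike compatible pairs — does distribute as $\lambda^{ll(\mathcal{O})}$ over orientations. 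To salvage your approach you would need a bijection between the set of all $\lambda$-compatible pairs and the set of all $\lambda$-labelled acyclic orientations that is allowed to change the underlying orientation; nothing in the paper supplies such a bijection.
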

 This theorem is known as Stanley's reciprocity theorem for chromatic polynomials. We observe that the $\lambda$-compatible pairs are acyclic orientations which are labelled subject to a compatibility condition coming from the underlying poset structure. We will define a similar labelling on acyclic orientations in which the compatibility condition is coming from its Lyndon factorization. For a positive integer $\lambda$ and a non-negative integer $m$, we will introduce the $(m,\lambda)$-labelling on acyclic orientations. We will prove the following reciprocity theorem for the derivatives of chromatic polynomials.
 \begin{thm}\label{(m,l)}
 	Let $\lambda$ be a positive integer and let $0 \le m \le n$ be an integer then
 	$$ \widetilde{\chi}_G^{(m)}(\lambda) = \text{ the number of $(m,\lambda)$-labelled acyclic orientations of $G$}.$$
 \end{thm}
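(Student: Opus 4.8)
The plan is to prove Theorem~\ref{(m,l)} by relating the $m$-th derivative of $\widetilde{\chi}_G$ to the coefficient extraction already captured by Corollary~\ref{acycliccor}. Starting from the expansion
$$\widetilde{\chi}_G(q) = \sum_{\mathcal{O}\in\mathcal{O}(G)} q^{ll(\mathcal{O})},$$
I would differentiate $m$ times term by term. Since $\frac{d^m}{dq^m}q^{ll(\mathcal{O})} = ll(\mathcal{O})(ll(\mathcal{O})-1)\cdots(ll(\mathcal{O})-m+1)\,q^{ll(\mathcal{O})-m}$, evaluating at $q=\lambda$ gives
$$\widetilde{\chi}_G^{(m)}(\lambda) = \sum_{\mathcal{O}\in\mathcal{O}(G)} ll(\mathcal{O})^{\underline{m}}\,\lambda^{ll(\mathcal{O})-m},$$
where $ll(\mathcal{O})^{\underline{m}}$ denotes the falling factorial. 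The key combinatorial task is then to interpret the product $ll(\mathcal{O})^{\underline{m}}\,\lambda^{ll(\mathcal{O})-m}$ as a count of decorations of $\mathcal{O}$, and to \emph{define} the $(m,\lambda)$-labelling precisely so that this product equals the number of $(m,\lambda)$-labellings supported on $\mathcal{O}$.

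Guided by Stanley's $\lambda$-compatible pairs, where the exponent of $\lambda$ in each monomial is matched against order-preserving maps into $[\lambda]$, I expect the natural definition to involve the Lyndon factorization of $\mathcal{O}$ into $ll(\mathcal{O})$ Lyndon blocks. The factor $ll(\mathcal{O})^{\underline{m}}$ should count the ways of selecting and linearly ordering $m$ of these $ll(\mathcal{O})$ blocks (an injective choice of $m$ distinguished blocks), and the factor $\lambda^{ll(\mathcal{O})-m}$ should count assignments from $[\lambda]$ to the remaining $ll(\mathcal{O})-m$ undistinguished blocks, subject to whatever compatibility condition the Lyndon factorization imposes. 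Thus the first step after differentiating is to state the $(m,\lambda)$-labelling so that a $(m,\lambda)$-labelled acyclic orientation is exactly an acyclic orientation $\mathcal{O}$ together with (i) an injection from $[m]$ into the set of Lyndon blocks of $\mathcal{O}$ and (ii) a compatible map from the remaining blocks into $[\lambda]$. With that definition in hand, the identity $\widetilde{\chi}_G^{(m)}(\lambda) = \#\{(m,\lambda)\text{-labelled acyclic orientations}\}$ follows by summing the block-wise count over all $\mathcal{O}\in\mathcal{O}(G)$.

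To make the summation rigorous I would verify that the local count at each $\mathcal{O}$ is genuinely $ll(\mathcal{O})^{\underline{m}}\,\lambda^{ll(\mathcal{O})-m}$: the injective choice of $m$ ordered blocks out of $ll(\mathcal{O})$ contributes the falling factorial, and each of the other blocks independently receives one of $\lambda$ labels. Here the compatibility condition must be set up so that it imposes no further restriction beyond the free $\lambda$ choices per block—mirroring the way Stanley's compatibility between $\sigma$ and $\mathcal{O}$ is automatically consistent with the poset—so that the product form is exactly preserved. I would lean on the bijection between multilinear heaps and acyclic orientations (used for Corollary~\ref{acycliccor}) together with the Lyndon length machinery of Proposition~\ref{lyndonfact} to transport the block structure from heaps to orientations cleanly.

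The main obstacle I anticipate is pinning down the correct compatibility condition in the definition of the $(m,\lambda)$-labelling so that the count per orientation is precisely $ll(\mathcal{O})^{\underline{m}}\,\lambda^{ll(\mathcal{O})-m}$, with no over- or under-counting. In particular, I must ensure that the $m$ distinguished blocks are handled as an \emph{ordered} injective selection (to produce the falling factorial rather than a binomial coefficient) while the remaining blocks are labelled freely; getting these two regimes to interact correctly through the Lyndon factorization, and confirming they reproduce Stanley's classical case when $m=0$ (recovering Theorem~\ref{1stanley}), is the delicate part. Once the definition is calibrated against the differentiated expansion, the proof reduces to the elementary falling-factorial differentiation of a monomial followed by term-by-term summation over $\mathcal{O}(G)$.
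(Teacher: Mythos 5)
Your proposal follows essentially the same route as the paper: differentiate the Lyndon-length expansion $\widetilde{\chi}_G(q)=\sum_{\mathcal{O}}q^{ll(\mathcal{O})}$ term by term, interpret $ll(\mathcal{O})(ll(\mathcal{O})-1)\cdots(ll(\mathcal{O})-m+1)\,\lambda^{ll(\mathcal{O})-m}$ as the number of block-wise labellings of the Lyndon factorization, and transport the structure between multilinear heaps and acyclic orientations via Lemma~\ref{bijection} and Proposition~\ref{acyclicfact}. The only (cosmetic) difference is how the falling factorial is realized: the paper gives the first $m$ canonical Lyndon factors $E_j$ constant labels drawn from $[k-j+1]$, whereas you select and order $m$ blocks injectively and label the rest freely by $[\lambda]$ --- both yield $k(k-1)\cdots(k-m+1)\lambda^{k-m}$ per orientation, so the resulting count is identical.
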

 
This theorem is analogous to Theorem \ref{1stanley} for the case of the derivatives $\widetilde{\chi}_G^{(m)}(q)$ and $(m,\lambda)$-labelled acyclic orientations. When $m=0$, we call $(m,\lambda)$-labelling simply $\lambda$-labelling and this combinatorial model is counted by the chromatic polynomial evaluated at the negative integers:
 \begin{cor}
	$$ \widetilde{\chi}_G(\lambda) = \text{ the number of $\lambda$-labelled acyclic orientations of $G$}$$
\end{cor}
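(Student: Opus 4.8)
The plan is to obtain the statement as the $m=0$ instance of \thmref{(m,l)}, and then to record a short self-contained derivation from \corref{acycliccor} so that the corollary stands on its own. For the specialization route I first observe that $\widetilde{\chi}_G^{(0)}(q)=\widetilde{\chi}_G(q)$, since the zeroth derivative of a function is the function itself, and that by the stated naming convention a $(0,\lambda)$-labelled acyclic orientation is precisely what is called a $\lambda$-labelled acyclic orientation. Substituting $m=0$ into \thmref{(m,l)} then puts $\widetilde{\chi}_G(\lambda)$ on the left and the number of $\lambda$-labelled acyclic orientations on the right, which is exactly the assertion to be proved.

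For the direct route I would start from the identity $\widetilde{\chi}_G(q)=\sum_{\mathcal{O}\in\mathcal{O}(G)}q^{ll(\mathcal{O})}$ supplied by \corref{acycliccor}, where $ll(\mathcal{O})$ denotes the Lyndon length of the acyclic orientation $\mathcal{O}$, i.e.\ the number of factors in the Lyndon factorization of the multilinear heap corresponding to $\mathcal{O}$. Evaluating at a positive integer $\lambda$ gives
\begin{equation*}
\widetilde{\chi}_G(\lambda)=\sum_{\mathcal{O}\in\mathcal{O}(G)}\lambda^{ll(\mathcal{O})}.
\end{equation*}
The next step is to read each summand combinatorially: the factor $\lambda^{ll(\mathcal{O})}$ counts the functions from the set of $ll(\mathcal{O})$ Lyndon factors of $\mathcal{O}$ into $[\lambda]$, and unwinding the definition of $(m,\lambda)$-labelling at $m=0$ shows that such a function is exactly a $\lambda$-labelling of $\mathcal{O}$. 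Summing over all acyclic orientations then interprets the right-hand side as the total number of $\lambda$-labelled acyclic orientations of $G$, finishing the argument.

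The only real point to check is that the $m=0$ specialization of an $(m,\lambda)$-labelling imposes no residual compatibility constraint, so that the per-orientation count is the unconstrained power $\lambda^{ll(\mathcal{O})}$ rather than a restricted count such as the number of weakly monotone $[\lambda]$-sequences along the Lyndon factorization; I expect this to be the main (and essentially only) obstacle. It is settled by inspecting the definition: when $m=0$ there is no distinguished part to be selected, so the falling-factorial factor $ll(\mathcal{O})(ll(\mathcal{O})-1)\cdots(ll(\mathcal{O})-m+1)$ present for general $m$ is the empty product $1$, and every one of the $ll(\mathcal{O})$ Lyndon factors is assigned a label in $[\lambda]$ freely. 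This is consistent with \thmref{(m,l)}, whose two sides both equal $\sum_{\mathcal{O}}\frac{ll(\mathcal{O})!}{(ll(\mathcal{O})-m)!}\,\lambda^{ll(\mathcal{O})-m}$ and collapse to $\sum_{\mathcal{O}}\lambda^{ll(\mathcal{O})}=\widetilde{\chi}_G(\lambda)$ at $m=0$, confirming that both routes deliver the same corollary.
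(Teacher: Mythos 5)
Your proposal is correct and follows essentially the same route as the paper: the corollary is precisely the $m=0$ specialization of \thmref{(m,l)}, where the falling-factorial factor becomes the empty product and each of the $ll(\mathcal{O})$ Lyndon factors receives a label in $[\lambda]$ freely, giving the count $\sum_{\mathcal{O}\in\mathcal{O}(G)}\lambda^{ll(\mathcal{O})}=\widetilde{\chi}_G(\lambda)$ via \corref{acycliccor}. Your ``direct route'' is not genuinely different from the paper's argument --- it is the same counting that underlies the proof of \thmref{(m,l)} restricted to $m=0$ --- but your explicit check that no residual compatibility constraint survives at $m=0$ is a worthwhile verification of the definition.
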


 It is well-known that the Lyndon words index a basis for free Lie algebras. Lalonde studied Lyndon heaps which are free partially commutative analogous of Lyndon words.
 In \cite[Section 4]{MR1235180}, he proved the following result.
 \begin{thm}\label{lalonde}
 	Let $\mathcal{L}_{\bold k}(G)$ be the $\bold k$ grade space of the free partially commutative Lie algebra $\mathcal{L}(G)$, then
 	\begin{equation}
 	\dim \mathcal{L}_{\bold k}(G) = \text{ number of Lyndon heaps of weight $\bold k$}.
 	\end{equation}
 \end{thm}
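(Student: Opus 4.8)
The plan is to compute the $\mathbb{Z}_+^n$-graded Hilbert series of the universal enveloping algebra $U(\mathcal{L}(G))$ in two different ways and to match the two resulting product expansions. First I would identify $U(\mathcal{L}(G))$ with the free partially commutative associative algebra on $I$: since $\mathcal{L}(G)=\mathcal{FL}(I)/J$ with $J$ generated by the brackets $[i,j]$ for non-edges, its enveloping algebra is the tensor algebra $T(I)$ modulo the two-sided ideal generated by $ij-ji$ for $\{i,j\}\notin E(G)$, i.e. the monoid algebra of the heap monoid on $I$. Under this identification the set of all heaps over $I$ is a weight-graded linear basis of $U(\mathcal{L}(G))$, so writing $x^{\bold k}=x_1^{k_1}\cdots x_n^{k_n}$, the Hilbert series of $U(\mathcal{L}(G))$ equals $\sum_{E}x^{\wt E}$, summed over all heaps $E$.

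On the algebraic side, the Poincar\'e--Birkhoff--Witt theorem applied to a homogeneous basis of $\mathcal{L}(G)$ gives the factorization $\sum_{E}x^{\wt E}=\prod_{\bold k\neq 0}(1-x^{\bold k})^{-\dim \mathcal{L}_{\bold k}(G)}$. On the combinatorial side I would, following Lalonde, introduce a total order on heaps refining the weight grading and define a \emph{Lyndon heap} by a self-minimality condition; a Lyndon heap is in particular a pyramid in the sense of this paper. The key combinatorial input is a unique factorization theorem, the heap analogue of Chen--Fox--Lyndon: every heap $E$ factors uniquely as a non-increasing product $E=L_1L_2\cdots L_r$ of Lyndon heaps with $L_1\ge L_2\ge\cdots\ge L_r$. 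This factorization yields the second product expansion $\sum_{E}x^{\wt E}=\prod_{L\text{ Lyndon}}(1-x^{\wt L})^{-1}=\prod_{\bold k\neq 0}(1-x^{\bold k})^{-\ell_{\bold k}}$, where $\ell_{\bold k}$ denotes the number of Lyndon heaps of weight $\bold k$.

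Finally I would compare the two product formulas for the same series. Taking the formal logarithm of both sides and comparing coefficients of each monomial $x^{\bold k}$ (equivalently, a Witt-type M\"obius inversion over the grading lattice $\mathbb{Z}_+^n$) shows that the exponents in such an Euler product are uniquely determined by the series, forcing $\dim \mathcal{L}_{\bold k}(G)=\ell_{\bold k}$ for every $\bold k$, which is the assertion. I note that the same unique factorization, combined with a bracketing of each Lyndon heap along its standard factorization, in fact produces an explicit basis of $\mathcal{L}(G)$ indexed by Lyndon heaps; this stronger statement would follow from an additional triangularity lemma identifying the leading heap of each bracketed Lyndon heap with the heap itself.

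The main obstacle is the unique factorization theorem for heaps. Because of the partial commutativity, a heap has no linear suffix structure, and the notions of ``proper factor'' and of comparison of a heap with its rotations must be phrased in terms of the underlying poset of pieces rather than a sequence of letters. Setting up a total order on heaps that is simultaneously compatible with the heap product and well behaved under passage to Lyndon factors, and then proving existence and uniqueness of the non-increasing Lyndon factorization in this setting, is where essentially all the work lies; once this is in hand, both product expansions and the concluding M\"obius inversion are formal.
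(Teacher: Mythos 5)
Your proposal is correct, but there is nothing in the paper to compare it against line by line: the paper does not prove Theorem \ref{lalonde} at all, it quotes it from Lalonde, exactly as it quotes the unique Lyndon factorization of heaps (Proposition \ref{lyndonfact}). The honest comparison is therefore with Lalonde's own argument, and your route is genuinely different. Lalonde proves the stronger statement: he assigns to each Lyndon heap a Lie bracketing via its standard factorization, establishes a triangularity lemma identifying the leading heap of the bracketed element with the heap itself, and concludes that these elements form a basis of each $\mathcal{L}_{\bold k}(G)$; the dimension formula is then a corollary. Your argument bypasses any basis construction: the identification $U(\mathcal{L}(G))\cong T(I)/(ij-ji:\{i,j\}\notin E(G))$ (valid because $U$ is left adjoint to the forgetful functor, so it sends the Lie-ideal quotient to the corresponding associative quotient), the resulting heap basis of the enveloping algebra, the graded PBW product $\prod_{\bold k\neq 0}(1-x^{\bold k})^{-\dim\mathcal{L}_{\bold k}(G)}$, the second Euler product from the non-increasing Lyndon factorization (non-increasing factorizations are exactly multisets of Lyndon heaps), and the uniqueness of exponents via logarithm and M\"obius inversion are all sound, so the proof is complete given Proposition \ref{lyndonfact}. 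What your approach buys is a short, purely generating-function proof of precisely the numerical statement asserted; what it gives up is the explicit Lyndon-heap basis, which is the real content of Lalonde's theorem and which you correctly note would require the additional triangularity lemma. Since the sole nontrivial input you need, the unique factorization theorem, is a result the paper itself assumes by citation, your derivation is a legitimate proof of Theorem \ref{lalonde} from the paper's own standing hypotheses.
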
 
 This theorem shows the natural connection between heaps of pieces and the free partially commutative Lie algebras.  In this direction, we want to address the following question: Every Lyndon heap is a pyramid and hence, because of Theorem \ref{lalonde}, one naturally asks for the relation between the number of pyramids and the dimensions of grade spaces of free partially commutative Lie algebras. We answer this question in the following proposition whose proof is given in Section \ref{free}. 
 \medskip
 \begin{prop}\label{pl}
 	Let $\mathcal{P}_{\bold k}(I,\zeta)$ be the set of pyramids of weight $\bold k$, then
$$|\mathcal{P}_{\bold k}(I,\zeta)| = \sum_{l | \bold k} \frac{\htt \bold k}{l} \dim \mathcal{L}_{\frac{\bold k}{l}}(G).$$
In particular, if $\bold k$ is relatively prime then
$|\mathcal{P}_{\bold k}(I,\zeta)| = (\htt \bold k) \dim \mathcal{L}_{\bold k}(G).$
 \end{prop}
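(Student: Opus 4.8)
The plan is to compute the weight generating function of all heaps,
\[
H(\bold x) \,=\, \sum_{E \in \mathcal{H}(I,\zeta)} \bold x^{\wt E} \ \in\ \mathbb{Q}[[x_1,\dots,x_n]],
\]
in two different ways and to compare the coefficient of $\bold x^{\bold k}$ in each. The single observation that ties everything together is that each piece of a heap is a single vertex of $G$, so a heap of weight $\bold w$ has exactly $\htt \bold w$ pieces; in particular, for a pyramid $P$ one has $|P| = \htt(\wt P)$, where $|P|$ denotes the number of pieces of $P$.

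First I would apply the pyramid proportionality lemma (the weighted form of Viennot's fundamental lemma relating heaps and pyramids), which expresses the logarithm of $H$ as a sum over all pyramids,
\[
\log H(\bold x) \,=\, \sum_{P \text{ a pyramid}} \frac{\bold x^{\wt P}}{|P|}.
\]
Grouping the pyramids by weight and substituting $|P| = \htt(\wt P)$, the coefficient of $\bold x^{\bold k}$ in $\log H(\bold x)$ is exactly $|\mathcal{P}_{\bold k}(I,\zeta)|\big/\htt \bold k$.

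Next I would compute the same logarithm from Lalonde's unique factorization of heaps: every heap is uniquely a non-increasing product of Lyndon heaps. Passing to the commutative weight algebra, such a non-increasing sequence amounts to assigning a multiplicity $m_L \ge 0$ to each Lyndon heap $L$ (almost all zero), so the factorization specializes, under $E \mapsto \bold x^{\wt E}$, to a product of geometric series,
\[
H(\bold x) \,=\, \prod_{L \text{ Lyndon}} \frac{1}{1 - \bold x^{\wt L}}.
\]
Taking logarithms and expanding gives $\log H(\bold x) = \sum_{L}\sum_{m \ge 1} \tfrac1m \, \bold x^{\,m\,\wt L}$, so the coefficient of $\bold x^{\bold k}$ equals $\sum_{l \mid \bold k} \tfrac1l \,\#\{\text{Lyndon heaps of weight } \bold k/l\}$, where $l=m$ runs over the positive integers with $\bold k/l \in \mathbb{Z}_+^n$. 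By Theorem~\ref{lalonde} this is $\sum_{l \mid \bold k}\tfrac1l \dim \mathcal{L}_{\bold k/l}(G)$.

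Equating the two expressions for the coefficient of $\bold x^{\bold k}$ and clearing the denominator $\htt \bold k$ yields
\[
|\mathcal{P}_{\bold k}(I,\zeta)| \,=\, \sum_{l \mid \bold k} \frac{\htt \bold k}{l}\, \dim \mathcal{L}_{\bold k/l}(G),
\]
which is the assertion; the ``in particular'' statement follows because $\gcd(k_1,\dots,k_n)=1$ forces $l=1$ to be the only admissible divisor, collapsing the sum to the single term $(\htt \bold k)\dim \mathcal{L}_{\bold k}(G)$. The hard part will not be the bookkeeping but the two power-series identities: the pyramid--logarithm formula must be invoked in its weighted form (this is precisely the role of the pyramid proportionality lemma), and the product formula must be justified by verifying that the non-increasing Lyndon factorization genuinely specializes to the product of geometric series over Lyndon heaps once one records only weights. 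With both identities in hand, the proposition is immediate.
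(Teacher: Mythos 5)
Your proof is correct, and it takes a genuinely different route from the paper's. One caveat on terminology first: what you call the ``pyramid proportionality lemma'' is, in the paper's naming, Viennot's second fundamental (logarithmic) lemma, Lemma \ref{loglem}; the paper reserves ``pyramid proportionality'' for the relation \eqref{ppleq} between $|\mathcal{P}_{\bold k}(I,\zeta)|$ and the basis-restricted counts $|\mathcal{P}^{i}_{\bold k}(I,\zeta)|$ --- but the content you invoke is available, just under a different name. Your two evaluations of the coefficient of $\bold x^{\bold k}$ in $\log H$ are both sound: the Euler product $H(\bold x)=\prod_{L}\bigl(1-\bold x^{\wt L}\bigr)^{-1}$ does follow from the unique non-increasing Lyndon factorization (Proposition \ref{lyndonfact}), since a non-increasing sequence of Lyndon heaps is exactly the same data as a multiset of Lyndon heaps and weight is additive under superposition; Theorem \ref{lalonde} then converts Lyndon-heap counts into dimensions, and no circularity enters because both inputs are external results of Viennot and Lalonde. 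The paper argues quite differently: it proves the finer, basis-refined identity \eqref{pleq}, namely $|\mathcal{P}^{j}_{\bold k}(I,\zeta)|=\sum_{l\mid\bold k}\frac{\bold k(j)}{l}\,|\mathcal{LH}_{\bold k/l}(I,\zeta)|$, by a bijective cycle-lemma argument --- every pyramid with basis $\alpha_i$ is $p^{l}$ for a unique non-periodic pyramid $p$, the factorization of $p$ into elementary pyramids admits cyclic rotations all lying in one conjugacy class, and exactly one rotation is Lyndon --- and then aggregates over bases using \eqref{ppleq}. Your generating-function argument buys brevity: no elementary pyramids and no periodicity analysis, since the geometric series absorbs powers of Lyndon heaps automatically. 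The paper's argument buys the refined statement \eqref{pleq} for pyramids with a prescribed basis, which is what the proof of the main theorem actually uses and which your aggregate computation does not recover; indeed, your key identity appears in the paper as Equation \eqref{recursss} in Corollary \ref{recursionmult}, but there it is deduced from \eqref{pleq} rather than proved directly as you do.
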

 
{\em Acknowledgements. The author is grateful to K. N. Raghavan, Sankaran Viswanath, Tanusree Khandai, R. Venkatesh and Xavier Viennot for many helpful discussions and constant support.}

 \section{Pyramid proportionality lemma}\label{ppl}
\subsection{Heaps monoid}\label{basic}
Let $G$ be a graph with a totally ordered vertex set $I = \{\alpha_1,\dots,\alpha_n\}$. 
The \emph{free monoid} on $I$, denoted by $\mathcal{M}(I)$, is totally ordered by the lexicographic order induced from the total order on $I$. We say two elements $i$ and $j$ of $I$ commute if $\{i,j\} \notin E(G)$. We use this commutation relation on $I$ to define an equivalence relation $\eta$ on $\mathcal{M}(I)$: Two words $w_{1}$ and $w_{2}$ in $\mathcal{M}(I)$ are related by $\eta$ if $w_{2}$ is obtained from $w_{1}$ by a sequence of interchanges of adjacent commuting alphabets. The \emph{free partially commutative monoid} associated with $G$, denoted by $\mathcal{M}(I,\eta)$, is defined to be the set of all equivalence classes $\frac{\mathcal{M}(I)}{\eta}$. Note that, $\mathcal{M}(I,\eta)$ has a natural monoid structure induced from the monoid structure on $\mathcal{M}(I)$.

Let $\zeta$ be the concurrency relation complement to the commuting relation $\eta$. A \emph{pre-heap} $E$ over $(I,\zeta)$ is a finite subset of $I \times \{0,1,2, \dots \}$ satisfying, if $(\alpha_1,i),(\alpha_2,j) \in E$ with $\alpha_1 \,\zeta\, \alpha_2$, then $i \ne j$. Each element $(\alpha,i)$ of $E$ is called a basic piece.  If $(\alpha,i) \in E$, we write $\pi(\alpha,i) = \alpha $ (the position of the piece $(\alpha,i)$) and $h(\alpha,i) = i$ (the level of the piece $(\alpha,i)$). A basic piece will be simply denoted by $\alpha$ when we don't need to emphasize on the level. The set $\pi(E)$ is defined to be the set of all positions occupied by the pieces of $E$.
A pre-heap $E$ defines a partial order $\le_E$ by taking the transitive closure of the relation : 
$(\alpha_1,i) \le_E (\alpha_2,j)$ if $\alpha_1 \zeta \alpha_2$ and $i < j$. 
We say that two heaps $E$ and $F$ are \emph{isomorphic} if there exists a position preserving order isomorphism $\phi$ between $(E,\le_E) $ and $(F,\le_F)$. 
A \emph{heap} $E$ over $(I,\zeta)$ is a pre-heap over $(I,\zeta)$ such that: if $(\alpha,i) \in E$ with $i > 0$ then there exists $(\beta,i-1) \in E$ such that $\alpha \zeta \beta$.
Every isomorphism class of pre-heaps contains exactly one heap and this is the unique pre-heap $E$ in the class  for which $\sum_{\alpha \in E}h(\alpha)$ is minimal. 	The pictorial representation of heaps can be seen in \cite{MR11108,viennot-imsc2,viennot-Talca,viennot-imsc5}.

Let $\mathcal{H}(I,\zeta)$ be the set of all heaps over $(I,\zeta)$. This set can be made into a monoid with a product called \emph{superposition} of heaps. To get superposition $E \circ F$ of $F$ over $E$, let the heap $F$ `fall' over $E$. Let $\mathcal{H}_{\bold k}(I,\zeta)$ be the set of all heaps of weight $\bold k$ for $\bold k \in \mathbb{Z}_{+}^{n}$ where the weight counts the number of pieces in each of the positions. This gives a $\mathbb{Z}_{+}^{n}$-gradation on $\mathcal{H}(I,\zeta)$.
We define a map $\psi : \mathcal{M}(I) \rightarrow \mathcal{H}(I,\zeta)$ as follows:  For a word $p_1\,p_2\, \cdots \,p_k \in \mathcal{M}(I)$ define $\psi(p_1\,p_2\, \cdots \,p_k) = p_1 \,\circ\, p_2 \,\circ \cdots \circ\, p_k $. Note that $\psi^{-1}(E)$ is the set of all linear orders compatible with $\le_E$. It is clear that $\psi$ extends to a weight preserving isomorphism of the monoids $\mathcal{M}(I,\eta)$ and $\mathcal{H}(I,\zeta)$.  
\subsection{Pyramids and Lyndon heaps}
 	For a heap $E$, $\min E$ is the heap composed of minimal pieces of $E$ with respect to $\le_E$ and $\max E$ is defined similarly. We write $|E|$ for the number of pieces in $E$ and $|E|_{\alpha}$ for the number of pieces of $E$ in the position $\alpha$. A heap $E$ such that $\min (E) = \{ \alpha \}$ is said to be a \emph{pyramid} with basis $\alpha$. The set of all pyramids in $\mathcal{H}(I,\zeta)$ is denoted by $\mathcal{P}(I,\zeta)$ and the set of all pyramids with basis $\alpha_i$ is denoted by $\mathcal{P}^{i}(I,\zeta)$. We note that if $E$ is a pyramid then $\pi(E)$ is a connected subset of $I$. For a heap $E$, we define $\st(E) := \max \psi^{-1}(E)$ the standard word associated to $E$. For two heaps $E,F$ we say $E \le F$ if $\st(E) \le \st(F)$. This defines a total order in the heaps monoid $\mathcal{H}(I,\zeta)$. \emph{For the rest of this paper we fix this total order in $\mathcal{H}(I,\zeta)$}.
Let $E$ be a heap, we say that $E$ is \emph{periodic} if there exists a heap $F \ne 0$ (0 - empty heap) and an integer $k \ge 2$ such that $E = F^k$.
Similarly, $E$ is \emph{primitive} if $E = U \circ V = V \circ U$ then either $U = 0$ or $V = 0.$
	Pyramids in which the minimum piece has the lowest position (with respect to the total order on $I$) are known as \emph{admissible pyramids}. 
	A heap $E$ in $\mathcal{H}(I,\zeta)$ is said to be \emph{multilinear} if every basic piece occurs exactly once in $E$. The set of all multilinear heaps of $G$ is denoted by $\mathcal{H}_{\bold 1}(I,\zeta)$.  
	An admissible pyramid which is also multilinear is known as a \emph{super letter}.

	Let $E$ be a heap. If $E$ = $U \circ V$ for some heaps $U$ and $V$, we say that $V \circ U$ is a \emph{transpose} of $E$. The transitive closure of transposition is an equivalence relation on $\mathcal{H}(I,\zeta)$, which we call the conjugacy relation of heaps and is denoted by $\sim$.    
	A non-empty heap $E$ is said to be \emph{Lyndon} if $E$ is primitive and minimal in its conjugacy class. We write $\mathcal{LH}(I,\zeta)$ for the set of all Lyndon heaps over $G$.
	It follows immediately that super letters are Lyndon. We have the following important proposition from \cite[Proposition 2.1.10]{lalonde}.
\begin{prop} \label{lyndonfact}
	Let $E \in \mathcal{H}(I,\zeta)$ then $E$ factorizes uniquely as $E = L_1 \circ L_2 \circ \cdots \circ L_k$ with $k \ge 0$, $L_i$ are Lyndon and $L_1 \ge L_2 \ge \dots \ge L_k$. This factorization is known as \emph{Lyndon factorization} and we define the Lyndon length of $E$, denoted by $ll(E)$, to be the number of factors occurring in such factorization. 
\end{prop}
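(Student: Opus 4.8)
The plan is to establish the heap analogue of the Chen--Fox--Lyndon factorization theorem, mirroring the classical argument for words but replacing lexicographic order by the total order $\le$ on $\mathcal{H}(I,\zeta)$ (defined via standard words), concatenation by superposition $\circ$, and rotation by the conjugacy relation $\sim$ generated by transposition $U \circ V \mapsto V \circ U$. Throughout I would pass between heaps and words using the weight-preserving isomorphism $\psi$ and the standard-word map $\st(E) = \max \psi^{-1}(E)$, which is injective and hence makes $\le$ a genuine total order.

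For existence, I would first observe that each single basic piece $\alpha$ is a Lyndon heap: it is primitive (one cannot write $\alpha = U \circ V$ with both factors nonempty) and is alone in its conjugacy class. Hence every heap $E$ admits at least one factorization into Lyndon heaps, obtained by writing $E$ as the superposition of its individual pieces. The decisive ingredient is a \emph{Concatenation Lemma}: if $L, L'$ are Lyndon heaps with $L < L'$, then $L \circ L'$ is again Lyndon. Granting this, I would choose among all Lyndon factorizations of $E$ one with the fewest factors. If some adjacent pair satisfied $L_i < L_{i+1}$, the Concatenation Lemma would merge them into a single Lyndon factor, contradicting minimality; and $L_i = L_{i+1}$ cannot be merged, since $L_i \circ L_i$ is periodic and so fails primitivity. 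As $\le$ is total, minimality therefore forces $L_1 \ge L_2 \ge \cdots \ge L_k$, yielding a non-increasing Lyndon factorization.

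For uniqueness, I would show that the first factor $L_1$ is determined by $E$ alone: it is the $\le$-largest Lyndon heap $L$ that is a left factor of $E$ (i.e. $E = L \circ F$ for some heap $F$, equivalently $L$ is a Lyndon order ideal of $(E,\le_E)$). That $L_1$ is such a left factor is immediate; the content is that every Lyndon left factor $L$ satisfies $L \le L_1$. This is the heap version of the word fact that the largest Lyndon prefix of $w$ coincides with the first block of its Lyndon factorization, and I would prove it by comparing $L$ against the blocks $L_1,\dots,L_k$, using that a Lyndon heap is strictly smaller than each of its nontrivial conjugates (minimality in its conjugacy class, with strictness supplied by primitivity) together with the non-increasing condition $L_1 \ge \cdots \ge L_k$. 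Once $L_1$ is pinned down, I would strip it off and induct on $|E|$, comparing the two factorizations of the remaining heap $F$.

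The hard part will be the Concatenation Lemma, which is exactly where the trace-monoid structure of $\mathcal{H}(I,\zeta)$ departs from the free word case. For $L \circ L'$ with $L < L'$ Lyndon one must verify both primitivity and minimality in the conjugacy class, and this requires controlling how transpositions interact with $\le$ in the presence of commuting pieces that may slide across the boundary between $L$ and $L'$; here the description $\st(E) = \max \psi^{-1}(E)$ is the natural tool for reducing superposition and transposition to comparisons of honest words. I expect minimality-in-conjugacy-class to be the most delicate point, precisely because a transpose of $L \circ L'$ need not respect the split between $L$ and $L'$. An alternative route would be to factor the word $\st(E)$ by the classical Chen--Fox--Lyndon theorem and transport the factorization back through $\psi$, but this demands showing that $\st$ is compatible with Lyndon factorization, which is itself essentially as hard as the Concatenation Lemma.
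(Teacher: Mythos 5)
You have the right skeleton, but the proposal proves none of the statements that carry the actual content. Existence is reduced to your \emph{Concatenation Lemma} ($L < L'$ Lyndon implies $L \circ L'$ Lyndon), and uniqueness additionally needs the companion inequality $L \circ L' < L'$ together with the claim that a Lyndon heap is \emph{strictly} smaller than every other heap in its conjugacy class. None of these is proved; you explicitly defer them as ``the hard part,'' but they \emph{are} the proposition --- once they are granted, the fewest-factors merging argument and the strip-off-$L_1$-and-induct argument are the verbatim Chen--Fox--Lyndon proof and are uncontroversial. Note also that the strictness claim is not ``supplied by primitivity'' as stated: minimality gives $L \le L''$ for every conjugate $L''$, and to upgrade this to $L < L''$ you must show that distinct chains of transpositions starting from a primitive heap cannot return to $L$ itself, i.e.\ that the minimum of the conjugacy class of a primitive heap is attained at a unique heap. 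Primitivity as defined in the paper only forbids $E = U \circ V = V \circ U$ with both factors nonempty, which rules out a \emph{single} transposition fixing $E$, not an arbitrary chain; this is a separate lemma (of the same flavour as Lalonde's Proposition 1.3.6, used elsewhere in the paper) that your sketch silently assumes.

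The reason these lemmas cannot be waved through from the word case is the one you half-identify: the order $E \le F \iff \st(E) \le \st(F)$ is not compatible with superposition, because $\st$ is not multiplicative. For two commuting vertices $a < b$ (i.e.\ $\{a,b\} \notin E(G)$) one has $\st(a \circ b) = ba \ne ab = \st(a)\st(b)$, since pieces of the upper factor slide down past everything they commute with; so the prefix comparisons and left-invariance of lexicographic order that drive the classical proofs of both lemmas do not transport through $\psi$. Likewise, heap left factors are order ideals of $(E, \le_E)$ and two Lyndon left factors need not be comparable by containment, so the ``largest Lyndon left factor'' characterization of $L_1$ requires its own trace-specific argument rather than the word-case prefix argument. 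For comparison: the paper itself offers no proof at all --- it imports the proposition from Lalonde (Proposition 2.1.10) --- and in Lalonde's treatment the genuine work goes exactly into the heap--word bridge you set aside in your final paragraph, namely establishing how $\st$ interacts with conjugation, primitivity, and the Lyndon property before the factorization can be transported or proved directly. So the verdict is: correct high-level plan, matching the standard development in the literature, but with the central lemma(s) asserted rather than proved, the attempt has a genuine gap.
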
 
The following lemmas are immediate.
\begin{lem}
	Every Lyndon heap E is an admissible pyramid.
\end{lem}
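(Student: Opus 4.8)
The plan is to argue by contrapositive through the standard word, using the description of $\st(E)$ as the lexicographically largest linear extension of the poset $(E,\le_E)$. First I would record the greedy ``peeling'' identity: the first letter of $\st(E)$ is $\pi(\alpha)=\max\pi(\min E)$, where $\alpha$ is the minimal piece of largest position, and moreover $\st(E)=\pi(\alpha)\,\st(E\setminus\alpha)$, since building the lex-maximal extension greedily forces the first choice to be the largest source and then proceeds on the remaining heap $E\setminus\alpha$. With this in hand the statement splits into two claims: that a Lyndon heap has a unique minimal piece (it is a pyramid), and that this piece sits at the lowest position occurring in $E$ (admissibility).

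For the pyramid claim I would assume $E$ is primitive with $|\min E|\ge 2$ and derive a contradiction. Let $\alpha$ be the minimal piece of largest position $M=\max\pi(\min E)$; since $E$ is not a pyramid there is another minimal piece at a position $<M$. If $\alpha$ is also maximal, i.e.\ no piece of $E$ lies at a position concurrent with $M$, then $\alpha$ commutes with every other piece and $E=\alpha\circ(E\setminus\alpha)=(E\setminus\alpha)\circ\alpha$ with both factors nonzero, contradicting primitivity. Otherwise $\alpha$ is covered by some piece, so in the transpose $F=(E\setminus\alpha)\circ\alpha$ the piece $\alpha$ falls strictly above a concurrent piece and is no longer minimal; one then compares $\st(F)$ with $\st(E)$ using the peeling identity and iterates (inducting on $|E|$), the point being that a heap which is minimal in its conjugacy class cannot carry two independent minimal pieces.

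For admissibility I would mimic the classical fact that a Lyndon word begins with its least letter. Let $E$ be a pyramid with basis $\alpha$ at position $a$, so $\st(E)$ begins with $a$, and suppose some piece occupies a position $b<a$; equivalently $a\neq\min\pi(E)$. Choosing a $b$-piece of least level and transposing $E$ so as to bring it down to the bottom produces a conjugate whose standard word begins with $b<a$, hence is lexicographically smaller than $\st(E)$, contradicting the minimality of $E$ in its conjugacy class. Thus the basis is forced to the smallest position of $E$, which is exactly admissibility.

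The main obstacle is the bookkeeping hidden in ``compares $\st(F)$ with $\st(E)$ and iterates'': transposing a minimal piece upward can momentarily \emph{uncover} a piece of even larger position and so raise the leading letter, so a single transposition need not decrease $\st$. This is handled by performing the reduction inductively on $|E|$ (peeling via the identity $\st(E)=\pi(\alpha)\,\st(E\setminus\alpha)$) rather than in one step, or simply by invoking the structure theory of Lalonde; indeed the statement is his, and in the totally non-commuting case it specializes to the classical characterization of Lyndon words as the primitive words minimal under rotation, which begin with their least letter and are trivially pyramids. This is why the lemma is ``immediate'' given the preceding setup.
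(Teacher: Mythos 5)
Your peeling identity for the standard word (first letter of $\st(E)$ equals $\max\pi(\min E)$, and $\st(E)=\pi(\alpha)\,\st(E\setminus\alpha)$) is correct, and so is Case A of your pyramid argument (a piece that is both minimal and maximal commutes with all other pieces, contradicting primitivity). Beyond that, both halves of your proof have genuine gaps, and they are not bookkeeping. In Case B you never produce a smaller conjugate: the sentence ``a heap which is minimal in its conjugacy class cannot carry two independent minimal pieces'' is exactly the statement to be proved, and the fallback of ``invoking the structure theory of Lalonde'' is circular, since this lemma \emph{is} part of that structure theory. The failure of the single-piece transposition is real. Take $I=\{1,2,3,4\}$ with edges $\{2,3\},\{2,4\},\{1,4\}$ and the primitive heap $E$ with pieces $(1,0),(2,0),(3,1),(4,1)$, so $\min E=\{(1,0),(2,0)\}$, $\alpha=(2,0)$, $\st(E)=2314$. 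The transpose $F=(E\setminus\alpha)\circ\alpha$ has pieces $(1,0),(3,0),(4,1),(2,2)$ and $\st(F)=3142>\st(E)$; the conjugate actually witnessing non-minimality is $\psi(1423)$, coming from the decomposition $E=U\circ V$ with $U=\{(2,0),(3,1)\}$, $V=\{(1,0),(4,1)\}$, i.e.\ one must transpose an order ideal containing $\alpha$ \emph{together with a piece above it}. No iteration of your move finds this, and your proposed induction on $|E|$ is never formulated (what statement is inducted on, and why would peeling preserve minimality in the conjugacy class?).

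The admissibility half contains an outright false step rather than an acknowledged one. With the same graph, let $E$ be the pyramid with pieces $(2,0),(3,1),(4,1),(1,2)$, basis at position $a=2$, $\st(E)=2431$, and let the chosen piece at position $b=1$ be $(1,2)$, whose up-set is $\{(1,2)\}$. Transposing it to the bottom gives the heap with pieces $(1,0),(2,0),(3,1),(4,1)$: since position $1$ commutes with the basis position $2$, the basis also drops to level $0$, and the new standard word is $2314$, which begins with $a=2$, not with $b=1$. (It is still smaller than $2431$, but only at the second letter, so your stated reason fails and the needed comparison is again the unproved induction from the first half.) For comparison, the paper offers no argument at all --- it declares the lemma immediate, deferring to Lalonde's theory; the clean route behind that word is Lalonde's theorem that $E$ is Lyndon exactly when $\st(E)$ is a Lyndon word, after which your peeling identity gives $\max\pi(\min E)=\min\pi(E)$ (a Lyndon word begins with its least letter), forcing uniqueness of the minimal piece and admissibility simultaneously. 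That equivalence is precisely the combinatorial content your proposal postpones and never supplies, so the proposal is not a proof.
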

\begin{lem}\label{iff}
	A multilinear heap $E$ is $Lyndon$ if and only if $E$ is an admissible pyramid.
\end{lem}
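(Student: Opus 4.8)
The plan is to treat the two implications separately, reducing each to material already in place. For the forward direction I would simply invoke the preceding lemma: every Lyndon heap is an admissible pyramid, and multilinearity plays no role there. So all the content lies in the converse, and my strategy is to show that a multilinear admissible pyramid is both \emph{primitive} and \emph{minimal in its conjugacy class}, which is exactly the definition of Lyndon. Observe that a multilinear admissible pyramid is by definition a \emph{super letter}, and it was already noted that super letters are Lyndon; the argument I give is precisely the justification of that remark.

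Primitivity I would get immediately from multilinearity: if $E = F^k$ with $k \ge 2$ and $F \neq 0$, then every position occurring in $F$ would occur at least $k \ge 2$ times in $E$, contradicting that each basic piece occurs exactly once. Hence $E$ is primitive. For minimality in the conjugacy class, I would set $\alpha_0 := \min \pi(E)$, the smallest occupied position. Since $E$ is an admissible pyramid, its unique minimal piece sits at $\alpha_0$, so every linear order compatible with $\le_E$ begins with $\alpha_0$; in particular $\st(E)$ begins with $\alpha_0$. Now take any $F \sim E$ with $F \neq E$. Conjugation preserves weight, so $F$ is again multilinear with $\pi(F) = \pi(E)$, and $\alpha_0$ occurs exactly once in $F$. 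The word $\st(F) = \max \psi^{-1}(F)$ begins with the largest minimal piece of $F$; as $\alpha_0$ is the smallest position present, $\st(F)$ can begin with $\alpha_0$ only when $\alpha_0$ is the unique minimal piece of $F$, i.e. only when $F$ too is an admissible pyramid with basis $\alpha_0$.

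The remaining point is that $E$ is the only such heap in its conjugacy class: because each position occurs exactly once, the class admits exactly one representative in which the smallest position $\alpha_0$ is the unique source. Granting this, for every $F \neq E$ the word $\st(F)$ begins with a position strictly larger than $\alpha_0$, so $\st(E) < \st(F)$ and hence $E \le F$. Thus $E$ is minimal (indeed strictly minimal) in its conjugacy class, and together with primitivity this makes $E$ Lyndon, completing the converse.

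The step I expect to carry the weight is this last uniqueness claim, that a multilinear heap has a single admissible pyramid with basis $\alpha_0$ in its conjugacy class; everything else is bookkeeping with the definitions of $\st$ and of the heap order. It is the heap analogue of the elementary fact that a square-free word has exactly one cyclic rotation beginning with its smallest letter, and it is exactly where multilinearity (each position occurring once) is used in an essential way. I would therefore spend my effort making that statement precise and verifying it, after which the equivalence drops out.
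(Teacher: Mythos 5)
Your forward direction and the overall reduction are fine, but the primitivity step is a genuine error, not merely a shortcut. The paper distinguishes two notions: $E$ is \emph{periodic} if $E=F^k$ for some $F\ne 0$, $k\ge 2$, while $E$ is \emph{primitive} if $E=U\circ V=V\circ U$ forces $U=0$ or $V=0$; ``Lyndon'' is defined using the latter. What you derive from multilinearity is only non-periodicity. Multilinearity alone does \emph{not} give primitivity in the paper's sense: if $G$ consists of two non-adjacent vertices $\alpha,\beta$, the heap $\alpha\circ\beta$ is multilinear, yet $\alpha\circ\beta=\beta\circ\alpha$ is a nontrivial commuting factorization, so it is not primitive (consistently, it is also not a pyramid). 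So this step must use the pyramid hypothesis, and the repair is short: a pyramid has connected support $\pi(E)$ (as noted in the paper); if $E=U\circ V=V\circ U$ with $U,V\ne 0$, multilinearity gives $\pi(U)\cap\pi(V)=\emptyset$, and any edge $\{\alpha,\beta\}$ with $\alpha\in\pi(U)$, $\beta\in\pi(V)$ would give $\alpha<_E\beta$ reading $E$ as $U\circ V$ but $\beta<_E\alpha$ reading $E$ as $V\circ U$, which is absurd; hence no edges join $\pi(U)$ and $\pi(V)$, contradicting connectedness of $\pi(E)$.

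Second, your minimality argument is sound in outline (the first letter of $\st(F)$ is the largest minimal piece of $F$, so any conjugate that is not an admissible pyramid based at $\alpha_0$ is strictly larger than $E$), but it hangs entirely on the uniqueness claim --- that the conjugacy class of a multilinear heap contains at most one admissible pyramid --- which you state, defer, and never prove. Note that the easy argument only handles a single transposition: if $F=V\circ U$ with $\alpha_0\in\pi(U)$ is again a pyramid based at $\alpha_0$, then the minimal pieces of $V$ remain minimal in $V\circ U$, so $\min V\subseteq\{\alpha_0\}$, and since $\alpha_0\notin\pi(V)$ this forces $V=0$. Conjugacy, however, is the \emph{transitive closure} of transposition, and the intermediate heaps in a chain need not be pyramids, so the one-step argument does not close the proof; this uniqueness is exactly the nontrivial fact (the analogue of Lalonde's statement, used later in the paper's pyramid--Lyndon-heap lemma, that a suitable conjugacy class contains a unique Lyndon heap) on which the whole converse rests. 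For comparison, the paper offers no argument at all here: the lemma is recorded as immediate, riding on the earlier remark from Lalonde's theory that super letters are Lyndon --- which is precisely the statement your unproved claim is meant to justify. So as it stands the proposal has a wrong step (primitivity) and an acknowledged but essential gap (uniqueness).
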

 \begin{rem}\label{pyrcon}
 	According to Viennot, a pyramid is a heap with a unique maximal piece \cite[Definition 5.9]{MR11108}. 
 	In this paper, we follow the Lalonde's convention on pyramids \cite[Page 173]{lalonde}, i.e., A pyramid is a heap with a unique minimal piece.
 \end{rem}

\subsection{Pyramid proportionality lemma I} \label{ppl1}
In this subsection, we shall compare the pyramids of weight $\bold k$ over $G$ and multilinear pyramids over $G(\bold k)$. First, we fix some notations.
Let $\bold k = (k_1,k_2,\dots,k_{n})$ and  $\bold m = (m_1,m_2,\dots,m_{n}) \in \mathbb{Z}_{+}^{n}$. We say $\bold k$ is \emph{relatively prime} if its entries are.  
	Define, $\supp \bold k = \{\alpha_i \in I :k_i \ne 0\}$, $\supp_m\bold k= \{\alpha_i,\dots, \alpha_i: \alpha_i \text{ occurs $k_i$ times for }i\in I \}$ and we say that $\bold k$ is connected if $\supp \bold k$ induces a connected subgraph in $G$. We say that $\bold m \,\le\, \bold k$ if $m_i \le k_i $ for each $i \in I$. 
	For $\bold k = (k_1,k_2,\dots,k_n) \in \mathbb{Z}^{n}_{+}$, the symbol $\bold k(i)$ will denote the $i$-th coordinate of $\bold k$. Also, we define $\htt \bold k = \sum\limits_{i=1}^n k_i$
	and $\bold k! = \prod\limits_{i=1}^n k_i!$. 

Fix $\bold k \in \mathbb{Z}_{+}^{n}$. 
The \emph{clan-graph} of weight $\bold k$ associated with $G$, denoted by $G(\bold k)$, is constructed as follows.  
For each $i \in I$, take a clique (complete graph) of size $\bold k(i)$ with vertex set $\{i_1,\dots, i_{\bold k(i)}\}$ and join all the vertices of $r$-th and $s$-th cliques if 
$\{r,s\}\in E(G).$ 
$G(\bold k)$ is also known as the join of $G$ with respect to $\bold k$. Equation \eqref{connection}, gives the relation between the $\bold k$-chromatic polynomial of $G$ and chromatic polynomial of $G(\bold k)$.
The vertex set (resp. the concurrency relation) of $G(\bold k)$ is denoted by $I_{\bold k}$ (resp. $\zeta_{\bold k}$) and $\mathcal{H}(I_{\bold k},\zeta_{\bold k})$ will denote the heaps monoid over $G(\bold k)$. 

We define the indexing map $\mathcal{I} : \mathcal{P}_{\bold k}(I,\zeta)  \rightarrow \mathcal{P}_{\bold 1}(I_{\bold k},\zeta_{\bold k})$ as follows. Let $E$ be an element of $\mathcal{P}_{\bold k}(I,\zeta)$, then $E$ has $\bold k(i)$ many $\alpha_i$ for each $i \in I$. We want to label these $\alpha_i$ with the indices $\{1,2,\dots,\bold k(i)\}$ so that $\alpha_i$s will be changed to $\alpha_{i_1},\alpha_{i_2},\dots,\alpha_{i_{\bold k(i)}}$. Note that, these can be done in $\bold k(i) !$ ways and this indexing procedure will give us $\bold k ! \cdot |\mathcal{P}_{\bold k}(I,\zeta)|$ many heaps in $\mathcal{H}_{\bold 1}(I_{\bold k},\zeta_{\bold k})$. From the definition of $\zeta_{\bold k}$ (concurrency relation of the graph $G(\bold k)$), all these heaps are indeed elements of $\mathcal{P}_{\bold 1}(I_{\bold k},\zeta_{\bold k})$. This defines a $1 \rightarrow \bold k !$
association between  $\mathcal{P}_{\bold k}(I,\zeta)$ and $\mathcal{P}_{\bold 1}(I_{\bold k},\zeta_{\bold k})$ and we denote this association by $\mathcal{I}$. 
Conversely, an element $E$ of $\mathcal{P}_{\bold 1}(I_{\bold k},\zeta_{\bold k})$ has pieces  $\alpha_{i_1},\alpha_{i_2},\dots,\alpha_{i_{\bold k(i)}}$ for each $i \in I$. We forget the indices $1,2,\dots,\bold k(i)$ and rename all these pieces as $\alpha_i$ to get a heap in $\mathcal{P}_{\bold k}(I,\zeta)$. This is a $\bold k ! \rightarrow 1$ map between  $\mathcal{P}_{\bold 1}(I_{\bold k},\zeta_{\bold k})$ and $\mathcal{P}_{\bold k}(I,\zeta)$. We will denote this map by $\mathcal{F}$. This observation leads to the following lemma. 

\begin{lem}\label{eqlem1}
	With the notations as above we have:
	\begin{equation}\label{1eqlem1}
	|\mathcal{P}_{\bold k}(I,\zeta)| = \frac{1}{\bold k !} |\mathcal{P}_{\bold 1}(I_{\bold k},\zeta_{\bold k})|.
	\end{equation}
\end{lem}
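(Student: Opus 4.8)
The plan is to prove \eqref{1eqlem1} by showing that the maps $\mathcal{I}$ and $\mathcal{F}$ described just above the statement are mutually inverse up to the stated multiplicities, so that $\mathcal{I}$ realizes a genuine $1 \to \bold k!$ correspondence from $\mathcal{P}_{\bold k}(I,\zeta)$ onto $\mathcal{P}_{\bold 1}(I_{\bold k},\zeta_{\bold k})$. The conceptual heart of the argument is that the concurrency relation $\zeta_{\bold k}$ of the clan-graph, restricted to the pieces of a relabelled heap, reproduces exactly the concurrency relation $\zeta$ of the original heap. Consequently the underlying partial orders are carried over isomorphically, and the defining property of a pyramid --- a unique minimal piece --- is preserved by both $\mathcal{I}$ and $\mathcal{F}$.

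First I would check that $\mathcal{I}$ is well-defined, i.e.\ that each of the $\bold k!$ relabellings of a fixed $E \in \mathcal{P}_{\bold k}(I,\zeta)$ lands in $\mathcal{P}_{\bold 1}(I_{\bold k},\zeta_{\bold k})$. The $\bold k(i)$ pieces of $E$ sharing a common position $\alpha_i$ are pairwise comparable in $\le_E$ (pieces at the same position stack), hence lie at distinct levels; after relabelling them as $\alpha_{i_1},\dots,\alpha_{i_{\bold k(i)}}$ they form a clique in $G(\bold k)$, so the distinct-level requirement in the definition of a pre-heap over $(I_{\bold k},\zeta_{\bold k})$ holds automatically. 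For pieces at distinct positions $\alpha_i,\alpha_j$, concurrency in $G(\bold k)$ holds precisely when $\{i,j\}\in E(G)$, that is, exactly when $\alpha_i\,\zeta\,\alpha_j$; thus the comparabilities of the relabelled heap coincide with those of $E$, the partial order transports isomorphically, and the unique minimal piece of $E$ becomes the unique minimal piece of the image. Hence each image is multilinear and a pyramid.

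Next I would verify that the $\bold k! = \prod_i \bold k(i)!$ relabellings of a fixed $E$ yield $\bold k!$ distinct elements. Since $\{\alpha_{i_1},\dots,\alpha_{i_{\bold k(i)}}\}$ is a clique in $G(\bold k)$, any heap over $(I_{\bold k},\zeta_{\bold k})$ totally orders these labels; distinct assignments of the labels to the totally ordered pieces of $E$ at position $\alpha_i$ therefore induce distinct total orders on $\{\alpha_{i_r}\}$, giving non-isomorphic heaps (isomorphisms of heaps preserve positions). As the positions $i$ are relabelled independently, this produces $\bold k!$ distinct images, and images of distinct $E,E'$ are disjoint because forgetting the labels recovers the original heap. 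Conversely, $\mathcal{F}$ sends any $E'\in\mathcal{P}_{\bold 1}(I_{\bold k},\zeta_{\bold k})$ to a weight-$\bold k$ heap over $(I,\zeta)$: the copies $\alpha_{i_1},\dots,\alpha_{i_{\bold k(i)}}$ are totally ordered (being a clique), so the merged position $\alpha_i$ inherits a well-defined stacking, and the concurrency relation is again the pullback of $\zeta_{\bold k}$, so the partial order and the unique minimal piece persist; thus $\mathcal{F}(E')\in\mathcal{P}_{\bold k}(I,\zeta)$ and $E'$ is one of the $\bold k!$ relabellings of $\mathcal{F}(E')$.

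Combining these facts, every pyramid in $\mathcal{P}_{\bold 1}(I_{\bold k},\zeta_{\bold k})$ arises from exactly one $E\in\mathcal{P}_{\bold k}(I,\zeta)$ through exactly one of its $\bold k!$ relabellings, whence $|\mathcal{P}_{\bold 1}(I_{\bold k},\zeta_{\bold k})| = \bold k!\,|\mathcal{P}_{\bold k}(I,\zeta)|$, which rearranges to \eqref{1eqlem1}. The step deserving the most care --- and the main obstacle --- is the claim that relabelling neither creates nor destroys order relations, so that in particular forgetting indices returns a valid heap whose minimal piece remains unique. This is precisely where the clique structure within each fibre of the clan-graph is essential: it guarantees that same-position pieces stay totally ordered and that no spurious concurrencies among the copies $\alpha_{i_r}$ are introduced, which is exactly what makes the partial orders on the two sides match.
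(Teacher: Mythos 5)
Your proof is correct and takes essentially the same approach as the paper: the paper's own proof of Lemma \ref{eqlem1} is exactly the $1 \rightarrow \bold k!$ correspondence given by the indexing map $\mathcal{I}$ and the forgetting map $\mathcal{F}$ described in the preceding discussion, which the paper then cites without further detail. Your verification that the clique structure of $G(\bold k)$ makes the concurrency relations, hence the partial orders and the unique minimal piece, match on both sides is precisely the content the paper leaves implicit.
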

Further, for $i \in I$ and $1 \le q \le \bold k(i)$, we have
		\begin{equation}\label{eqlem2}
	|\mathcal{P}_{\bold k}^{i}(I,\zeta)| = \frac{\bold k(i)}{\bold k !} |\mathcal{P}_{\bold 1}^{i_q}(I_{\bold k},\zeta_{\bold k})|.
	\end{equation} 
\begin{pf}
     Equation \eqref{1eqlem1} follows from the above discussion. For proving \eqref{eqlem2},
	let $1 \le q \le \bold k(i)$ be fixed. 
	Let $E$ be an element of $\mathcal{P}_{\bold k}^{i}(I,\zeta)$, then $E$ has basis $\alpha_i$ and has $\bold k(i)$ many $\alpha_i$ in total. We want to label the $\bold k(j)$ many $\alpha_j$ in $E$ with the indices $\{1,2,\dots,\bold k(j)\}$ (for each $j \in I$) such a way that the basis $\alpha_i$ will be changed to $\alpha_{i_{q}}$. This can be done in $\frac{\bold k !}{\bold k(i)}$ many ways and hence the result.
\end{pf}

\subsection{Pyramid proportionality lemma II}

In this subsection, we will study the proportionality of the number of pyramids of weight $\bold k$ with different bases. This is our main proportionality lemma. We start with the following lemma, which is the pyramids proportionality lemma for multilinear heaps.

\begin{lem}\label{mainlem}
	For $1 \le i \le n$, we have
	\begin{equation}\label{1=n}
		|\mathcal{P}_{\bold 1}(I,\zeta)| =  n\,|\mathcal{P}_{\bold 1}^{i}(I,\zeta)|.
	\end{equation}

\end{lem}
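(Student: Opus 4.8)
The plan is to reduce the identity \eqref{1=n} to the single assertion that $|\mathcal{P}_{\bold 1}^{i}(I,\zeta)|$ is independent of $i$. A pyramid has a unique minimal piece, so its basis occupies a well-defined position, and a heap of weight $\bold 1$ occupies every position of $I$ exactly once; hence $\mathcal{P}_{\bold 1}(I,\zeta)=\bigsqcup_{i=1}^{n}\mathcal{P}_{\bold 1}^{i}(I,\zeta)$. Once the cardinalities $|\mathcal{P}_{\bold 1}^{i}(I,\zeta)|$ are known to be equal, summing over the $n$ positions yields \eqref{1=n}. We may assume $G$ is connected, since otherwise no heap of full support $I$ can be a pyramid (pyramids have connected support), and then both sides of \eqref{1=n} vanish.

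The key step will be the following claim: for connected $G$, every conjugacy class (under $\sim$) of multilinear heaps of weight $\bold 1$ contains exactly one pyramid with basis $\alpha_i$, for each $i$. Granting this, the map sending such a pyramid to its conjugacy class is a bijection between $\mathcal{P}_{\bold 1}^{i}(I,\zeta)$ and the finite set of conjugacy classes of weight-$\bold 1$ multilinear heaps. Since the target set is the same for every $i$, all the cardinalities $|\mathcal{P}_{\bold 1}^{i}(I,\zeta)|$ coincide, as required.

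To prove the claim I would first note that, because $G$ is connected, every multilinear heap of weight $\bold 1$ is primitive: a nontrivial commuting factorization $E=U\circ V=V\circ U$ would split the full support $I$ into two nonempty mutually non-concurrent parts, contradicting connectedness. Next comes the crucial maneuver. Re-choose the total order on $I$ so that $\alpha_i$ becomes its least element; the superposition product, and hence the conjugacy relation $\sim$, depend only on $(I,\zeta)$ and are therefore unchanged, whereas the total order on heaps (through $\st$), and with it the notions of \emph{Lyndon} and \emph{admissible}, are now computed with $\alpha_i$ lowest. A conjugacy class $C$ is finite and, by weight and support invariance, consists entirely of primitive multilinear heaps; so its minimum $P$ in the heap order is minimal in its conjugacy class, i.e. Lyndon, and by \lemref{iff} $P$ is an admissible pyramid. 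For weight $\bold 1$ admissibility means exactly that the basis is the least vertex, namely $\alpha_i$. Conversely, any pyramid in $C$ with basis $\alpha_i$ is admissible, hence multilinear Lyndon, hence equal to the unique minimum $P$ of $C$; this gives both existence and uniqueness, and repeating for each $i$ proves the claim.

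The main obstacle, and the point demanding care, is precisely this interplay of order-dependence and order-independence: one must verify that $\sim$, being built solely from the superposition of heaps, is insensitive to the chosen total order on $I$, while \emph{Lyndon} and \emph{admissible} are sensitive to it, so that re-rooting the order at $\alpha_i$ legitimately transports the unique Lyndon representative of each class to the unique basis-$\alpha_i$ pyramid. The remaining checks—that conjugacy classes are finite, that the heap total order restricts to each class with a unique minimum, and that primitivity is preserved under $\sim$ on weight-$\bold 1$ heaps—are routine consequences of the definitions together with \lemref{iff}.
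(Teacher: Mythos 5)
Your proof is correct, and it follows the paper's skeleton up to the decisive step, where it takes a genuinely different route. Both arguments reduce the lemma to showing $|\mathcal{P}_{\bold 1}^{i}(I,\zeta)|$ is independent of $i$, and both use the same trick of re-choosing the total order on $I$ so that $\alpha_i$ becomes the least element, whereupon \lemref{iff} identifies $\mathcal{P}_{\bold 1}^{i}(I,\zeta)$ with the set of multilinear Lyndon heaps. At that point the paper simply invokes Theorem~\ref{lalonde} (Lalonde's theorem), $|\mathcal{LH}_{\bold 1}(I,\zeta)| = \dim\mathcal{L}_{\bold 1}(G)$, and observes that the right-hand side does not depend on the total order, so the counts for all $i$ coincide. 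You instead take as your order-independent anchor the set of conjugacy classes of weight-$\bold 1$ heaps, and prove directly that each class contains exactly one basis-$\alpha_i$ pyramid: connectedness of $G$ forces every weight-$\bold 1$ heap to be primitive (a commuting factorization $U\circ V=V\circ U$ would have disjoint supports by multilinearity, and concurrency of any pair of pieces across the factors would contradict $U\circ V=V\circ U$, so the factorization would disconnect $I$), and then the minimum of each class in the total heap order is the unique Lyndon heap of the class, hence by \lemref{iff} the unique basis-$\alpha_i$ pyramid in it. What your route buys is self-containedness: no input from the theory of free partially commutative Lie algebras is needed, the order-independence that the paper compresses into the phrase ``$\alpha_j$ also has the privilege to be the least element'' is made fully explicit, and the degenerate disconnected case (both sides zero) is handled cleanly. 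What the paper's route buys is brevity, at the price of resting this purely combinatorial statement on Lalonde's dimension theorem.
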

\begin{proof}
	It is enough to prove that, 
	\begin{equation}\label{i=j}
	|\mathcal{P}_{\bold 1}^{i}(I,\zeta)| =  \,|\mathcal{P}_{\bold 1}^{j}(I,\zeta)|
	\end{equation}
	where $1 \le i \le j \le n$. Assume that $\alpha_i$ is the least element in the total order on $I$ then, by Lemma \ref{iff}, the elements of $|\mathcal{P}_{\bold 1}^{i}(I,\zeta)|$ are super letters and hence Lyndon. Now, Theorem \ref{lalonde}  shows that,
	$$|\mathcal{P}_{\bold 1}^{i}(I,\zeta)| = |\mathcal{LH}_{\bold 1}(I,\zeta)| = \dim \mathcal{L}_{\bold 1}(G) . $$
		Since $\alpha_j$ also has the privilege to be the least element in a total order on $I$, we have
	$$|\mathcal{P}_{\bold 1}^{j}(I,\zeta)| = |\mathcal{LH}_{\bold 1}(I,\zeta)| = \dim \mathcal{L}_{\bold 1}(G) . $$
\end{proof}	
\begin{lem}For $1 \le i \le n$, we have
	\begin{equation}\label{ppleq}
	|\mathcal{P}_{\bold k}(I,\zeta)| = \frac{\htt \bold k}{\bold k(i)}\,|\mathcal{P}_{\bold k}^{i}(I,\zeta)|.
	\end{equation}
\end{lem}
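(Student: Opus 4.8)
The plan is to reduce this weighted proportionality to the multilinear proportionality already established in Lemma \ref{mainlem}, by passing to the clan-graph $G(\bold k)$. The crucial observation is that $G(\bold k)$ is a graph on a totally ordered vertex set $I_{\bold k}$ having exactly $\htt \bold k = \sum_{j=1}^n k_j$ vertices. Hence Lemma \ref{mainlem}, applied to $G(\bold k)$ in place of $G$ (with $\htt\bold k$ playing the role of $n$), gives for any single vertex $i_q \in I_{\bold k}$ lying over $\alpha_i$ the identity
\begin{equation*}
|\mathcal{P}_{\bold 1}(I_{\bold k},\zeta_{\bold k})| = (\htt\bold k)\,|\mathcal{P}_{\bold 1}^{i_q}(I_{\bold k},\zeta_{\bold k})|.
\end{equation*}

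Next I would assemble the three ingredients by a short chain of substitutions. Starting from Lemma \ref{eqlem1}, namely $|\mathcal{P}_{\bold k}(I,\zeta)| = \frac{1}{\bold k!}|\mathcal{P}_{\bold 1}(I_{\bold k},\zeta_{\bold k})|$, and inserting the displayed multilinear identity, one obtains $|\mathcal{P}_{\bold k}(I,\zeta)| = \frac{\htt\bold k}{\bold k!}\,|\mathcal{P}_{\bold 1}^{i_q}(I_{\bold k},\zeta_{\bold k})|$. Then, fixing $q$ with $1\le q\le \bold k(i)$ and rewriting Equation \eqref{eqlem2} in the form $|\mathcal{P}_{\bold 1}^{i_q}(I_{\bold k},\zeta_{\bold k})| = \frac{\bold k!}{\bold k(i)}\,|\mathcal{P}_{\bold k}^{i}(I,\zeta)|$, a final substitution yields $|\mathcal{P}_{\bold k}(I,\zeta)| = \frac{\htt\bold k}{\bold k(i)}\,|\mathcal{P}_{\bold k}^{i}(I,\zeta)|$, which is the claim. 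Throughout one tacitly assumes $\alpha_i\in\supp\bold k$, that is $\bold k(i)>0$; otherwise $\mathcal{P}_{\bold k}^{i}(I,\zeta)$ is empty and division by $\bold k(i)$ is not meaningful.

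Since the argument is purely a composition of already-proved equalities, I do not expect a genuine obstacle beyond bookkeeping. The one point deserving care is the legitimacy of applying Lemma \ref{mainlem} to $G(\bold k)$: that lemma is stated for an arbitrary graph on a totally ordered vertex set, so it transfers verbatim once we record that $G(\bold k)$ is such a graph with $\htt\bold k$ vertices and that $i_q$ may be taken to be any one vertex of the clique sitting over $\alpha_i$. The fact that the right-hand side of \eqref{eqlem2} is independent of the choice of $q$ is precisely what makes the two substitutions mutually consistent and the final constant $\frac{\htt\bold k}{\bold k(i)}$ well defined.
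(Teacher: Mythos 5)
Your proof is correct and follows essentially the same route as the paper's own argument: apply the multilinear proportionality lemma to the clan-graph $G(\bold k)$, then combine it with Lemma \ref{eqlem1} and Equation \eqref{eqlem2} via the same chain of substitutions. Your added remarks (the case $\bold k(i)=0$ and the independence of the choice of $q$) are sensible bookkeeping points that the paper leaves implicit.
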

\begin{pf}
		Applying equation \eqref{1=n} to the graph $G(\bold k)$ gives us, 
	$$|\mathcal{P}_{\bold 1}(I_{\bold k},\zeta_{\bold k})| = (\htt \bold k) |\mathcal{P}_{\bold 1}^{i_q}(I_{\bold k},\zeta_{\bold k})|$$ where $1 \le q \le \bold k(i)$. Lemma \ref{eqlem1} gives $$|\mathcal{P}_{\bold k}(I,\zeta)| = \frac{1}{\bold k !} |\mathcal{P}_{\bold 1}(I_{\bold k},\zeta_{\bold k})|$$	and $$|\mathcal{P}_{\bold k}^{i}(I,\zeta)| = \frac{\bold k(i)}{\bold k !} |\mathcal{P}_{\bold 1}^{i_q}(I_{\bold k},\zeta_{\bold k})|.$$
	Combining these equalities we get,
		$$|\mathcal{P}_{\bold k}(I,\zeta)| = \frac{1}{\bold k !} |\mathcal{P}_{\bold 1}(I_{\bold k},\zeta_{\bold k})| = \frac{\htt \bold k}{\bold k !} |\mathcal{P}_{\bold 1}^{i_q}(I_{\bold k},\zeta_{\bold k})| = \frac{\htt \bold k}{\bold k(i)}\,|\mathcal{P}_{\bold k}^{i}(I,\zeta)|.$$
This completes the proof of the lemma.
\end{pf}
We have the following important corollary.
\begin{cor}\label{i=jext} 
	For $1 \le i \le j \le n$, 
	\begin{equation}
			|\mathcal{P}_{\bold k}^{i}(I,\zeta)| = \frac{\bold k(i)}{\bold k(j)}\,|\mathcal{P}_{\bold k}^{j}(I,\zeta)|.
	\end{equation}
\end{cor}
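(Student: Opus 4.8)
The plan is to obtain the identity as an immediate consequence of the pyramid proportionality lemma \eqref{ppleq}, which I may invoke for any choice of basis index. The governing observation is that the total count $|\mathcal{P}_{\bold k}(I,\zeta)|$ of all weight-$\bold k$ pyramids is a single quantity that does not depend on which position we distinguish as the basis; equation \eqref{ppleq} expresses this same total in terms of the basis-$i$ count for \emph{every} admissible $i$, so comparing two such expressions eliminates the total and leaves a relation purely between $|\mathcal{P}_{\bold k}^{i}(I,\zeta)|$ and $|\mathcal{P}_{\bold k}^{j}(I,\zeta)|$.

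Concretely, I would apply \eqref{ppleq} once with the index $i$ and once with the index $j$, giving
\[
|\mathcal{P}_{\bold k}(I,\zeta)| = \frac{\htt \bold k}{\bold k(i)}\,|\mathcal{P}_{\bold k}^{i}(I,\zeta)| = \frac{\htt \bold k}{\bold k(j)}\,|\mathcal{P}_{\bold k}^{j}(I,\zeta)|.
\]
Cancelling the common factor $\htt \bold k$ from the two right-hand expressions and then multiplying through by $\bold k(i)$ yields exactly
\[
|\mathcal{P}_{\bold k}^{i}(I,\zeta)| = \frac{\bold k(i)}{\bold k(j)}\,|\mathcal{P}_{\bold k}^{j}(I,\zeta)|,
\]
as claimed.

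Since the corollary is a one-line algebraic rearrangement of \eqref{ppleq}, there is no genuine obstacle in the derivation itself. The only point requiring care is that all the quantities be well defined: cancelling $\htt \bold k$ presumes $\htt \bold k \ne 0$, and the ratio $\bold k(i)/\bold k(j)$ presumes $\bold k(j) \ne 0$. Both hold in the regime of interest, where the positions $\alpha_i$ and $\alpha_j$ lie in $\supp \bold k$; indeed a pyramid with basis $\alpha_j$ must contain a piece in position $\alpha_j$, so $|\mathcal{P}_{\bold k}^{j}(I,\zeta)| \ne 0$ already forces $\bold k(j) \ne 0$. Thus all the substantive work has already been carried out in establishing the proportionality lemma \eqref{ppleq}, and the corollary merely records its symmetric reformulation.
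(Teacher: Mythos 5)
Your proof is correct and is exactly the argument the paper intends: the corollary is stated immediately after the proportionality lemma \eqref{ppleq}, and the intended derivation is precisely the double application of that lemma (once with basis $i$, once with basis $j$) followed by cancellation of $\htt\bold k$. Your additional remark about the non-vanishing of $\bold k(j)$ and $\htt\bold k$ is a sensible bit of care that the paper leaves implicit.
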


\subsection{Pyramid and Lyndon heap lemma}\label{free}
A pyramid $p$ with basis $\alpha$ is said to be elementary if the minimal piece of $p$ is the only one in the position $\alpha$. i.e., $|p|_{\alpha} = 1$. We write $\mathcal{P}_{e}^{i}(I,\zeta)$ for the set of elementary pyramids with basis $\alpha_i$. Let $\mathcal{P}^*_e(I,\zeta)$ be the submonoid generated by $\mathcal{P}^i_e(I,\zeta)$ in $\mathcal{H}(I,\zeta)$. We have the following proposition from \cite[Proposition 1.3.5]{lalonde}:
\begin{prop}
	$\mathcal{P}^*_e(I,\zeta)$ is freely generated by $\mathcal{P}^{i}_e(I,\zeta)$.
\end{prop}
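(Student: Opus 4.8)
The plan is to prove \emph{unique factorization}: since $\mathcal{P}^*_e(I,\zeta)$ is by definition the submonoid generated by $\mathcal{P}^i_e(I,\zeta)$, the claim that it is \emph{freely} generated by $\mathcal{P}^i_e(I,\zeta)$ amounts to saying that the canonical surjection from the free monoid on $\mathcal{P}^i_e(I,\zeta)$ onto $\mathcal{P}^*_e(I,\zeta)$ is an isomorphism, i.e. every $E\in\mathcal{P}^*_e(I,\zeta)$ has a \emph{unique} expression $E=p_1\circ\cdots\circ p_k$ with each $p_j\in\mathcal{P}^i_e(I,\zeta)$. First I would record the bookkeeping that pins down the number of factors. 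By definition each generator has exactly one piece at the fixed position $\alpha_i$ (its basis), so in any factorization $E=p_1\circ\cdots\circ p_k$ the total number of pieces of $E$ at that position is $\sum_{j}|p_j|_{\alpha_i}=k$. Hence $k=|E|_{\alpha_i}$ is determined by $E$ alone; this gives the base case $|E|_{\alpha_i}=0\Rightarrow E=0$ (empty product) and sets up an induction on $|E|_{\alpha_i}$.

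The heart of the argument is to show that the \emph{top} factor $p_k$ is intrinsically recoverable from $E$. Because superposition drops each successive factor on top of the preceding ones and every factor carries exactly one piece at $\alpha_i$, the pieces of $E$ at position $\alpha_i$ occur at strictly increasing levels in the order of the factors; in particular the highest such piece, call it $x$, is the basis of $p_k$. I would then prove the key geometric lemma: writing $E=F\circ p_k$ with $F=p_1\circ\cdots\circ p_{k-1}$, one has $p_k=\{\,y\in E: y\ge_E x\,\}$, the upward closure of $x$ in $(E,\le_E)$. The inclusion $\supseteq$ is immediate, since $p_k$ is a pyramid with basis $x$ and the order within a single factor is preserved in $E$, so every piece of $p_k$ dominates $x$. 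For $\subseteq$ I would use the defining property of superposition, namely that in $E=F\circ p_k$ no piece of the upper heap $p_k$ lies strictly below a piece of the lower heap $F$: walking up a covering chain $x=y_0\lessdot_E y_1\lessdot_E\cdots\lessdot_E y_m=y$, if some $y_t\in p_k$ but $y_{t+1}\in F$ we would have a piece of $p_k$ strictly below a piece of $F$, a contradiction; hence the chain never leaves $p_k$ and $y\in p_k$.

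Since $x$ (the highest $\alpha_i$-piece of $E$), and therefore the set $\{\,y\in E:y\ge_E x\,\}$, depends only on $E$ and the fixed index $i$, the top factor $p_k$ is the same in \emph{every} factorization of $E$. The heaps monoid $\mathcal{H}(I,\zeta)$ is isomorphic, via $\psi$, to the free partially commutative monoid $\mathcal{M}(I,\eta)$ and is therefore cancellative, so from $E=F\circ p_k=F'\circ p_k$ we obtain $F=F'$; moreover $|F|_{\alpha_i}=|E|_{\alpha_i}-1$. By the induction hypothesis $F$ factors uniquely into generators, and prepending the uniquely determined $p_k$ shows $E$ does as well. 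This establishes uniqueness of factorization and hence that $\mathcal{P}^*_e(I,\zeta)$ is freely generated by $\mathcal{P}^i_e(I,\zeta)$.

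I expect the main obstacle to be the geometric lemma of the second paragraph — specifically, justifying cleanly that an upward covering chain started at the topmost $\alpha_i$-piece cannot cross from $p_k$ down into $F$. This rests entirely on the precise meaning of superposition (the upper heap never sinks below the lower one), so I would first isolate and prove the auxiliary statement ``in $E=F\circ H$ no piece of $H$ is $<_E$ a piece of $F$'' and only then apply it; the counting of factors and the cancellation/induction are routine once this lemma is in hand.
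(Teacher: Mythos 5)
The paper does not actually prove this proposition: it imports it verbatim from Lalonde \cite[Proposition 1.3.5]{lalonde}, so there is no internal proof to compare against, and your task was in effect to reconstruct Lalonde's argument. Your reconstruction is correct and is essentially the standard one: the factor count is pinned down by $k=|E|_{\alpha_i}$ (each generator contributes exactly one $\alpha_i$-piece, by elementarity), and the top factor is forced because in $E=F\circ H$ every concurrent pair with one piece in $F$ and one in $H$ has the $F$-piece strictly lower, so any chain in $\le_E$ ascending from a piece of $H$ can never step into $F$; hence $p_k$ is exactly the upward closure in $(E,\le_E)$ of the highest $\alpha_i$-piece, a set depending on $E$ alone. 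Your auxiliary statement is correct as stated: in the word model, a letter of $H$ placed after all letters of $F$ lands at level at least one above every earlier concurrent piece, which is precisely what kills a covering step from $p_k$ into $F$. With the top factor intrinsic, cancellativity plus induction on $|E|_{\alpha_i}$ finishes uniqueness.

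Two small blemishes worth fixing. First, your inclusion labels are swapped: the sentence beginning ``The inclusion $\supseteq$ is immediate'' actually proves $p_k\subseteq\{y\in E: y\ge_E x\}$, and the chain argument proves the reverse containment; the mathematics is right but the $\subseteq/\supseteq$ tags should be interchanged. Second, you invoke cancellativity of $\mathcal{H}(I,\zeta)\cong\mathcal{M}(I,\eta)$ as if it were free of charge; it is true, but it is a genuine classical fact about free partially commutative monoids (Cartier--Foata) and should be cited rather than asserted, or circumvented by observing that once $p_k$ is identified as a subset of $E$, the heap $F$ is recovered as the heap of the complementary pieces with induced structure. Neither point affects the validity of the argument.
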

Let $p$ be a pyramid in $\mathcal{P}^{i}_{\bold k}(I,\zeta)$ then, by the above proposition, we can write  $p = p_1 \circ p_2 \circ\ \dots \circ p_{\bold k(i)}$ where each $p_j \in \mathcal{P}_e^{i}(I,\zeta)$ and this factorization is unique.  
\begin{lem} 
	Let $p$ be a pyramid in $\mathcal{P}^{i}_{\bold k}(I,\zeta)$ with  $p = p_1 \circ p_2 \circ \dots \circ p_{\bold k(i)}$ where each $p_j \in \mathcal{P}_e^{i}(I,\zeta)$ then any cyclic rotation of $p_1 \circ p_2 \circ \dots \circ p_{\bold k(i)}$ is again an element in $\mathcal{P}^{i}_{\bold k}(I,\zeta)$. 
\end{lem}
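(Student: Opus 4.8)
The plan is to isolate a single structural fact and let everything follow from it: \emph{if $A$ and $B$ are pyramids over $(I,\zeta)$ both having basis $\alpha_i$, then $A\circ B$ is again a pyramid with basis $\alpha_i$}. Granting this, the lemma is almost immediate. Each factor $p_j$ lies in $\mathcal{P}_e^{i}(I,\zeta)$ and is, in particular, a pyramid with basis $\alpha_i$; hence any cyclic rotation $p_{s+1}\circ\cdots\circ p_{\bold k(i)}\circ p_1\circ\cdots\circ p_s$ is a superposition of pyramids with basis $\alpha_i$, and therefore a pyramid with basis $\alpha_i$ by the fact above together with the associativity of $\circ$ and an induction on the number of factors.

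To prove the structural fact, I would argue directly from the order-theoretic description of superposition. Recall two features of $A\circ B$: the restriction of the induced order $\le_{A\circ B}$ to the pieces of $A$ (resp.\ of $B$) coincides with $\le_A$ (resp.\ $\le_B$), and no piece of $A$ ever lies above a piece of $B$, since $B$ is dropped on top of $A$. Let $a$ be the unique minimal piece of $A$; by hypothesis $\pi(a)=\alpha_i$. Since $B$ has basis $\alpha_i$ as well, its minimal piece $b_0$ also sits at position $\alpha_i$, and because two pieces at a common position are concurrent and stacked, $b_0$ must land strictly above $a$, giving $a<_{A\circ B}b_0$. As $b_0=\min B$, every piece $b$ of $B$ satisfies $b_0\le_B b$, a relation preserved in $A\circ B$; hence $a<_{A\circ B}b$ for every piece $b$ of $B$, so no piece of $B$ is minimal in $A\circ B$. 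On the other hand, the minimal pieces of $A\circ B$ contributed by $A$ are exactly the minimal pieces of $A$ (the order on $A$ is unchanged), i.e.\ the single piece $a$. Therefore $a$ is the unique minimal piece of $A\circ B$, which is thus a pyramid with basis $\alpha_i$.

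Finally, I would verify the weight. Superposition is weight-additive and its total weight is independent of the order of the factors, so every cyclic rotation of $p_1\circ\cdots\circ p_{\bold k(i)}$ has weight $\wt(p)=\bold k$; combined with the preceding paragraph, this places it in $\mathcal{P}^{i}_{\bold k}(I,\zeta)$. The only genuinely non-formal step is the superposition analysis showing that no piece of $B$ becomes minimal, and this is precisely where the shared basis $\alpha_i$ is used. I would also remark in passing that the reasoning never invokes the cyclic structure: the very same argument shows that \emph{any} reordering of the factors yields an element of $\mathcal{P}^{i}_{\bold k}(I,\zeta)$. The cyclic rotations are singled out here only because each of them is a transpose of $p$, which is the feature needed for the subsequent counting of pyramids up to conjugacy.
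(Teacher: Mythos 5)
Your proof is correct and takes essentially the same approach as the paper's: both show that the rotated superposition has a unique minimal piece, namely the basis of the new bottom factor, by using that every other piece lies above the minimum of its own factor (all of which sit at position $\alpha_i$). Your packaging of this as a two-factor lemma plus induction, phrased via the order-ideal property of superposition, is a clean modularization of the same idea; it also treats more carefully than the paper the pieces that are themselves minima of the later factors (where the paper's inequality $h(\alpha) > h(\alpha_i)$ is vacuous), and your closing remark that the argument applies to arbitrary permutations, cyclicity being needed only for the later conjugacy counting, is accurate.
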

\begin{proof}
	It is sufficient to show that  $q = p_{\bold k(i)} \circ p_1 \circ \dots \circ p_{\bold k(i)-1}$ is in $\mathcal{P}^{i}_{\bold k}(I,\zeta)$. We claim that $\min q = \{\alpha_i\}$. Since $p_{\bold k(i)} \in \mathcal{P}_e^{i}(I,\zeta)$, we have $\alpha_i \in \min q$. Let $\alpha$ be a basic piece different from $\alpha_i$ in $q$, then $\alpha \in p_j$ for some $1 \le j \le \bold k(i)$. Again, $p_j \in \mathcal{P}_e^{i}(I,\zeta)$ and hence $h(\alpha) > h(\alpha_i)$ for $\alpha_i \in \min p_j$. This shows that $\alpha \notin \min q$ and the proof is done.
\end{proof}
The following lemma is the pyramid and the Lyndon heap lemma. We note that Theorem \ref{pl} follows immediately from this lemma.
\begin{lem}
	Let $\bold k$ be an arbitrary element in $\mathbb{Z}_+^{n}$, then 
	\begin{equation}\label{pleq}
	|\mathcal{P}_{\bold k}^{j}(I,\zeta)| = \sum_{l | \bold k} \frac{\bold k(j)}{l} |\mathcal{LH}_{\frac{\bold k}{\ell}}(I,\zeta)|
	\end{equation}
		where $1 \le j \le n$.
\end{lem}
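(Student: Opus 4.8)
The plan is to count $\mathcal{P}_{\bold k}^{j}(I,\zeta)$ by turning each pyramid with basis $\alpha_j$ into a sequence of elementary pyramids and then organizing these sequences into cyclic orbits indexed by Lyndon heaps. First I would record the bijection supplied by the two preceding results: since $\mathcal{P}^*_e(I,\zeta)$ is freely generated by the $\mathcal{P}^{i}_e(I,\zeta)$, a pyramid $p$ of weight $\bold k$ with basis $\alpha_j$ factors uniquely as $p = p_1 \circ \cdots \circ p_{\bold k(j)}$ with each $p_t \in \mathcal{P}^{j}_e(I,\zeta)$. Hence $p \mapsto (p_1,\dots,p_{\bold k(j)})$ is a bijection from $\mathcal{P}_{\bold k}^{j}(I,\zeta)$ onto the set $S$ of sequences of elementary pyramids with basis $\alpha_j$ of total weight $\bold k$; the inverse is superposition, which lands back in $\mathcal{P}_{\bold k}^{j}(I,\zeta)$ because the lowest $\alpha_j$ is forced to be the unique minimal piece. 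Thus $|\mathcal{P}_{\bold k}^{j}(I,\zeta)| = |S|$ and it suffices to count $S$.

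Next I would use the cyclic-rotation lemma just proved: the cyclic group $C_m$ with $m = \bold k(j)$ acts on $S$ by rotating $(p_1,\dots,p_m)$, since each rotation superposes to a pyramid with basis $\alpha_j$ of weight $\bold k$ whose unique elementary factorization is, by uniqueness, exactly the rotated sequence. I would then partition $S$ into $C_m$-orbits. A sequence whose smallest rotation period is $e$ (so $e \mid m$) is the $(m/e)$-fold repetition of an aperiodic block $(p_1,\dots,p_e)$; writing $q = p_1 \circ \cdots \circ p_e$, the associated pyramid is $p = q^{m/e}$, the orbit has size exactly $e$, and $q$ is a pyramid with basis $\alpha_j$ of weight $\wt q = \frac{e}{m}\bold k$ with $|q|_{\alpha_j} = e$. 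Setting $l = m/e = \bold k(j)/e$, we get $\wt q = \bold k/l \in \mathbb{Z}_+^n$, so $l \mid \bold k$, and the orbit has size $e = \bold k(j)/l$.

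The heart of the argument is to match these orbits bijectively with Lyndon heaps. By the conjugacy theory of \cite{lalonde} (a heap is primitive iff it is not a proper power), the block $q$ is primitive precisely because its elementary factorization is aperiodic; hence its conjugacy class is primitive and contains a unique Lyndon heap $L$, necessarily of weight $\bold k/l$. I would send each orbit to this $L$. For injectivity, two orbits mapping to the same $L$ have blocks conjugate to $L$, hence conjugate to each other, hence (again by \cite{lalonde}, cf. Lemma \ref{iff}) cyclic rotations of one elementary factorization, so the orbits coincide. For surjectivity, given $L \in \mathcal{LH}_{\bold k/l}(I,\zeta)$ the position $\alpha_j$ occurs $(\bold k/l)(j) = \bold k(j)/l$ times in $L$, and the same conjugacy theory produces from $L$ exactly $\bold k(j)/l$ pyramids with basis $\alpha_j$ (one per occurrence), which are the cyclic rotations of a single elementary factorization; raising each to the power $l$ gives the single $C_m$-orbit mapping to $L$. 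Thus orbits correspond bijectively to $\bigsqcup_{l\mid \bold k}\mathcal{LH}_{\bold k/l}(I,\zeta)$, the orbit attached to a Lyndon heap of weight $\bold k/l$ having size $\bold k(j)/l$.

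Summing orbit sizes then yields
\begin{equation*}
|\mathcal{P}_{\bold k}^{j}(I,\zeta)| = |S| = \sum_{\text{orbits } O} |O| = \sum_{l \mid \bold k}\ \sum_{L \in \mathcal{LH}_{\bold k/l}(I,\zeta)} \frac{\bold k(j)}{l} = \sum_{l \mid \bold k} \frac{\bold k(j)}{l}\,\big|\mathcal{LH}_{\frac{\bold k}{l}}(I,\zeta)\big|,
\end{equation*}
which is \eqref{pleq}. I expect the main obstacle to be the two structural facts invoked in the third paragraph: that a heap is primitive exactly when its elementary factorization is aperiodic, and that the pyramids with a fixed basis inside a primitive conjugacy class are precisely the cyclic rotations of one elementary factorization (so their number equals the multiplicity of that position). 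Everything else is bookkeeping with the free factorization and a direct orbit count; it is these conjugacy statements about heaps where Lalonde's framework must be brought in with care.
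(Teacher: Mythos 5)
Your proposal uses the same core ingredients as the paper: unique factorization of a basis-$\alpha_j$ pyramid into elementary pyramids, the cyclic-rotation lemma, primitive roots, and the unique Lyndon heap in each primitive conjugacy class; your repackaging of the final count as an orbit count under $C_m$ is cleaner bookkeeping than the paper's fiber count, but it is not a different method. The substantive divergence is that you run the whole argument directly for an \emph{arbitrary} basis $\alpha_j$, and that is exactly where your proof has a gap. Your surjectivity step asserts that for every $l \mid \bold k$ and every $L \in \mathcal{LH}_{\bold k/l}(I,\zeta)$, the conjugacy class of $L$ contains a pyramid with basis $\alpha_j$ (indeed exactly $\bold k(j)/l$ of them, ``one per occurrence''). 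For a non-minimal $j$ this is genuinely nontrivial: a Lyndon heap is an \emph{admissible} pyramid, so $L$ itself is based at the \emph{least} position of its support, and none of the results you quote produces a conjugate of $L$ based at an arbitrary occupied position. The statement is true, but it requires an argument (for instance, repeatedly transposing at the order filter generated by a fixed piece at position $\alpha_j$, using primitivity to guarantee that this filter strictly grows at each step), and it visibly depends on primitivity --- a heap supported on two commuting vertices is conjugate only to itself and to no pyramid whatsoever. Your injectivity step (conjugate primitive pyramids with the same basis are rotations of one elementary factorization) is likewise left as an appeal to ``conjugacy theory''; the paper makes the analogous assertion only for the least element, citing Lalonde, so on that point you are no worse off, but you need it in the stronger, arbitrary-basis form.

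The paper is structured precisely to avoid this extra input: it first proves \eqref{pleq} for $i$ the least element of $I$, where surjectivity is free because the unique Lyndon heap in each relevant class \emph{is} a pyramid with basis $\alpha_i$ (admissibility, cf.\ Lemma \ref{iff}), and it then transfers the count to arbitrary $j$ via the proportionality Corollary \ref{i=jext}, which was already established by the clan-graph reduction of Section \ref{ppl1}. The cheapest repair of your argument is the same two-step route: run your orbit count only for the least element $i$ of $\supp \bold k$, where the map from orbits to Lyndon heaps is evidently surjective, and then invoke Corollary \ref{i=jext} to pass from $|\mathcal{P}_{\bold k}^{i}(I,\zeta)|$ to $|\mathcal{P}_{\bold k}^{j}(I,\zeta)|$ for all $j$. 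Alternatively, keep your uniform-in-$j$ argument but supply the existence proof sketched above (or a precise reference to it in \cite{lalonde}); as written, that step is a hole rather than a citation.
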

\begin{pf}
Let $\bold k$ be an arbitrary element in $\mathbb{Z}_+^{n}$ and assume that $\alpha_i$ is the least element in $I$.  We note that the elements of $\mathcal{P}^i_{\bold k}(I,\zeta)$ can be periodic. Let $q \in \mathcal{P}^i_{\bold k}(I,\zeta)$ 
then, by \cite[Proposition 1.3.6]{lalonde}, there exists a unique non-periodic pyramid $p \in \mathcal{P}^i_{\frac{\bold k}{l}}(I,\zeta)$ such that $q = p^l$.
Let $p = p_1 \circ p_2 \circ\ \dots \circ p_{\frac{\bold k}{l}(i)}$ where each $p_j \in \mathcal{P}_e^{i}(I,\zeta)$. Now, by the previous lemma, any cyclic rotation of $p_1 \circ p_2 \circ\ \dots \circ p_{\frac{\bold k}{l}(i)}$ is again an element in $\mathcal{P}^{i}_{\frac{\bold k}{l}}(I,\zeta)$. Let these cyclic rotations be denoted by $q_1, q_2, \dots, q_{\frac{\bold k}{l}(i)}$ with $q_1 = p$.
Note that, the pyramids $q_1,q_2, \dots, q_{\frac{\bold k}{l}(i)}$ are in the same conjugacy class as one is obtained from other by a sequence of transpositions of heaps. Since $p$ is not periodic, this class has a unique Lyndon heap which is also a pyramid with basis $\alpha_i$. But $q_1,q_2, \dots, q_{\frac{\bold k}{l}(i)}$ are the only pyramids with basis $\alpha_i$ in this class and hence one (and only one) among them has to be a Lyndon heap. 
Hence we have the following equation
$$	|\mathcal{P}^i_{\bold k}(I,\zeta)|=\sum\limits_{\ell|\bold k}\frac{\bold k(i)}{\ell}|\mathcal{LH}_{\frac{\bold k}{\ell}}(I,\zeta)|.$$
Now, because of Corollary \ref{i=jext}, for an arbitrary $1 \le j \le n$ we have
$$	|\mathcal{P}_{\bold k}^{j}(I,\zeta)| = \sum_{l | \bold k} \frac{\bold k(j)}{l} |\mathcal{LH}_{\frac{\bold k}{\ell}}(I,\zeta)|.$$
This completes the proof.
\end{pf}

\section{Proof of the main theorem}\label{heapsec}
In this section, we will prove Theorem \ref{mainthm}. Let $G$ be a simple graph with a totally ordered vertex set $I$ and edge set $E(G)$. We start with the following expression of  the $\bold k$-chromatic polynomial in terms of independent sets in $G$  \cite[Theorem 15]{MR0224505}. 
We denote by $P_k(\bold k,G)$ the set of all ordered $k$--tuples $(P_1,\dots,P_k)$ such that:
\begin{enumerate}
	\item[(i)] each $P_i$ is a non-empty independent subset of $I$, i.e., no two vertices have an edge between them; and
		\item[(ii)] the disjoint union of  $P_1,\dots,P_k$ is equal to the multiset $\{\alpha_i,\dots, \alpha_i: \alpha_i$   occurs $\bold k(i)$ times for $i\in I\}$ 
\end{enumerate} 
Then we have
\begin{equation}\label{defgenchr}\pi^G_\mathbf{k}(q)= \sum\limits_{k\ge0}|P_k(\bold k, G)| \, {q \choose k}.\end{equation}
	
	A heap $E$ is said to be \emph{trivial} if $\alpha,\beta \in \pi(E)$ implies that $\alpha \,\eta\, \beta$. i.e., $h(\alpha) = 0$ for all $\alpha \in E$. The set of all trivial heaps of $G$ is denoted by $\mathcal{TH}(I,\zeta)$. 	We assign the weights $v(\alpha) = e^{-\alpha}$ for each vertex $\alpha$ in $I$ and extend this weight to an arbitrary heap $E$ by $v(E) = \prod_{\alpha \in E}v(\alpha)$. Next, we give the statements of the fundamental lemmas of Viennot for heaps \cite{MR11108, viennot-imsc2}. We remark that the first fundamental lemma of Viennot can be viewed as the heaps theoretic analog of denominator identity for free partially commutative  Lie algebras. In \cite{viennot-imsc2}, these lemmas are proved using the method of involution on heaps.
We consider the algebra of formal power series $\mathcal{A}:=\mathbb{C}[[X_i : i\in I]]$ where $X_i = e^{-\alpha_i}$. For a formal power series $\zeta\in\mathcal{A}$ with constant term 1, its logarithm $\text{log}(\zeta)=-\sum_{k\geq 1}\frac{(1-\zeta)^k}{k}$ is well-defined. 
\medskip
	\begin{lem}\label{invlem}First fundamental lemma or Inversion lemma
			\begin{equation}
		\sum_{E \in \mathcal{H}(I,\zeta) } v(E) = \frac{1}{\sum_{E \in \mathcal{TH}(I,\zeta)}(-1)^{|E|}v(E)}.
		\end{equation}
		
\end{lem}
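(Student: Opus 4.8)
The plan is to establish the identity by verifying directly that the product of the series on the denominator with the series on the left equals $1$ in the ring of formal power series $\mathcal{A}=\mathbb{C}[[X_i:i\in I]]$. Write $F=\sum_{E\in\mathcal{H}(I,\zeta)}v(E)$ and $D=\sum_{T\in\mathcal{TH}(I,\zeta)}(-1)^{|T|}v(T)$. Since there are only finitely many heaps of each weight $\bold k\in\mathbb{Z}_+^n$, both $F$ and $D$ are well-defined elements of $\mathcal{A}$, and $D$ has constant term $1$ (coming from the empty heap), so it is invertible; hence it suffices to verify the formal identity $D\cdot F=1$, which can be checked weight-by-weight.

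First I would expand the product as a double sum over pairs,
\[
D\cdot F=\sum_{T\in\mathcal{TH}(I,\zeta)}\ \sum_{E\in\mathcal{H}(I,\zeta)}(-1)^{|T|}\,v(T)\,v(E),
\]
and use that the weight is multiplicative under superposition, so that $v(T)\,v(E)=v(T\circ E)$. The decisive move is to reindex this double sum by the single heap $P=T\circ E$ (with $T$ at the bottom, $E$ falling on top). Here is the step I expect to require the most care: for a fixed heap $P$, the pairs $(T,E)$ with $T$ trivial, $E$ a heap, and $T\circ E=P$ are in bijection with the subsets of $\min P$. Indeed, each piece of $T$ sits at level $0$ in $T\circ E$ and is therefore a minimal piece of $P$ (a level-$0$ piece can have nothing below it in $\le_P$); conversely $\min P$ is itself a trivial heap, since any two of its pieces must commute — otherwise one would lie strictly below the other in $\le_P$, contradicting minimality — so every subset $S\subseteq\min P$ is trivial, and setting $T=S$, $E=P\setminus S$ recovers $P$ upon superposition. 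I would justify this bijection rigorously from the description of $\le_P$ together with the fact that removing minimal pieces from a heap again yields a heap.

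Granting the bijection, the product regroups as
\[
D\cdot F=\sum_{P\in\mathcal{H}(I,\zeta)}\Big(\sum_{S\subseteq\min P}(-1)^{|S|}\Big)\,v(P),
\]
and the inner alternating sum equals $(1-1)^{|\min P|}$, which vanishes whenever $\min P\neq\emptyset$, i.e. whenever $P$ is non-empty, and equals $1$ for the empty heap. Thus only the empty heap survives and $D\cdot F=v(\emptyset)=1$, as required. This cancellation is precisely the sign-reversing involution referred to in the statement: for $P\neq\emptyset$ one toggles whether the piece of $\min P$ occupying the least position of $\pi(\min P)$ belongs to $T$, which reverses $(-1)^{|T|}$ while preserving $v(T)\,v(E)$ and fixing no pair, leaving $(\emptyset,\emptyset)$ as the unique unpaired term. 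The only genuine obstacle is the careful verification of the correspondence $\{(T,E):T\circ E=P\}\leftrightarrow\{S\subseteq\min P\}$; once superposition and the minimality of level-$0$ pieces are pinned down, the remaining steps are routine.
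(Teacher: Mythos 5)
Your proof is correct, and it is essentially the argument the paper itself defers to: the paper states this lemma without proof, citing Viennot's ``method of involution on heaps,'' and your reindexing of pairs $(T,E)$ with $T\circ E=P$ by subsets of $\min P$, with the binomial cancellation $(1-1)^{|\min P|}$, is exactly that standard involution argument (as you yourself note in the closing paragraph). The one step you rightly flag --- that $S\subseteq\min P$ is automatically trivial and that $S\circ(P\setminus S)=P$ --- is cleanly justified via the monoid isomorphism $\mathcal{H}(I,\zeta)\cong\mathcal{M}(I,\eta)$, since pairwise-commuting minimal pieces can be brought to the front of any word representing $P$.
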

\medskip
\begin{lem}\label{loglem}Second fundamental lemma or Logarithmic lemma
	\begin{equation}
		\text{log}\Big(\sum_{E \in \mathcal{H}(I,\zeta) } v(E)\Big) = \sum_{p \in \mathcal{P}(I,\zeta)}\frac{v(p)}{|p|} .
		\end{equation}
\end{lem}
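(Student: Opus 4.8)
The plan is to bypass the involution method and instead derive the identity from a single weight–preserving bijection, obtained after applying the Euler (degree) operator $\theta := \sum_{i} X_i\,\partial_{X_i}$ to both sides. Write $H := \sum_{E \in \mathcal{H}(I,\zeta)} v(E)$ and $L := \sum_{p \in \mathcal{P}(I,\zeta)} \tfrac{v(p)}{|p|}$. Since $v(E)=\prod_i X_i^{(\wt E)(i)}$ depends only on the weight of $E$, the operator $\theta$ scales each monomial $X^{\bold k}$ by $\htt \bold k$. In particular $\theta$ is injective on the ideal of series with vanishing constant term, and both $\log H$ (legitimate as $H$ has constant term $1$) and $L$ (the empty heap is not a pyramid) lie in that ideal. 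Hence it suffices to prove $\theta \log H = \theta L$. As $\theta$ is a derivation and $H$ is invertible in $\mathcal{A}$, one has $\theta \log H = (\theta H)/H$, so the whole lemma reduces to the single identity
\begin{equation*}
\theta H = H\cdot(\theta L).
\end{equation*}

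Next I would interpret each factor combinatorially. Because $|p| = \htt(\wt p)$ for every pyramid, $\theta L = \sum_{p} \tfrac{|p|}{|p|}\,v(p) = \sum_{p \in \mathcal{P}(I,\zeta)} v(p)$, the full pyramid generating series. On the other side $\theta H = \sum_{E} |E|\,v(E) = \sum_{(E,x)} v(E)$, where the final sum runs over all \emph{pointed heaps}, that is, pairs consisting of a heap $E$ and a distinguished piece $x \in E$. Since $v$ is additive under superposition, the product $H\cdot(\theta L)$ equals $\sum_{(F,p)} v(F\circ p)$, summed over all pairs with $F \in \mathcal{H}(I,\zeta)$ and $p \in \mathcal{P}(I,\zeta)$. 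Thus the identity will follow from a weight–preserving bijection between pointed heaps and such pairs.

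The heart of the argument — and the step I expect to be the main obstacle — is constructing and verifying this bijection, which is entirely order–theoretic. Given a pointed heap $(E,x)$, I would send it to $(F,p)$ where $p := \{\,y \in E : y \ge_E x\,\}$ is the principal filter of $x$ and $F := E \setminus p$. One must check that $x$ is the unique minimal piece of $p$, so that $p$ is a pyramid with basis $\pi(x)$; that $F$ is an order ideal; and, crucially, that $E = F\circ p$ as heaps, the point being that no piece of $p$ lies below a piece of $F$, so the two induced orders superpose without creating new relations. For the inverse, given $(F,p)$ with basis $x$, I would form $E := F\circ p$ and mark the image of $x$; here the key verification is that in $F\circ p$ no piece of $F$ rises above $x$, whence the filter of $x$ recovers exactly $p$ and its complement exactly $F$, so the two constructions are mutually inverse. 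Both maps visibly preserve weight, which yields $\theta H = H\cdot(\theta L)$ and hence the lemma. Once the two delicate order–theoretic facts — that the filter of a single piece is a pyramid, and that the filter/ideal splitting realizes $E$ as a genuine superposition — are established, everything else is formal.
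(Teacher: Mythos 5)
Your proof is correct, but it takes a genuinely different route from the paper, which in fact offers no proof of its own: both fundamental lemmas are quoted from Viennot, with the remark that they are proved in the cited notes by the method of involution on heaps. Your argument is instead the logarithmic-derivative proof: apply the Euler operator $\theta$, use its injectivity on series with vanishing constant term to reduce the lemma to $\theta H = H\cdot\theta L$, and prove that identity by the weight-preserving bijection $(E,x)\leftrightarrow(F,p)$, where $p=\{y\in E: y\geq_E x\}$ and $F=E\setminus p$. The two order-theoretic points you single out as delicate are exactly the right ones, and both hold: any increasing chain starting at a piece of the filter stays inside the filter, so the induced order on $p$ is the restriction of $\leq_E$ and has minimum $x$, making (the normalization of) $p$ a pyramid with basis $\pi(x)$; and since $F$ is an order ideal, every concurrent pair straddling the splitting is oriented from $F$ up to $p$, so the identity map on pieces is a position-preserving order isomorphism between $E$ and $F\circ p$, which forces $E=F\circ p$ because every isomorphism class of pre-heaps contains exactly one heap (a fact the paper records). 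Note that your filter-based splitting is the one adapted to this paper's (Lalonde's) convention that a pyramid has a unique \emph{minimal} piece; under Viennot's maximal-piece convention one would dualize and cut along the ideal below $x$. One wording slip: $v$ is multiplicative, not additive, under superposition, which is what you actually use. As for what each approach buys: the involution method the paper points to handles the inversion and logarithmic lemmas uniformly by sign-reversing cancellation, whereas your proof is self-contained and elementary, needs neither an involution nor the first fundamental lemma, and explains conceptually why pyramids carry the weight $1/|p|$ --- marking one of the $|p|=\htt(\wt p)$ pieces exactly cancels that factor. Be aware, though, that this derivative-plus-pointing argument is the classical proof of the logarithmic lemma in the literature, so it is new relative to this paper but not new in substance.
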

	\begin{rem}
		In \cite{vv}, the authors considered the logarithm of left and right-hand side of the denominator identity of Kac-Moody Lie algebras and calculated certain coefficients
		to get the expression given in Theorem \ref{kac}. The main tool was \cite[Lemma 2.3]{vv}. In our case, this lemma will be replaced by the second fundamental lemma (Lemma \ref{loglem}). By this lemma, the logarithm of the generating function for heaps gives the generating function for pyramids. So we can use the geometric properties of pyramids to prove Theorem \ref{mainthm}. 
	\end{rem}	
\subsection{Proof of \thmref{mainthm}}
We claim that
		 $$\widetilde{\pi}^G_{\bold k}(q) \,=\,\sum\limits_{\bold J \in L_G(\bold k)} \chrmult(\bold J) \,\,q^{|\bold J|} $$
	where $|\bold J|$ denotes the number of parts in $\bold J$.	
	Let 0 denote the trivial heap and let
	\begin{align*}
	N &:= 1 + \sum\limits_{\substack{E \in \mathcal{TH}(I,\zeta) \\ E \ne 0}}(-1)^{|E|} v(E)  \text{,  then} 
	\\  N^q &= \sum\limits_{k \ge 0} \binom{q}{k} \Big(\sum\limits_{\substack{E \in \mathcal{TH}(I,\zeta) \\ E \ne 0}}(-1)^{|E|} ~v(E)\Big)^k.
	\end{align*}	
	Let $\beta(\bold k) := \sum\limits_{\alpha_i \in I} k_i \alpha_i \in \mathbb{Z}[\alpha_1,\alpha_2,\dots,\alpha_n]$ then 
	the coefficient of $e^{-\beta(\bold k)}$ in $N^{q}$ is equal to $(-1)^{\htt \bold k}\sum\limits_{k\ge0}|P_k(\bold k, G)| \, {q \choose k}$.
	We will calculate the same coefficient in another way. The fundamental lemmas of Viennot (Lemmas \ref{invlem} and \ref{loglem}) gives,
	 $$N^q = \text{exp}\, \Big((-q) \sum\limits_{p \in \mathcal{P}(I,\zeta)}  \frac{v(p)}{|p|}\,\Big).$$
	Expanding the right-hand side gives, 
	$$N^q = \sum\limits_{k \ge o}~ \Big(\sum\limits_{p \in \mathcal{P}(I,\zeta)} \frac{v(p)}{|p|}~\Big)^k~ \frac{(-q)^k}{k!}.$$ 
	We want to calculate the coefficient of $e^{-\beta(\bold k)}$ in the above sum, but any pyramid which has weight more than $\bold k$ won't contribute to this coefficient. Hence we can assume
	\begin{equation}\label{eqn}
		N^q = \sum\limits_{k \ge o}~ \Big(\sum\limits_{ \bold m \,\le\, \bold k}\sum\limits_{p \in \mathcal{P}_{\bold m}(I,\zeta)} \frac{v(p)}{|p|}\Big)^k~ \frac{(-q)^k}{k!}.
	\end{equation}

By Lemma \eqref{ppleq} we have,
		$$\,\,\,\,\,\,\,\,\,\, N^q= \sum\limits_{k \ge o}\, \Big(\sum\limits_{ \bold m \,\le\, \bold k }\sum\limits_{p \in \mathcal{P}_{\bold m}^{i}(J,\zeta)} \frac{v(p)}{\bold m(i)}\,\Big)^k\, \frac{(-q)^k}{k!}$$ 
	where $i$ is an element of $J := \supp \bold m$. 
By Equation \eqref{pleq} we have,
$$\,\,\,\,\,\,\,\,\,\, N^q= \sum\limits_{k \ge o}\, \Big(\sum\limits_{ \bold m \,\le\, \bold k} \sum\limits_{l | \bold {m}}\sum\limits_{p \in \mathcal{LH}_{\frac{\bold {m}}{l}}(I,\zeta)} \frac{(v(p))^l}{l}\Big)^k\, \frac{(-q)^k}{k!}.$$ 

	Consider the product $$\frac{1}{k!}\Big(\sum\limits_{ \bold m \,\le\, \bold k} \sum\limits_{l | \bold {m}}\sum\limits_{p \in \mathcal{LH}_{\frac{\bold {m}}{l}}(I,\zeta)} \frac{(v(p))^l}{l}\Big)^k $$
		$$
	=\frac{1}{k!}\sum\limits_{\substack{(\bold {m}_1, \bold {m}_2,\dots,\bold {m}_k) \\  \bold {m}_i \,\le\, \bold k}} \Big(\sum\limits_{l_1 | \bold {m}_1}\sum\limits_{p_1 \in \mathcal{LH}_{\frac{\bold {m}_1}{l_1}}(I,\zeta)}\frac{(v(p))^{l_1}}{l_1} \Big) \cdots \Big(\sum\limits_{l_k | \bold {m_k}}\sum\limits_{p_k \in \mathcal{LH}_{\frac{\bold {m}_k}{l_r}}(I,\zeta)}\frac{(v(p))^{l_k}}{l_k} \Big) $$
		$$
	=\frac{1}{k!}\sum\limits_{\substack{(\bold {m}_1, \bold {m}_2,\dots,\bold {m}_k) \\  \bold {m}_i \,\le\, \bold k }}\sum\limits_{\substack{(l_1,l_2,\dots,l_k) \\ l_i | \bold{m}_i}}\Big(\sum\limits_{p_1 \in \mathcal{LH}_{\frac{\bold {m}_1}{l_1}}(I,\zeta)}\frac{(v(p))^{l_1}}{l_1} \Big) \cdots \Big(\sum\limits_{p_k \in \mathcal{LH}_{\frac{\bold {m}_k}{l_k}}(I,\zeta)}\frac{(v(p))^{l_k}}{l_k} \Big) $$
		$$
	=\frac{1}{k!}\sum\limits_{\substack{(\bold {m}_1, \bold {m}_2,\dots,\bold {m}_k) \\  \bold {m}_i \,\le\, \bold k}}\sum\limits_{\substack{(l_1,l_2,\dots,l_k) \\ l_i | \bold{m_i}}}\sum\limits_{\substack{(p_1,p_2,\dots,p_k) \\ p_i \in \mathcal{LH}_{\frac{\bold {m}_i}{l_i}}(I,\zeta)}} \frac{(v(p_1))^{l_1}}{l_1} \frac{(v(p_2))^{l_2}}{l_2}  \cdots \frac{(v(p_k))^{l_k}}{l_k} . $$
	First, we calculate the coefficient of $e^{-\beta(\bold k)}$ in the above sum. Consider any tuple $\overline{\bold {m}}^{'} = (\bold {m}_{i_1}, \bold {m}_{i_2},\dots,\bold {m}_{i_k})$ which satisfies $\sum\limits_{j =1}^k \bold {m}_{i_j} = \bold k$. We assume that the distinct entries occuring in this $k-$tuple are $\bold {m}_1,\bold {m}_2, \dots, \bold {m}_r$, each $\bold {m}_i$ occurs $m_i$ times and
		$\sum_{i = 1}^r m_i = k$.  We note that, $\overline{\bold {m}}^{'} $ can be permuted in $\frac{k!}{m_1!  m_2! \cdots m_r!}$ many ways and all these tuples will contribute same value to the required coefficient in the above sum. 
			Now, consider another ordered $k$-tuple $\overline {\bold m} = (\bold {m}_1,\dots,\bold {m}_1,\dots,\bold {m}_r,\dots,\bold {m}_r)$ for which the minimum element of support of its entries are non-increasing, each $\bold {m}_i$ occurs $m_i$ times, $\sum\limits_{i =1}^r \sum\limits_{j=1}^{m_i} \bold {m}_i = \bold k$ and $\sum\limits_{j=1}^r m_j = k$.  
		Clearly $\overline{\bold m}$ is a permutation of the tuple $\overline{\bold {m}}^{'} $ and also represents a unique element in $L_G(\bold k)$. We consider only $\overline {\bold m}$ in place of all the permutations of $\overline{\bold {m}}^{'} $.
	 Since, we have $k!$ in the denominator of Equation \eqref{eqn},  this $\overline {\bold m}$ will contribute  $\frac{1}{m_1! \cdot m_2! \cdots m_r!}$ to the coefficient of $e^{-\beta(\bold k)}$.  
	Hence, the required coefficient of $e^{-\beta(\bold k)}$ is equal to
	$$\sum\limits_{k \ge 0}\sum\limits_{\overline{\bold m}  \,\in\, L_G(\bold k,k)}\frac{1}{m_1! \cdot m_2! \cdot \cdots m_r!} \sum\limits_{\substack{(l_1,l_2,\dots,l_k) \\ l_i | \bold{\overline{m}}(i)}}\sum\limits_{\substack{(p_1,p_2,\dots,p_k) \\ p_i \in \mathcal{LH}_{\frac{\bold {\overline{m}}(i)}{l_i}}(I,\zeta)}} \frac{1}{l_1} \frac{1}{l_2}  \cdots \frac{1}{l_k}\,(-q)^k$$
	where $L_G(\bold k,k)$ denotes the partitions in $L_G(\bold k)$ with $k$ parts and  $\overline{\bold m}(i)$ denotes the $i$th entry of $\overline{\bold m}$. 
	Now,
	 \begin{align*}
	 \sum\limits_{\substack{(l_1,l_2,\dots,l_k) \\ l_i | \bold{\overline{m}}(i)}}\sum\limits_{\substack{(p_1,p_2,\dots,p_k) \\ p_i \in \mathcal{LH}_{\frac{\bold {\overline{m}}(i)}{l_i}}(I,\zeta)}} \frac{1}{l_1} \frac{1}{l_2}  \cdots \frac{1}{l_k} 
	 &= \sum\limits_{\substack{(l_1,l_2,\dots,l_k) \\ l_i | \overline{\bold{m}}(i)}} \prod\limits_{i =1}^k\Big(\sum\limits_{p_i \in \mathcal{LH}_{\frac{\overline{\bold {m}}(i)}{l_i}}(I,\zeta)}\frac{1}{l_i}\Big)\\ 	
	 &= \sum\limits_{\substack{(l_1,l_2,\dots,l_k) \\ l_i | \overline{\bold{m}}(i)}} \prod\limits_{i =1}^k \Big(\frac{\dim (\frac{\overline{\bold {m}}(i)}{l_i})}{l_i}\Big) \\ 
	 &= \prod\limits_{i =1}^k \Big(\sum\limits_{l_i | \overline{\bold {m}}(i)}\frac{\dim (\frac{\overline{\bold {m}}(i)}{l_i})}{l_i}\Big)\\
	  &= \Big(\sum\limits_{l_i | \bold {m}_i}\frac{\dim (\frac{\bold {m}_i}{l_i})}{l_i}\Big)^{m_i}.\\
	 \end{align*}
This shows that, the coefficient of $e^{-\beta(\bold k)}$ is equal to	
$$\sum\limits_{k \ge 0}\sum\limits_{\overline{\bold m}  \,\in\, L_G(\bold k,k)} \chrmult(\overline{\bold m})(-q)^k.$$
 
	We have already shown that the coefficient of $e^{-\beta(\bold k)}$ in $N^q$ is equal to $(-1)^{\htt \bold k}\sum\limits_{k\ge1}|P_k(\bold k, G)| \, {q \choose k}$ and hence, 	
	$$(-1)^{\htt \bold k}\sum\limits_{k\ge1}|P_k(\bold k, G)| \, {q \choose k} 	=\sum\limits_{k \ge 0}\sum\limits_{\overline{\bold m}  \,\in\, L_G(\bold k,k)} \chrmult(\overline{\bold m})(-q)^k$$
		and so
	$$\widetilde{\pi}^G_{\bold k}(q) \,=\, \sum\limits_{k \ge 0}\sum\limits_{\overline{\bold m}  \,\in\, L_G(\bold k,k)} \chrmult(\overline{\bold m}) \,q^k.$$
		Equivalently,
 $$\widetilde{\pi}^G_{\bold k}(q) \,=\,\sum\limits_{\bold J \in L_G(\bold k)} \chrmult(\bold J) \,\,q^{|\bold J|}. $$	
This proves Theorem \ref{mainthm} (and hence	
 Theorem \ref{kac}) for the case of free partially commutative Lie algebras. 

We have proved the following expression of the $\bold k$-chromatic polynomial in terms of bond lattice and the root multiplicities of generalized Kac-Moody algebras (also known as Borcherds algebras) in \cite[Theorem 1]{akv}.
\begin{thm}\label{borcherds}
	Let $G$ be the graph of a Borcherds algebra $\lie g$ then, under certain assumptions on real simple roots in the support of $\bold k$, we have
	\begin{equation}\label{borcherdseq}
	\widetilde{\pi}^G_{\bold k}(q)=\sum_{\mathbf{J}\in L_{G}(\bold k)} \prod_{J\in\bold J}\binom{q\text{ mult}(\beta(J))}{D(J,\mathbf{J})}.
	\end{equation}
\end{thm}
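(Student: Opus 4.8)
The plan is to run the same coefficient-extraction argument used for \thmref{mainthm}, but to evaluate the decisive coefficient through the root-space decomposition of the Borcherds algebra $\lie g$ rather than through the free partially commutative Lie algebra. The organizing observation is that the series $N = \sum_{E \in \mathcal{TH}(I,\zeta)}(-1)^{|E|}v(E)$ is a purely graph-theoretic object: its trivial heaps are exactly the independent sets of $G$, so by Equation \eqref{defgenchr} the coefficient of $e^{-\beta(\bold k)}$ in $N^q$ equals $(-1)^{\htt\bold k}\pi^G_{\bold k}(q)$. This identity holds verbatim in the present setting, since it never referred to any Lie algebra; only the \emph{second}, algebraic evaluation of the same coefficient is new.

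First I would invoke the Weyl--Kac--Borcherds denominator identity for $\lie g$. Under the stated hypotheses on the real simple roots lying in the support of $\bold k$, the numerator of that identity --- the alternating sum over the Weyl group twisted by sums of pairwise-orthogonal imaginary simple roots --- agrees, up to and including weight $\beta(\bold k)$, with the independent-set series $N$. Equivalently it yields the product formula
$$N \;=\; \prod_{\alpha \in \Delta^+}\bigl(1-e^{-\alpha}\bigr)^{\mult\,\alpha},$$
where $\mult\,\alpha = \dim\lie g_\alpha$ is the Borcherds root multiplicity. This step is the Borcherds-algebra replacement for Viennot's inversion lemma (\lemref{invlem}): in the free partially commutative world the same $N$ factors with the grade-space dimensions of $\mathcal{L}(G)$, whereas here it factors with the genuine root multiplicities of $\lie g$.

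Next I would raise to the $q$-th power and expand each factor by the generalized binomial theorem, writing $\bigl(1-e^{-\alpha}\bigr)^{q\,\mult\,\alpha} = \sum_{d\ge0}\binom{q\,\mult\,\alpha}{d}(-1)^{d}e^{-d\alpha}$, and then read off the coefficient of $e^{-\beta(\bold k)}$ in $N^q = \prod_\alpha(1-e^{-\alpha})^{q\,\mult\,\alpha}$. A choice of exponents $(d_\alpha)$ with $\sum_\alpha d_\alpha\,\alpha = \beta(\bold k)$ is precisely a multiset partition of $\beta(\bold k)$ into positive roots, i.e. an element $\bold J \in L_G(\bold k)$: each connected part $J$ records a positive root $\beta(J)$, and its occurrence number is $D(J,\bold J) = d_{\beta(J)}$. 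Thus each $\bold J$ contributes the product $\prod_{J \in \bold J}\binom{q\,\mult(\beta(J))}{D(J,\bold J)}$ up to an overall sign; matching the result against $(-1)^{\htt\bold k}\pi^G_{\bold k}(q)$ and performing the substitution $q \mapsto -q$ that converts $\pi^G_{\bold k}$ into $\widetilde{\pi}^G_{\bold k}$ is what should produce the asserted formula.

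The delicate point --- and the main obstacle --- is the denominator-identity reduction of the middle step: one must show that, under the hypotheses on the real simple roots supported in $\bold k$, the Weyl-group alternating sum contributes nothing new at weight $\beta(\bold k)$ and that the orthogonal sums of imaginary simple roots are exactly the independent sets of $G$, so that the numerator really is $N$. A closely related bookkeeping issue is the even/odd (equivalently real/imaginary) nature of the roots appearing in $\bold J$, since this is precisely what fixes the binomial factor as $\binom{q\,\mult(\beta(J))}{D(J,\bold J)}$ rather than its multiset variant $\binom{q\,\mult(\beta(J))+D(J,\bold J)-1}{D(J,\bold J)}$; the stated assumptions on the real simple roots are exactly what force the former. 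Everything else is the routine coefficient comparison already rehearsed in the proof of \thmref{mainthm}.
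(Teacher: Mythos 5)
Your strategy is the correct one, and it coincides with the actual proof of this statement: the present paper never proves Theorem \ref{borcherds} at all --- it is imported from \cite[Theorem 1]{akv} --- and the argument there is exactly what you describe. The coefficient of $e^{-\beta(\bold k)}$ in $N^q$ is $(-1)^{\htt\bold k}\pi^G_{\bold k}(q)$ by Equation \eqref{defgenchr}; the Weyl--Kac--Borcherds denominator identity plays the role of Viennot's inversion lemma; and the hypotheses on real simple roots (in effect, $k_i\le 1$ whenever $\alpha_i$ is real) are used exactly where you put them: Weyl elements that are not products of pairwise-commuting simple reflections produce a weight in which some real simple root occurs with coefficient at least $2$, hence cannot contribute up to weight $\beta(\bold k)$, while the products of commuting reflections, combined with the sums of pairwise-orthogonal imaginary simple roots, reproduce precisely the independent-set series $N$. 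Raising to the $q$-th power and extracting the coefficient via the binomial expansion is then routine, as you say.

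The genuine error is in your final paragraph: the plain-versus-multiset binomial dichotomy has nothing to do with real versus imaginary roots, and the hypotheses on real simple roots cannot ``force the former''. Expanding $\prod_\alpha(1-e^{-\alpha})^{q\,\mult\alpha}$ attaches to each $\bold J$ the sign $(-1)^{\sum_\alpha d_\alpha}=(-1)^{|\bold J|}$, not $(-1)^{\htt\bold k}$, so the coefficient comparison reads
\begin{equation*}
(-1)^{\htt\bold k}\,\pi^G_{\bold k}(q)\;=\;\sum_{\bold J\in L_G(\bold k)}(-1)^{|\bold J|}\prod_{J\in\bold J}\binom{q\,\mult\beta(J)}{D(J,\bold J)},
\end{equation*}
and the substitution $q\mapsto-q$ then unavoidably converts the plain binomials into multiset ones:
\begin{equation*}
\widetilde{\pi}^G_{\bold k}(q)\;=\;\sum_{\bold J\in L_G(\bold k)}\prod_{J\in\bold J}\binom{q\,\mult\beta(J)+D(J,\bold J)-1}{D(J,\bold J)}.
\end{equation*}
A one-vertex example (a single imaginary simple root, $\bold k=(3)$, so no real-root hypotheses are even in play) shows the version displayed in \eqref{borcherdseq} cannot be literally correct: there $\widetilde{\pi}^G_{\bold k}(q)=\binom{q+2}{3}$, while $\sum_{\bold J}\prod_{J}\binom{q\,\mult\beta(J)}{D(J,\bold J)}=\binom{q}{3}$. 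The multiset form is also the one consistent with the paper's own Theorem \ref{mainthm}, whose coefficients $\frac{1}{m_i!}\big(\chrmult\bold m_i\big)^{m_i}$ are multiset-type counts. So you should read \eqref{borcherdseq} as a loose transcription of \cite[Theorem 1]{akv}: the precise identity carries the signs $(-1)^{\htt\bold k-|\bold J|}$ when written with plain binomials at $+q$ (equivalently, multiset binomials when written for $\widetilde{\pi}$), and your argument, with the sign bookkeeping corrected as above, proves exactly that.
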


	In Theorem \ref{mainthm} we have proved that, if $\mathcal{L}(G)$ is the free partially commutative Lie algebra  associated to the graph $G$, then
	\begin{equation}\label{heapexp1}
	\widetilde{\pi}^G_{\bold k}(q) \,=\,\sum\limits_{\bold J \in L_G(\bold k)} \chrmult \bold J \,\,q^{|\bold J|}, 
	\end{equation}
If we assume that the Borcherds algebra $\lie g$ has only imaginary simple roots, then its positive part $\lie n^+$ is isomorphic to the free partially commutative Lie algebra associated to $G$. Hence, Theorem \ref{borcherds} gives a connection between the $\bold k$-chromatic polynomials and free partially commutative Lie algebras.
In practice, Equation \eqref{borcherdseq} and \eqref{heapexp1} are equivalent. But in Equation \eqref{heapexp1},  the coefficients are given in terms of linear coefficients of $\bold m$-chromatic polynomials of smaller height and also given  in the basis $1, q, q^2, \dots, q^n, \dots$ whereas Equation \eqref{borcherdseq} is in the binomial basis of the polynomial algebra $\mathbb{C}[q]$. This is advantageous from the computational perspective. Also, Equation \eqref{heapexp1} looks more natural extension of \eqref{kaceq} over \eqref{borcherdseq}.

\section{ Proof of the corollaries}\label{acyclicsec}

\subsection{An expression for chromatic polynomials in terms of heaps}\label{bondheaps}
Let $G$ be a simple graph with the totally ordered vertex set $I$ and edge set $E(G)$. In this subsection, we prove Corollary \ref{chmhs} by studying the relation between bond lattice $L_{G}(\bold k)$ and the Hilbert series of heaps of weight $\bold k$ over $G$. We define the Hilbert series, associated with the Lyndon length statistic, of the set of heaps of weight $\bold k$ over $G$ as follows.
$$H_{\bold k}^G(q) = \sum\limits_{k \ge 1} | \{E \in \mathcal{H}_{\bold k}(I,\zeta) : ll(E) = k \}| q^k$$ where $ll(E)$ denotes the Lyndon length of $E$.
Given this, we claim that 	
$$\widetilde{\pi}_\bold 1^G(q) = \sum\limits_{k \ge 1} | \{E \in \mathcal{H}_{\bold 1}(I,\zeta) : ll(E) = k \}|  q^k = \sum\limits_{E \in \mathcal{H}_{\bold 1}(I,\zeta)} q^{ll(E)}.$$

Theorem \ref{mainthm} implies that $	\widetilde{\pi}_\bold 1^G(q) = \sum\limits_{\bold J \in L_G(\bold 1)} \mult \bold J\, q^{|\bold J|}$. Using this, the following proposition proves this claim. 

\begin{prop}\label{bondheap}
	With the notations as above, we have
	\begin{equation}
	H_{\bold k}^G(q) = \sum\limits_{\bold J \in L_G(\bold k)} \mult \,  \bold J \,\,q^{|\bold J|}
	\end{equation}
\end{prop}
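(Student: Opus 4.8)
The plan is to prove the identity by feeding the unique Lyndon factorization of heaps (\propref{lyndonfact}) into the defining sum for $H_{\bold k}^G(q)$ and then reorganizing the resulting product over Lyndon heaps into a sum over the bond lattice $L_G(\bold k)$ by means of \thmref{lalonde}. I would work first at the level of the full weighted generating series $\sum_{E \in \mathcal{H}(I,\zeta)} q^{ll(E)}\, v(E)$, recovering a fixed weight $\bold k$ only at the end by extracting the coefficient of $e^{-\beta(\bold k)}$ (so that the two gradings, by Lyndon length and by weight, are tracked simultaneously). By \propref{lyndonfact} every heap has a unique factorization $E = L_1 \circ \cdots \circ L_m$ with each $L_i$ Lyndon and $L_1 \ge \cdots \ge L_m$, and $ll(E) = m$; since a non-increasing word in the $L_i$ is the same datum as the multiset of its factors, summing over heaps is the same as summing over multisets of Lyndon heaps. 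This yields the product formula
\begin{equation*}
\sum_{E \in \mathcal{H}(I,\zeta)} q^{ll(E)}\, v(E) \;=\; \prod_{L \in \mathcal{LH}(I,\zeta)} \frac{1}{1 - q\, v(L)},
\end{equation*}
each Lyndon heap $L$ contributing the geometric factor $\sum_{b \ge 0}(q\,v(L))^b$ coming from its possible multiplicities in the multiset. This product formula is the heap-theoretic core of the argument and plays the same structural role here that the logarithmic lemma (\lemref{loglem}) plays in the proof of \thmref{mainthm}.

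Next I would pass from individual Lyndon heaps to their weights. A Lyndon heap is an admissible pyramid, so its position set is connected and its weight $\bold m$ is a connected element of $\mathbb{Z}_+^n$; by \thmref{lalonde} the number of Lyndon heaps of weight $\bold m$ is exactly $\dim \mathcal{L}_{\bold m}(G) = \mult \bold m$, and every such $L$ has $v(L) = e^{-\beta(\bold m)}$. Grouping the factors of the product above by the common weight of the Lyndon heaps therefore rewrites the series purely in terms of the integers $\mult \bold m$ indexed by connected weights $\bold m$. Extracting the coefficient of $e^{-\beta(\bold k)}$ then amounts to choosing a multiset of Lyndon heaps whose weights sum to $\bold k$; recording only the multiset of weights $(\bold m_1, \dots, \bold m_k)$ of the chosen factors produces precisely a part-multiset $\bold J = (J_1, \dots, J_k) \in L_G(\bold k)$ with $\wt J_i = \bold m_i$ and $|\bold J| = k = ll(E)$. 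For each fixed $\bold J$ the number of ways to realize it by genuine Lyndon heaps --- one of weight $\bold m_i$ for each part --- is $\prod_{i=1}^k \dim \mathcal{L}_{\bold m_i}(G) = \mult \bold J$. Collecting the contributions by $\bold J$ gives $\sum_{\bold J \in L_G(\bold k)} \mult \bold J\, q^{|\bold J|}$, which is the assertion; composing this with \thmref{mainthm} in the case $\bold k = \bold 1$ then immediately yields \corref{chmhs}.

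The step I expect to be the main obstacle is the bookkeeping in this last reorganization when several parts of $\bold J$ share a common weight $\bold m$: there the passage between the ordered non-increasing Lyndon factorization and the unordered choice of Lyndon heaps of weight $\bold m$ must be matched carefully against the multiplicity $\mult \bold J$, since a multiset of several Lyndon heaps of the same weight is not simply counted by a plain product. For weight $\bold 1$ --- the case actually needed for \corref{chmhs} --- this difficulty disappears: every connected weight can occur in at most one part of $\bold J$, so the correspondence between multisets of Lyndon heaps and bond-lattice elements is a clean weight-preserving bijection and the factor $\mult \bold J = \prod_i \dim \mathcal{L}_{\bold m_i}(G)$ falls out of \thmref{lalonde} directly. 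I would accordingly establish the $\bold 1$ case first as a transparent bijection, and then isolate the combinatorial multiplicity accounting required to handle repeated weights in the general statement.
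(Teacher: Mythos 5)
Your overall route is the same as the paper's: identify each heap with the multiset of factors in its unique Lyndon factorization (Proposition \ref{lyndonfact}) and then count the factors of each weight via Theorem \ref{lalonde}; your product formula $\prod_{L\in\mathcal{LH}(I,\zeta)}(1-q\,v(L))^{-1}$ is just this bijection packaged as a generating-function identity, where the paper works directly at fixed weight $\bold k$. The substantive point is that the ``obstacle'' you flag at the end is not mere bookkeeping: it is precisely the step at which the paper's own proof goes wrong, and in fact the statement itself is false for general $\bold k$ with the paper's definition $\mult\bold J=\prod_{i=1}^{r}(\mult\bold m_i)^{m_i}$. A heap whose Lyndon factors have weights prescribed by $\bold J$ is the same datum as a \emph{multiset} of Lyndon heaps with those weights, so if the distinct part-weights $\bold m_1,\dots,\bold m_r$ occur $m_1,\dots,m_r$ times, the number of such heaps is
\begin{equation*}
\prod_{i=1}^{r}\binom{\mult \bold m_i+m_i-1}{m_i},
\end{equation*}
not the plain product of powers. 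The two agree exactly when no repeated part-weight has multiplicity $\ge 2$. For a concrete failure, take $G=K_3$ and $\bold k=(2,2,2)$: heaps are words on $\{1,2,3\}$, $\mult(1,1,1)=2$ (Lyndon words $123$ and $132$), and the bond element $\bold J=(\{1,2,3\},\{1,2,3\})$ is realized by exactly three heaps, namely $123\circ 123$, $132\circ 123$, $132\circ 132$ (the non-increasing products of two Lyndon factors of content $(1,1,1)$), whereas $\mult\bold J=2^2=4$. Since sequences always outnumber multisets, the right-hand side of the proposition strictly overcounts the coefficient of $q^2$ in $H^{G}_{\bold k}(q)$ here; the identity becomes correct for all $\bold k$ only after replacing $(\mult\bold m_i)^{m_i}$ by $\binom{\mult\bold m_i+m_i-1}{m_i}$.

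Your plan to settle $\bold k=\bold 1$ first by a clean bijection is therefore the right call, and for that case your argument is complete and correct: the parts of any $\bold J\in L_G(\bold 1)$ are disjoint subsets of $I$, so no part-weight can repeat, every $m_i=1$, and $\mult\bold J=\prod_i\mult\bold m_i$ genuinely counts the heaps realizing $\bold J$. Since Corollary \ref{chmhs} (and everything downstream in the paper) only uses the $\bold k=\bold 1$ case, your proposal proves what is actually needed and, pushed to its conclusion, corrects the general statement rather than reproducing the paper's gap.
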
 
\begin{pf}
	Let $E \in \mathcal{H}_{\bold k}(I,\zeta)$ and let $E = p_1^{m_1} \circ p_2^{m_2} \circ \cdots \circ p_r^{m_r}$ be its Lyndon factorization, then 
	we have $ \bold J(E) := (\supp_m(p_1),\dots,\supp_m(p_1),\dots,\supp_m(p_r),\dots,\supp_m(p_r)) \in L_G(\bold k)$ where $\supp_m(p_i)$ occurs $m_i$ times and $|\bold J(E)| = ll(E)$ .
	Conversely, let	$\bold J := (J_1,\dots,J_1,J_2,\dots,J_2,\dots,J_r,\dots,J_r) \in L_G(\bold k)$ where each $J_i$ occurs $m_i$ times. Assume that the part $J_i$ has weight $\bold {m}_i$, then by Theorem \ref{lalonde} there are $\mult \bold {m}_i$ many Lyndon heaps of weight $\bold {m}_i$. 
	Consider $E = p_1^{m_1} \circ p_2^{m_2} \circ \cdots \circ p_r^{m_r} \in \mathcal{H}_{\bold k}(I,\zeta)$ where each $p_i \in \mathcal{LH}_{\bold {m}_i}(I,\zeta)$.
	We have, the minimum elements of entries of $\bold J$ are non-increasing and hence the given factorization of $E$ as the product of Lyndon heaps is indeed the Lyndon factorization. By the uniqueness of Lyndon factorization of heaps, there are $\mult \bold J = \prod\limits_{i=1}^r (\mult \bold {m}_i)^{m_i}$ many such $E$ and all these heaps have Lyndon length $|\bold J|$. This proves the result.
\end{pf}

\subsection{An expression for chromatic polynomials in terms of acyclic orientations}
In this subsection, we prove Corollary \ref{acycliccor} by studying the relation between acyclic orientations and the heaps of weight $\bold 1$ over $G$.
Let $\mathcal{O}(G)$ be the set of all acyclic orientations of $G$. A \emph{source} is a vertex with only outgoing edges and a \emph{sink} has only incoming edges.  For fixed vertex $i$, the set of all acyclic orientations in which $i$ is the only source is denoted by $\mathcal{O}^{i}(G)$. We start with the following simple lemma.
\begin{lem}\label{bijection}
	With the notations as above, we have
	$$ |\mathcal{H}_{\bold 1}(I,\zeta)| = |\mathcal{O}(G)|.$$
\end{lem}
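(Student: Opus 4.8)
The plan is to construct an explicit bijection between multilinear heaps $\mathcal{H}_{\bold 1}(I,\zeta)$ and acyclic orientations $\mathcal{O}(G)$, and the natural candidate is the map sending a multilinear heap $E$ to the orientation induced by its partial order $\le_E$. Recall that in a multilinear heap every basic piece occupies a distinct position, so $\pi(E) = I$ and the pieces are in bijection with the vertices; the partial order $\le_E$ is defined as the transitive closure of the covering relations $(\alpha_1,i) \le_E (\alpha_2,j)$ whenever $\alpha_1 \zeta \alpha_2$ and $i < j$. The key observation is that two pieces are comparable in $\le_E$ via a covering relation exactly when their positions are adjacent in $G$ (i.e.\ $\alpha_1 \zeta \alpha_2$). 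Thus for every edge $\{\alpha,\beta\} \in E(G)$, the heap structure forces the two pieces at $\alpha$ and $\beta$ to be comparable, and we orient the edge $\alpha \to \beta$ if the piece at $\alpha$ lies below the piece at $\beta$.

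First I would check that this assignment $\Phi : E \mapsto \mathcal{O}_E$ indeed lands in $\mathcal{O}(G)$: since $\le_E$ is a genuine partial order (a transitive, antisymmetric relation), any directed cycle in $\mathcal{O}_E$ would produce a cycle in $\le_E$, which is impossible; hence $\mathcal{O}_E$ is acyclic. Conversely, given an acyclic orientation $\mathcal{O}$, I would build the inverse map: an acyclic orientation is precisely a partial order on the vertices once we take the transitive closure, and this partial order can be realized as a heap by assigning to each vertex a level equal to the length of the longest directed path ending at that vertex (equivalently, by reading off the canonical heap representative of the isomorphism class determined by the poset). Because $G$ is simple and each position carries exactly one piece, this produces a well-defined multilinear heap, giving a map $\Psi : \mathcal{O}(G) \to \mathcal{H}_{\bold 1}(I,\zeta)$.

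The verification that $\Phi$ and $\Psi$ are mutually inverse is where the argument must be carried out carefully, and I expect this to be the main obstacle. The subtlety is that distinct heaps must give distinct orientations and vice versa: I would use the fact (recorded in Section~\ref{basic}) that every isomorphism class of pre-heaps contains a unique heap, the one minimizing $\sum_{\alpha} h(\alpha)$, together with the characterization $\psi^{-1}(E)$ as the set of linear extensions of $\le_E$. Two multilinear heaps with the same comparability data on adjacent positions have the same partial order $\le_E$ (since only adjacent positions ever generate covering relations), hence the same canonical representative, so $\Phi$ is injective; and the longest-path level function shows $\Phi \circ \Psi = \id$, so $\Phi$ is surjective. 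The only genuinely delicate point is confirming that the orientation recovers \emph{all} and \emph{only} the comparabilities of $\le_E$ — that no spurious comparability between non-adjacent vertices is introduced and none between adjacent vertices is lost — which follows from the defining covering relation of $\le_E$ using $\zeta$.

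Having established that $\Phi$ is a bijection, the cardinality equality $|\mathcal{H}_{\bold 1}(I,\zeta)| = |\mathcal{O}(G)|$ is immediate. I would keep the write-up short, emphasizing the correspondence between the heap partial order and the poset underlying an acyclic orientation, since the bijection is conceptually transparent once the role of the concurrency relation $\zeta$ is pinned down.
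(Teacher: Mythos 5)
Your proposal is correct and is essentially the paper's own argument: the paper defines the same map (orienting each edge from the lower piece to the higher one, which for adjacent positions coincides with your $\le_E$-based orientation) and asserts acyclicity and reversibility. Your write-up merely supplies the details the paper leaves implicit, namely the longest-path level function realizing the inverse and the uniqueness of the canonical heap in its isomorphism class.
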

\begin{pf}
	Let $E$ be an element of $\mathcal{H}_{\bold 1}(I,\zeta)$, we associate an acyclic orientation (of $G$) to $E$ as follows: Let $e = \{\alpha_i,\alpha_j\}$ be an arbitrary edge in $G$, then the pieces $\alpha_i$ and $\alpha_j$ are in different levels in $E$. Without loss of generality, we assume that  $\alpha_j$ stays above $\alpha_i$ in $E$, i.e., $\htt(\alpha_i) < \htt (\alpha_j)$ in $E$. In this case, orient the edge $e$ from $\alpha_i$ to $\alpha_j$ and call the resulting orientation $\mathcal{O}_E$. Since $E$ is multilinear, we get  $\mathcal{O}_E$ is acyclic. This defines a map $\mathcal{T}$ from $\mathcal{H}_{\bold 1}(I,\zeta)$ into $\mathcal{O}(G)$ and this assigning process is reversible. This shows that $\mathcal{T}$ is a bijection.
\end{pf}
An acyclic orientation $\mathcal{O}$ of $G$ is said to be Lyndon if it has a unique source at the minimum element of $I$. Let $(I_1,I_2) \in L_G(\bold 1)$ be a partition with two parts, then the subgraphs $G_1$ and $G_2$ of $G$ induced by $I_1$ and $I_2$ resp. are connected. Let $\mathcal{O}_1 \in \mathcal{O}(G_1)$ and $\mathcal{O}_2 \in \mathcal{O}(G_2)$ be acyclic orientations which are Lyndon, i.e., $\mathcal{O}_1$ and $\mathcal{O}_2$ are acyclic orientations with the unique source at the minimum elements $i$ and $j$  of $I_1$ and $I_2$ respectively. We say $\mathcal{O}_1 \ge \mathcal{O}_2$ if $i \ge j$. 
		Given two such acyclic orientations, we define their composition $\mathcal{O}_1 \circ \mathcal{O}_2$ as follows:  All the edges of $G$ which are in $G_1$ or $G_2$ will be given the orientation $\mathcal{O}_1$ or $\mathcal{O}_2$ accordingly. The edges that are straddled between $G_1$ and $G_1$ will get the orientation against the order of the minimum element of $I_1$ and $I_2$ where  we say that an edge $e$ {\em straddles} $G_1$ and $G_2$ if one end of $e$ is in $G_1$ and the other in $G_2$.  Note that $\mathcal{O}$ is indeed an acyclic orientation of $G$. 
		If $\mathcal{O} \in \mathcal{O}(G)$ then the vertex set $I$ of $G$  is said to be the support of $\mathcal{O}$. Let $(\mathcal{O}_1,\dots,\mathcal{O}_k)$ be an ordered $k$-tuple of acyclic orientations of some subgraph of $G$. Assume that each $\mathcal{O}_i$ is Lyndon and their supports form a partition in $L_G(\bold 1)$, then it is clear that the definition of composition of two acyclic orientations can be extended to $(\mathcal{O}_1,\dots,\mathcal{O}_k)$ and the resulting orientation of $G$ will be acyclic. Given this, we have the following result.
	\begin{prop} \label{acyclicfact}
	Let $\mathcal{O}$ be an acyclic orientation in $\mathcal{O}(G)$, then $\mathcal{O}$ factorizes uniquely as $\mathcal{O} = \mathcal{O}_1 \circ \mathcal{O}_2 \circ \cdots \circ \mathcal{O}_k$ with $k \ge 1$, $\mathcal{O}_i \in \mathcal{O}(G_i)$ (for some subgraph $G_i$ of $G$)
	are Lyndon and $\mathcal{O}_1 \ge \mathcal{O}_2 \ge \dots \ge \mathcal{O}_k.$ We call this  factorization Lyndon. We define the number of factors occurring in such factorization to be the Lyndon length of $\mathcal{O}$ and is denoted by $ll(\mathcal{O})$.
\end{prop}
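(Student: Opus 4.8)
The plan is to transport the Lyndon factorization of multilinear heaps (Proposition \ref{lyndonfact} together with Lemma \ref{iff}) across the bijection $\mathcal{T}\colon \mathcal{H}_{\bold 1}(I,\zeta)\to\mathcal{O}(G)$ of Lemma \ref{bijection}. Given $\mathcal{O}\in\mathcal{O}(G)$, I would set $E=\mathcal{T}^{-1}(\mathcal{O})\in\mathcal{H}_{\bold 1}(I,\zeta)$ and write its Lyndon factorization $E=L_1\circ L_2\circ\cdots\circ L_k$, where by Lemma \ref{iff} each $L_i$ is an admissible multilinear pyramid supported on a connected set $I_i\subseteq I$, the $I_i$ partition $I$, and $L_1\ge L_2\ge\cdots\ge L_k$. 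Applying $\mathcal{T}$ to each factor should then yield exactly the asserted factorization of $\mathcal{O}$. For this to work I must upgrade $\mathcal{T}$ from a set bijection to an isomorphism intertwining superposition with composition, Lyndon heaps with Lyndon orientations, and the heap total order with the order $\ge$ on orientations; the proposition then follows formally from the uniqueness in Proposition \ref{lyndonfact}.

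The first verification is that $\mathcal{T}$ turns the superposition of the factors into the composition of orientations, i.e.\ $\mathcal{T}(L_1\circ\cdots\circ L_k)=\mathcal{T}(L_1)\circ\cdots\circ\mathcal{T}(L_k)$. I would argue edge by edge on $G$. An edge internal to some $\supp L_a$ joins two pieces of $L_a$, and their relative heights in the full heap agree with their relative heights inside $L_a$ (superposition embeds $L_a$ as a subheap and preserves its internal order, and concurrent pieces are always comparable), so the edge receives the same orientation in $\mathcal{T}(L_1\circ\cdots\circ L_k)$ as in $\mathcal{T}(L_a)$. An edge straddling $\supp L_a$ and $\supp L_b$ with $a<b$ joins a piece of $L_a$, which lies below, to a piece of $L_b$, which lies above because $L_b$ is superposed after $L_a$; hence it is oriented from its $L_a$-endpoint to its $L_b$-endpoint. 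Since $a<b$ forces $\min\supp L_a\ge\min\supp L_b$, this is precisely the orientation prescribed by the composition rule (straddling edges run from the part with the larger minimal vertex to the part with the smaller one), so the two orientations coincide.

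Next I would match the distinguished classes and the orders. A vertex $v$ is a source of $\mathcal{T}(E)$ iff every neighbour of $v$ lies above $v$ in $E$, which by the definition of $\le_E$ is exactly the condition that $v$ be a minimal piece; thus sources of $\mathcal{T}(E)$ correspond bijectively to minimal pieces of $E$. In particular $\mathcal{T}$ carries a pyramid (unique minimal piece) to an orientation with a unique source, and an admissible pyramid (basis at $\min\supp$) to a Lyndon orientation (unique source at the minimum vertex of its support); with Lemma \ref{iff} this identifies Lyndon multilinear heaps with Lyndon acyclic orientations. For the order, observe that because a pyramid has a single minimal piece, every linear extension of $\le_{L}$---in particular the standard word $\st(L)$---begins with the basis, i.e.\ with the letter $\min\supp L$. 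Since the factors $L_i$ have pairwise disjoint supports, their minima are distinct vertices of $I$, so the lexicographic comparison of standard words is decided at the first letter and gives $L_i\ge L_j\iff \min\supp L_i\ge\min\supp L_j$, which is exactly the defining inequality for $\mathcal{T}(L_i)\ge\mathcal{T}(L_j)$.

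Granting these three compatibilities, existence follows by applying $\mathcal{T}$ to $E=L_1\circ\cdots\circ L_k$: each $\mathcal{O}_i:=\mathcal{T}(L_i)$ is Lyndon, $\mathcal{O}_1\ge\cdots\ge\mathcal{O}_k$, and $\mathcal{O}=\mathcal{O}_1\circ\cdots\circ\mathcal{O}_k$. For uniqueness, any factorization $\mathcal{O}=\mathcal{O}_1\circ\cdots\circ\mathcal{O}_k$ into decreasing Lyndon orientations pulls back under $\mathcal{T}^{-1}$ to a factorization of $E$ into decreasing Lyndon heaps, which is unique by Proposition \ref{lyndonfact}; hence the $\mathcal{O}_i$ are determined. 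I expect the main obstacle to be precisely the three compatibility checks of the two middle paragraphs---confirming that $\mathcal{T}$ is a genuine isomorphism of ordered monoids rather than a mere bijection of sets---and, within them, pinning down the orientation of the straddling edges so that it simultaneously agrees with the superposition of heaps and with the stated composition rule for orientations.
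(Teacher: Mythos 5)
Your proposal follows essentially the same route as the paper's proof: pull $\mathcal{O}$ back to $E=\mathcal{T}^{-1}(\mathcal{O})$ via Lemma \ref{bijection}, take the Lyndon factorization of $E$ from Proposition \ref{lyndonfact}, identify Lyndon multilinear heaps with Lyndon orientations via Lemma \ref{iff} (admissible pyramid $\leftrightarrow$ unique source at the minimum of the support), and push the factors forward through $\mathcal{T}$. You are in fact more thorough than the paper, which asserts without detail that $\mathcal{T}$ carries superposition to composition (in particular the orientation of straddling edges) and that the factor order is preserved, and which does not spell out uniqueness; your edge-by-edge check, the first-letter comparison of standard words, and the pull-back argument for uniqueness are precisely the verifications the paper leaves implicit.
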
 
\begin{pf}
		Let $\mathcal{O}$ be an acyclic orientation in $\mathcal{O}(G)$ and let $E = \mathcal{T}^{-1}(\mathcal{O})$ be its inverse image in $\mathcal{H}_{\bold 1}(I,\zeta)$. 
First, we show that if $E$ is Lyndon then $\mathcal{O} = \mathcal{T}(E)$ is Lyndon.  By Lemma \ref{iff}, $E$ is  an admissible pyramid and hence $\mathcal{O}$ has a unique source at the minimum element of its support. Note that $\pi(E) = \supp \mathcal{O}$.  Assume that $E$ is an arbitrary element of $\mathcal{H}_{\bold 1}(I,\zeta)$ with Lyndon factorization $E = E_1 \circ E_2 \circ \cdots \circ E_k$. We will show that $\mathcal{T}(E_1) \circ \mathcal{T}(E_2) \circ \cdots \circ \mathcal{T}(E_k)$ is the Lyndon factorization of $\mathcal{O}$. From the first part of the proof, we have the acyclic orientations $\mathcal{T}(E_i)$ are Lyndon. It is clear that $\text{min}(E_1) > \text{min}(E_2) > \cdots > \text{min}(E_k)$ and hence $\mathcal{T}(E_1) \ge \mathcal{T}(E_1) \ge \dots \ge \mathcal{T}(E_k)$. This shows that $\mathcal{O} = \tau(E_1) \circ \tau(E_2) \circ \cdots \circ \tau(E_k)$ is the required Lyndon factorization. 
\end{pf}

We observe that the above-defined bijection $\mathcal{T}$ preserves the Lyndon length statistic. Now the proof for Corollary \ref{acycliccor} follows from  Corollary \ref{chmhs}. 
\begin{rem}
	In \cite[Equation 7.1]{GZ83}, we have an expression for chromatic polynomial in terms of Whitney number of poset associated with certain hyperplane arrangements. With little effort, using \cite[Theorem 7.4]{GZ83}, one can prove that Equation \eqref{1acyclicexp} and \cite[Equation (7.1)]{GZ83} are the same.
\end{rem}
\subsection{Recursive formula for the dimension of $\mathcal{L}_{\bold k}(G)$}
In this subsection, we prove the following corollary of Theorem \ref{pl} which gives a combinatorial formula for dimensions of grade spaces of free partially commutative Lie algebras. This result has appeared in \cite[Corollary 3.9]{akv}. We give a different proof using heaps of pieces.
\begin{cor}\label{recursionmult}
	With the notations as above, we have
	\begin{equation}\label{mult}
	\text{ mult } \beta(\bold k) = \sum\limits_{\ell | \bold k}\frac{\mu(\ell)}{\ell}\ |\pi^G_{\bold k/\ell}(q)[q]|,\end{equation}
	where $\beta(\bold k) := \sum\limits_{\alpha_i \in I} k_i \alpha_i \in \mathbb{Z}[\alpha_1,\alpha_2,\dots,\alpha_n]$ and $\mu$ is the M\"{o}bius function.
	\begin{pf}		
		We have $N = 1 + \sum\limits_{\substack{E \in \mathcal{TH}(I,\zeta) \\ E \ne 0}}(-1)^{|E|} v(E)$ which can be
		considered as an element of $\mathbb{C}[[e^{-\alpha_i} : i\in I]]$ and let $\beta(\bold k) = \sum\limits_{\alpha_i \in I} k_i \alpha_i \in \mathbb{Z}[\alpha_1,\alpha_2,\dots,\alpha_n]$. Then, the coefficient of $e^{-\beta(\bold k)}$ in $-\log N$ equals
		$$(-1)^{\mathrm{ht}(\eta(\bold k))}\sum\limits_{k\ge 1}\frac{(-1)^k}{k}|P_k(\bold k,G)|$$ which by Equation \eqref{defgenchr} is equal to $|\pi^G_{\bold k}(q)[q]|$.
		Now, using the second fundamental lemma, apply $-\text{log }$ to the left-hand side of the first fundamental lemma we get
		\begin{equation}\label{recursss} 
		|\pi^G_{\bold k}(q)[q]| = \frac{|\mathcal{P}_{\bold k}(I,\zeta)|}{\htt \bold k} = \sum_{\substack{\ell\in \mathbb{N}\\ \ell | \bold k}} \frac{1}{\ell}\text{ mult } \eta\left(\bold k/\ell\right).\end{equation} 
		where the second equality is obtained from Equation \eqref{pleq}. 	
		The statement of the corollary is now an easy consequence of the M\"{o}bius inversion formula.
	\end{pf}
\end{cor}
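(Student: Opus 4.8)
The plan is to extract the coefficient of $e^{-\beta(\bold k)}$ from the single identity $-\log N = \sum_{p \in \mathcal{P}(I,\zeta)} \frac{v(p)}{|p|}$ in two independent ways and then recover $\mult \beta(\bold k)$ by M\"obius inversion. This identity is furnished directly by Viennot's two fundamental lemmas: the Inversion lemma (Lemma~\ref{invlem}) identifies $\sum_{E \in \mathcal{H}(I,\zeta)} v(E)$ with $1/N$, and the Logarithmic lemma (Lemma~\ref{loglem}) expresses the logarithm of this heap generating series as the pyramid generating series $\sum_p v(p)/|p|$, so that applying $-\log$ to both sides of Lemma~\ref{invlem} gives the desired equality.

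First I would read off the coefficient on the pyramid side. Since every pyramid $p$ of weight $\bold k$ has $v(p) = e^{-\beta(\bold k)}$ and $|p| = \htt \bold k$, and no pyramid of any other weight contributes this monomial, the coefficient of $e^{-\beta(\bold k)}$ in $\sum_p v(p)/|p|$ is exactly $|\mathcal{P}_{\bold k}(I,\zeta)|/\htt \bold k$. Then I would rewrite this using the structural results already established: the pyramid proportionality lemma \eqref{ppleq} together with the pyramid and Lyndon heap lemma \eqref{pleq} expresses $|\mathcal{P}_{\bold k}(I,\zeta)|$ as $\htt(\bold k)\sum_{\ell \mid \bold k}\frac{1}{\ell}|\mathcal{LH}_{\bold k/\ell}(I,\zeta)|$, and Lalonde's theorem (\thmref{lalonde}) identifies $|\mathcal{LH}_{\bold m}(I,\zeta)|$ with $\dim \mathcal{L}_{\bold m}(G) = \mult \beta(\bold m)$. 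Hence the pyramid side contributes $\sum_{\ell \mid \bold k}\frac{1}{\ell}\mult \beta(\bold k/\ell)$.

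Next I would compute the same coefficient from $-\log N$ directly. Expanding $-\log N = \sum_{k \ge 1}\frac{(-1)^k}{k}(N-1)^k$ with $N - 1 = \sum_{E \neq 0}(-1)^{|E|}v(E)$ running over nonempty trivial heaps (equivalently nonempty independent subsets, weighted by the associated multiset), the coefficient of $e^{-\beta(\bold k)}$ becomes $(-1)^{\htt \bold k}\sum_{k \ge 1}\frac{(-1)^k}{k}|P_k(\bold k,G)|$ by the very definition of $P_k(\bold k,G)$. Comparing this with the independent-set expansion \eqref{defgenchr} of $\pi^G_{\bold k}(q)$ and using that the coefficient of $q$ in $\binom{q}{k}$ equals $(-1)^{k-1}/k$, one sees that this quantity is precisely $|\pi^G_{\bold k}(q)[q]|$. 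Equating the two evaluations yields the intermediate identity \eqref{recursss}, namely $|\pi^G_{\bold k}(q)[q]| = \sum_{\ell \mid \bold k}\frac{1}{\ell}\mult \beta(\bold k/\ell)$.

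Finally I would invert this divisor sum. Because $\ell \mid \bold k$ means $\ell$ ranges over the positive divisors of $d := \gcd(k_1,\dots,k_n)$, I would fix the primitive vector $\bold k' := \bold k/d$ and treat both sides as functions $F(d) := |\pi^G_{d\bold k'}(q)[q]|$ and $\Gamma(d) := \mult \beta(d\bold k')$ of the single integer $d$; the intermediate identity then reads $d\,F(d) = \sum_{m \mid d} m\,\Gamma(m)$, to which the classical M\"obius inversion formula applies and gives $\Gamma(d) = \sum_{\ell \mid d}\frac{\mu(\ell)}{\ell}F(d/\ell)$, which is the assertion after reindexing. I expect the main obstacle to be the bookkeeping in the second coefficient extraction: matching the logarithmic expansion of $N$ against the alternating independent-set count $P_k(\bold k,G)$ and tracking the signs $(-1)^{\htt \bold k}$ and $(-1)^{k-1}$ so that the absolute linear coefficient of the chromatic polynomial emerges cleanly. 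Once that divisor sum is in hand, the reduction to the integer parameter $d$ and the M\"obius inversion are entirely routine.
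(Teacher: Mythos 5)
Your proposal is correct and follows essentially the same route as the paper: both extract the coefficient of $e^{-\beta(\bold k)}$ from $-\log N$ in two ways via Viennot's two fundamental lemmas, identify the pyramid-side coefficient with $\sum_{\ell \mid \bold k}\frac{1}{\ell}\mult\beta(\bold k/\ell)$ through the proportionality lemma \eqref{ppleq}, the pyramid--Lyndon heap lemma \eqref{pleq} and Lalonde's theorem, match the log-side coefficient with $|\pi^G_{\bold k}(q)[q]|$ via \eqref{defgenchr}, and finish by M\"obius inversion. Your write-up merely makes explicit two points the paper leaves implicit: the linear coefficient $(-1)^{k-1}/k$ of $\binom{q}{k}$, and the reduction of the divisor sum to the single integer parameter $d=\gcd(k_1,\dots,k_n)$ before inverting.
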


\section{An application - Chromatic reciprocity theorem}\label{recib}
In this section, we will prove Theorem \ref{(m,l)} by constructing combinatorial models which are counted by the derivatives of chromatic polynomials evaluated at the negative integers. Since $\chi_G^{(m)}$ is a convenient notation over $\pi_{\bold 1}^{G,(m)}$, we will denote the chromatic polynomial and its $m$-th derivative by $\chi_G(q)$ and $\chi_G^{(m)}(q)$ respectively. The coefficients of $\chi_G(q)$ are alternate in sign, this implies that the coefficients of $\chi_G^{(m)}(q)$ are also alternate in sign. We define a positive variant of $\chi_G^{(m)}(q)$  as follows: $$\widetilde{\chi}_G^{(m)}(q) = (-1)^{n-m}\,\chi_G^{(m)}(-q).$$
From Corollary \ref{heapexp} we have, 
\begin{equation}
	\widetilde{\chi}_G(q) = \sum\limits_{k \ge 1}^n \sum_{\substack{E \in \mathcal{H}_{\bold 1}(I,\zeta) \\ ll(E) = k}}  q^{k} = \sum\limits_{E \in \mathcal{H}_{\bold 1}(I,\zeta)} q^{ll(E)}.
\end{equation}
This implies that  for $0 \le m \le n$,
\begin{equation}\label{der}
	\widetilde{\chi}_G^{(m)}(q) = \sum\limits_{k \ge m}^n \sum_{\substack{E \in \mathcal{H}_{\bold 1}(I,\zeta) \\ ll(E) = k}} k (k-1) \cdots (k-m+1)  \, q^{k-m}.
\end{equation}

\begin{defn}
	Consider the set $\mathcal{LH}_{\bold 1,*}(I,\zeta)$ consists of Lyndon heaps in which each basic piece occurs at most once. Let $E$ be an element from this set and let $m$ be a positive integer.  An $m$-labelling of $E$ is defined as follows. Fix $i \in [m]$, define $f_i^E : \pi(E) \rightarrow [m]$ by $f(\alpha) = i$ for all $\alpha \in \pi(E)$ where $[m] = \{1,2,\dots,m\}$. We have $m$ many such labellings for each $E \in \mathcal{LH}_{\bold 1,*}(I,\zeta)$.  
\end{defn}

	Fix a positive integer $\lambda$. Let $E = E_1 \circ E_2 \circ \cdots \circ E_k$ be the Lyndon factorization of $E \in \mathcal{H}_{\bold 1}(I,\zeta)$. Consider the Lyndon factors $E_{1},E_{2},\dots,E_{m}$. We fix a $(k-j+1)$-labelling in $E_{j}$ for $1 \le j \le m$. 
	Let $E_{m+1},E_{m+2},\dots,E_{k}$ be the remaining $k-m$ Lyndon factors of $E$. We fix a $\lambda$-labelling on each of these $k-m$ Lyndon factors.
	An $(m,\lambda)$-labelling of $E$ is a function $f : I \rightarrow \mathbb{N}$ which is defined as follows. Let $\alpha \in I$ then $\alpha \in \pi(E_j)$ for exactly one $j \in [k]$, we define  $f(\alpha) = f_i^{E_j}(\alpha)$ where $f_i^{E_j}$ is the above-fixed labelling of $E_j$. This shows that, the total number of $(m,\lambda)$-labelling of $E$ is equal to $k (k-1) \cdots (k-m+1)  \, \lambda^{k-m}$. Hence the total number of $(m,\lambda)$-labelled multilinear heaps is equal to $$\sum\limits_{k \ge m}^n \sum_{\substack{E \in \mathcal{H}_{\bold 1}(I,\zeta) \\ ll(E) = k}} k (k-1) \cdots (k-m+1)  \, \lambda^{k-m}$$
	This shows that the number of $(m,\lambda)$-labelled multilinear heaps is equal to $\chi_G^{(m)}(q)$.  Lemma \ref{bijection} and Proposition \ref{acyclicfact} implies that the notion $(m,\lambda)$-labelling can be extended to the set of all acyclic orientations of $G$ and the proof of Theorem \ref{(m,l)} follows.

\begin{rem}
	
		We note that if $\mathcal{O} \in \mathcal{O}(G)$ has Lyndon factorization $\mathcal{O} = \mathcal{O}_1 \circ \cdots \circ \mathcal{O}_k$ then any edge $\{i,j\} \in E(G)$ such that $i$ in the support of $\mathcal{O}_1$ and $j$ in support of $\mathcal{O}_2$ has orientation $j$ to $i$. 
	From our definition of $\lambda$-labelling on acyclic orientations, we see that the vertices in the support of $\mathcal{O}_1$ and the vertices in the support of $\mathcal{O}_2$ receive arbitrary $m,n \in [\lambda]$ as its $\lambda$-labelling. Since there is an arrow from $j$ to $i$, we see that $\lambda$-labelled acyclic orientations are different from $\lambda$-compatible pairs.
\end{rem}

\bibliographystyle{plain}
\bibliography{Arunkumar}

\end{document}